\documentclass[svgnames]{amsart}
\usepackage[english]{babel}
\usepackage[utf8]{inputenc}

\usepackage{svg}
\usepackage{hyperref}
\usepackage{enumitem}
\hypersetup{colorlinks,citecolor=Black,linkcolor=Black,urlcolor=Blue}
\usepackage{amssymb, amsfonts, amsmath, amsthm, amscd, amsxtra, latexsym, mathabx,mathtools}

\usepackage{tikz-cd}

\theoremstyle{plain}
\newtheorem{thm}{Theorem}[section] 
\newtheorem{lemma}[thm]{Lemma}
\newtheorem{cor}[thm]{Corollary}
\newtheorem{prop}[thm]{Proposition}
\newtheorem{claim}[thm]{Claim}

\newtheorem{thmintro}{Theorem}

\newtheorem{conjintro}[thmintro]{Conjecture}
\newtheorem{corintro}[thmintro]{Corollary}

\theoremstyle{definition}
\newtheorem{defn}[thm]{Definition}
\newtheorem{rem}[thm]{Remark}
\newtheorem{notation}[thm]{Notation}

\newtheorem{question}[thmintro]{Question}
\newtheorem{ex}[thm]{Example}

\newenvironment{claimproof}{\begin{proof}}{\end{proof}}


\DeclarePairedDelimiter{\abs}{\lvert}{\rvert}

\newcommand{\aut}[1]{\operatorname{Aut}(#1)}
\newcommand{\out}[1]{\operatorname{Out}(#1)}

\newcommand{\Z}{\mathbb{Z}}

\newcommand{\BC}{\mathcal{BC}}

\newcommand{\centre}[2]{\operatorname{C}_{#1}(#2)}
\newcommand{\crush}[2]{\mathcal{#1}^{#2}}

\newcommand{\edge}[1]{\operatorname{E}(#1)}
\renewcommand{\epsilon}{\varepsilon}

\DeclareMathOperator*{\free}{\ast}

\DeclarePairedDelimiter{\nspan}{\langle\mkern-5mu\langle}{\rangle\mkern-5mu\rangle}

\newcommand{\Stab}[2]{\operatorname{Stab}_{#1}\left(#2\right)}
\newcommand{\dist}{\operatorname{d}}

\renewcommand{\phi}{\varphi}

\newcommand{\ribb}[3]{\operatorname{Ribb}_{#1}(#2,#3)}

\newcommand{\ver}[1]{\operatorname{V}(#1)}

\title[A combination theorem for the twist conjecture for Artin groups]{A combination theorem for the twist conjecture for Artin groups}

\author[O. Jones]{Oli Jones}
    \address{(Oli Jones) Institute of Mathematics\\ Technische Universität Berlin\\ Berlin, Germany}
    \email{jones@math.tu-berlin.de} 

\author[G. Mangioni]{Giorgio Mangioni}
    \address{(Giorgio Mangioni) Maxwell Institute and Department of Mathematics\\ Heriot-Watt University\\ Edinburgh, UK\\ Orcid: 0000-0003-2868-5032}
    \email{gm2070@hw.ac.uk}

\author[G. Sartori]{Giovanni Sartori}
    \address{(Giovanni Sartori) Maxwell Institute and Department of Mathematics\\ Heriot-Watt University\\ Edinburgh, UK}
    \email{gs2057@hw.ac.uk}

\begin{document}

\begin{abstract}
    We reduce a strong version of the twist conjecture for Artin groups to Artin groups whose defining graphs have no separating vertices. This produces new examples of Artin groups satisfying the conjecture, and sheds more light on the isomorphism problem for Artin groups. Along the way we also prove a combination result for the ribbon property for vertices.
\end{abstract}

\maketitle

\small

\noindent 2020 \textit{Mathematics subject classification.} 20F36, 20E08, 20F65

\noindent \textit{Key words.} Artin groups, Bass--Serre theory, isomorphism problem, twist conjecture, ribbons.

\normalsize

\section*{Introduction}
Artin groups are a rich class of groups generalising braid groups and with strong connections to Coxeter groups. They are defined via the following presentation: given a finite simplicial graph $\Gamma$ with vertices $\ver\Gamma$, edges $\edge\Gamma$, and for each edge $\{a,b\} \in \edge{\Gamma}$ a label $m_{ab} \in \mathbb{N}_{\geq2}$, the associated \emph{Artin group} $A_\Gamma$ is 
$$\langle \ver{\Gamma} \ | \ \mathrm{prod}(a,b,m_{ab})=\mathrm{prod}(b,a,m_{ab}) \, \forall \{a,b\}\in \edge{\Gamma}\rangle,$$
where $\mathrm{prod}(u,v,n)$ denotes the prefix of length $n$ of the infinite alternating word $uvuvuv\cdots$. When $a$ and $b$ do not span an edge, we abuse notation and set $m_{ab}=\infty$. 

One of the most prominent open questions for Artin groups is the \emph{isomorphism problem}, which specialises Dehn's isomorphism problem~\cite{Dehn_isoprob}, and asks for an algorithm that, given two labelled graphs, determines if they give rise to isomorphic Artin groups. The question is completely open in general, with some results when one or both graphs are restricted to some subclasses {\cite{baudisch1981raagcharacterise,droms1987isomorphisms,Paris_isoprob_spherical,vaskou_isoproblem}}. In this paper we are concerned with the twist conjecture, which can be seen as a first step towards the isomorphism problem. Before giving more details, we need some terminology.

An \emph{Artin system} is a pair $(A,S)$ where $A$ is a group isomorphic to some $A_\Gamma$, and the isomorphism maps $S\subseteq A$ bijectively onto the vertices of $\Gamma$ (see Definition~\ref{defn:artin}). The set $S$ is called an \emph{Artin generating set} for $A$, and the graph $\Gamma$, which we also denote by $\Gamma_S$, is the corresponding \emph{defining graph} or \emph{Coxeter graph}. A \emph{standard parabolic subgroup} of $(A,S)$ is the subgroup $A_Y$ generated by some $Y\subseteq S$; by a result of van der Lek, $(A_Y,Y)$ is itself an Artin system \cite{VanDerLek}.

Two Artin generating sets for the same Artin group are \emph{twist equivalent} if they are related by a sequence of \emph{elementary twists}, whose definition we briefly recall, postponing all details to Definition~\ref{defn:twist_eq}. Given an Artin generating set $S$, let $Y\subseteq S$ be a subset which separates $\Gamma_S$, in the sense of Definition~\ref{defn:separating}. Suppose that the Artin system $(A_Y,Y)$ is of spherical type (meaning the corresponding Coxeter group is finite) and does not decompose as a direct product. Performing an elementary twist of $S$ along $Y$ roughly amounts to conjugating one of the connected components of $\Gamma_S-\Gamma_Y$ by the \emph{Garside element} associated to $(A_Y,Y)$ (this is a distinguished element corresponding to the longest element in the associated Coxeter group, see \cite{BrieskornSaito}). Two graphs $\Gamma$ and $\Gamma'$ are said to be \emph{twist equivalent} if $\Gamma\cong \Gamma_S$ and $\Gamma'\cong\Gamma_U$ for some twist equivalent Artin generating sets $S$ and $U$ of an Artin group $A$.

 In all the solved cases of the isomorphism problem, two graphs give rise to isomorphic Artin groups if and only if they are twist equivalent, so it is natural to ask if this is always the case:

\begin{question}[{\cite[{Problem~28}]{charney2016problems}}]
    If two labelled graphs $\Gamma$ and $\Delta$ give rise to isomorphic Artin groups, are they necessarily twist equivalent?
\end{question}

As a first step in this direction, Brady, McCammond, M\"uhlherr, and Neumann conjectured the following:
\begin{conjintro}[{Weak twist conjecture, \cite[Conjecture 8.2]{BMMNtwistconjecture}}]\label{conjintro_WTC}
Let $(A, S)$ be an Artin system. If $U \subseteq A$ is an Artin generating set with $R_S=R_U$, then $\Gamma_S$ and $\Gamma_U$ are twist equivalent. 
\end{conjintro}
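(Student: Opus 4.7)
The plan is not to prove the conjecture outright (it is open in full generality), but to reduce it, by induction on $\lvert S \rvert$, to the case where $\Gamma_S$ contains no separating vertex. This is the ``atomic'' case flagged by the abstract, which would not be attacked by these combinatorial methods and would have to be resolved separately for each relevant class (spherical type, two-dimensional, FC, etc.).

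For the inductive step, suppose $s \in S$ separates $\Gamma_S$, and let $Y_1, \dots, Y_k \subseteq S$ be the vertex sets of the components of $\Gamma_S - \{s\}$ together with $s$ itself. By van der Lek each $(A_{Y_i}, Y_i)$ is again an Artin system, and Bass-Serre theory presents $A$ as an iterated amalgamated product of the $A_{Y_i}$ over $\langle s \rangle$. The first task is to transport this splitting to the second Artin generating set: using the hypothesis $R_S = R_U$, I would locate some $u \in U$ conjugate in $A$ to $s$ which separates $\Gamma_U$, together with subsets $V_1, \dots, V_k \subseteq U$ whose standard parabolics are conjugate in $A$ to the $A_{Y_i}$. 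Each $(A_{V_i}, V_i)$ is then another Artin system for the same group as $A_{Y_i}$, and one should verify that the reflection invariants still match, so the induction hypothesis applied to each piece produces twist equivalences $\Gamma_{Y_i} \sim \Gamma_{V_i}$.

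To conclude, I would lift these twist equivalences from the pieces to the ambient graphs: an elementary twist supported inside some $\Gamma_{Y_i}$ leaves the other pieces untouched and so extends, in a routine way, to an elementary twist of $\Gamma_S$. After performing the inductive twists on both sides, aligning the gluings across the common separating vertex may require an additional conjugation of one piece by the Garside element of a spherical standard parabolic containing $s$; this conjugation is itself, by definition, an elementary twist, so the whole sequence assembles into a twist equivalence $\Gamma_S \sim \Gamma_U$.

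The hardest step, and the main technical content of any such combination theorem, is the transport: identifying $u \in U$ corresponding to $s$ and extracting the matching partition $V_1, \dots, V_k$. The invariant $R_S = R_U$ only records the set of reflections as a subset of $A$, so recovering the separating-vertex structure of $\Gamma_S$ from this data requires a ``ribbon''-type statement controlling how the cyclic subgroup $\langle s \rangle$ and its role in the graph-of-groups decomposition behave under a change of Artin generating set. Establishing this ribbon property for the separating vertex, and then showing it propagates through the amalgamation, looks to be the bottleneck; presumably this is exactly what the combination result for the ribbon property announced in the abstract is designed to provide.
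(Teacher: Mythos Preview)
The statement is a conjecture, and the paper does not prove it unconditionally; what the paper establishes is a conditional reduction (Theorem~\ref{thm:refrigidreduction}): if every \emph{big chunk} parabolic of $(A,S)$ satisfies the \emph{strong} twist conjecture, then $(A,S)$ satisfies the weak one. Your sketch has the same overall shape (split visually over a separating vertex, work on the pieces, reassemble), but two of your choices would not go through. First, your inductive hypothesis is the \emph{weak} conjecture on the pieces, i.e.\ only $\Gamma_{Y_i}\sim\Gamma_{V_i}$ as abstract labelled graphs. To glue, you need the two generating sets of each factor to be twist equivalent \emph{as subsets of that factor}; otherwise there is no way to align them across the amalgam. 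This is precisely the strong conjecture on the pieces, and it is why the paper's hypothesis is strong rather than weak. Second, your ``alignment'' step posits conjugation by a Garside element of some spherical parabolic containing $s$; in reality one is forced to conjugate by an \emph{arbitrary} element of the centraliser of the separating generator, which is a Dehn twist, not an elementary twist. The ribbon property enters exactly here (Proposition~\ref{prop:aa_ribbon_are_twists}) to factor that Dehn twist into elementary twists, not in the transport step as you suggest.

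The transport step itself is where the paper's method diverges most from yours. The paper does not attempt to read off directly from $R_S=R_U$ a separating vertex $u\in U$ matching $s$. Instead it invokes a structural result from the authors' prior work (Corollary~\ref{cor:same_parabolics}): the big chunk parabolics of $(A,S)$ and $(A,U)$ coincide as subgroups of $A$. This forces the canonical Bass--Serre trees $M_S$ and $M_U$ to lie in the same Forester--Guirardel--Levitt deformation space, and the path of elementary collapses and expansions between them (through surviving trees, using that the space is non-ascending) is what produces the matching visual splittings and the generalised twist equivalence. Your ``locate $u$ and the $V_i$ by hand'' proposal is exactly the step this machinery replaces, and there is no indication it can be done without something of comparable strength.
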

Here $R_S$, called the \emph{reflection set} of $(A,S)$, is the union of the conjugacy classes of $S$ in $A$, and similarly for $R_U$. The corresponding conjecture for Coxeter groups, which is \cite[Conjecture 8.1]{BMMNtwistconjecture}, has been proven to be false in \cite{RT_Cox_not_twisteq}; however, Artin groups are believed to be more rigid than their Coxeter counterparts. 

In this paper we consider the following stronger version of the twist conjecture:
\begin{conjintro}[Strong twist conjecture]\label{conjintro:Stc}
Let $(A, S)$ be an Artin system. If $U \subseteq A$ is an Artin generating set with $R_S=R_U$, then $S$ and $U$ are twist equivalent. 
\end{conjintro}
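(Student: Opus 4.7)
The plan is to directly produce, from the hypothesis $R_S = R_U$, a sequence of elementary twists that transforms $S$ into $U$. By assumption each $u \in U$ can be written as $u = g_u s_u g_u^{-1}$ for some $s_u \in S$ and $g_u \in A$, so the content of the conjecture is that the tuple $(g_u)_{u \in U}$ can be rewritten as a product of Garside elements $\Delta_Y$ for spherical separating $Y \subseteq S$, each of which corresponds to a single elementary twist.

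The first step is the labeled graph version (Conjecture~A): showing that the map $u \mapsto s_u$ is a bijection and induces a label‐preserving graph isomorphism $\Gamma_U \cong \Gamma_S$. The natural route is a centralizer analysis — each $s \in S$ has a constrained centralizer inside $A$, and braid relations $\mathrm{prod}(u,v,m) = \mathrm{prod}(v,u,m)$ holding in $U$ must match the exponents $m_{s_u s_v}$ of the corresponding pair in $S$. This is itself the Weak Twist Conjecture and is open in general, so progress here is a genuine prerequisite for the strong version.

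Granted this labeled isomorphism, the next step is to express $(g_u)$ as a product of Garside conjugations. I would proceed inductively on a suitable complexity, for instance the total Cayley length $\sum_u |g_u|$: locate a spherical separating $Y_0 \subseteq S$ such that conjugating one component of $\Gamma_S - Y_0$ by $\Delta_{Y_0}^{\pm 1}$ strictly decreases this complexity, which by definition realizes one elementary twist. Iterating must terminate with $U = S$. To find such a $Y_0$, one would decompose $A$ as a graph of groups along its spherical parabolic subgroups, use Bass-Serre theory to track where each $u \in U$ sits relative to this decomposition, and read off the relevant twist from the syllable structure of $g_u$.

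The main obstacle is the base case where $\Gamma_S$ admits no separating spherical subgraph. In this situation there are no non‐trivial elementary twists available, so the conjecture collapses to the \emph{rigidity} statement that $U$ must already coincide with $S$ up to a graph automorphism of $\Gamma_S$. I would attack this by constructing a canonical $A$‐action on a CAT(0)‐like complex whose vertex stabilizers are exactly the conjugates of standard parabolic subgroups and whose vertex links rigidly encode the labeled graph $\Gamma_S$ — variants of the Deligne or Salvetti complex are natural candidates. Proving such a rigidity theorem in full generality is the hardest part: no suitable canonical complex is currently known for arbitrary Artin groups, and this is precisely where I expect the proof to stall without substantial new input on the parabolic subgroup structure of $A$.
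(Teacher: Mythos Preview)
The statement you are attempting to prove is an \emph{open conjecture}; the paper does not prove it in general and does not claim to. What the paper establishes is a reduction (Theorem~\ref{thmintro:strong+ribbon=strong}): if the strong twist conjecture holds for every big chunk parabolic of $(A,S)$, and the vertex ribbon property holds for every clique parabolic, then the strong twist conjecture holds for $(A,S)$. This produces new examples (Corollary~\ref{corintro:strong_twist_for_strong_chunks}) but leaves the general statement open.

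Your outline is candid about this: you correctly isolate the base case---where $\Gamma_S$ has no separating spherical subgraph---as requiring a rigidity theorem that is not available for arbitrary Artin groups, and you say the argument is expected to stall there. That is exactly the obstruction the paper sidesteps by \emph{assuming} the conjecture on big chunks rather than proving it. Your Bass--Serre inductive strategy for the non-rigid part is in the same spirit as the paper's approach (visual splittings over separating vertices, tracked through a deformation space), though the paper uses a specific deformation space of $A$-trees and Forester's theory of elementary deformations rather than a length-complexity induction on $\sum_u |g_u|$.

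One small correction to your first step: even the weak twist conjecture does not assert that $u \mapsto s_u$ extends to a labelled graph \emph{isomorphism} $\Gamma_U \cong \Gamma_S$, only that $\Gamma_U$ and $\Gamma_S$ are twist equivalent (which may require passing through intermediate graphs). So the ``labelled isomorphism'' you grant yourself is already stronger than Conjecture~\ref{conjintro_WTC}. In any case, since the conjecture is open, there is no proof in the paper to compare against; your proposal is a reasonable programme sketch, not a proof, and you have identified the genuine gap yourself.
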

The analogue of the strong twist conjecture for Coxeter groups (with the appropriate extra conditions to avoid the known counter examples) was already considered by M\"uhlherr in \cite[Conjecture 2]{Muhlherr}.

\subsection*{A combination theorem for the strong twist conjecture.}
A \emph{1--chunk}\footnote{Such a subgraph has appeared in the literature under the name of \emph{biconnected component}, but we prefer the term 1--chunk both to mimic the terminology \emph{big chunk} that we used in \cite{JonManSar_JSJSpl}, and because the word \emph{chunk} has already been used in e.g. \cite{crisp2005automorphisms,an2022automorphismgroupsartingroup,BMV,Jones_VF} to denote maximal subgraphs without separating edges or vertices.} of a simplicial graph is a connected induced subgraph without separating vertices which is maximal with these properties. Given an Artin system $(A,S)$, a \emph{1--chunk parabolic subgroup} is a subgroup conjugated to some $A_X$, where $X\subseteq S$ spans a 1--chunk in $\Gamma_S$. In our previous work, we proved that every isomorphism between two Artin systems induces a bijection between the sets of 1--chunks, which preserves the isomorphism type~\cite{JonManSar_JSJSpl}. In light of this, one might hope to recover the strong twist conjecture for an Artin system from the twist conjecture for its 1--chunk parabolic subgroups. We prove just that in our first main result, under a further mild hypothesis:
\begin{thmintro}[see Theorem~\ref{thm:strong+ribbon=strong}]\label{thmintro:strong+ribbon=strong}
  Let $(A,S)$ be an Artin system, with $\Gamma_S$ connected. Suppose that the following holds:
    \begin{itemize}
        \item For every $X\subseteq S$ spanning a 1--chunk, $(A_X,X)$ satisfies the strong twist conjecture;
        \item For every $Y\subseteq S$ spanning a clique, $(A_Y,Y)$ satisfies the vertex ribbon property.
    \end{itemize} 
    Then $(A,S)$ satisfies the strong twist conjecture.
\end{thmintro}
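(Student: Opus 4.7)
The plan is to induct on the block-cut decomposition of $\Gamma_S$. When $\Gamma_S$ is a single big chunk, the conclusion is exactly hypothesis (1) applied to $X = S$. In general, the big chunks of $\Gamma_S$ are glued in a tree-like pattern along separating vertices, so $A$ admits a graph of groups decomposition with vertex groups the big chunk parabolics $A_X$ and edge groups the cyclic parabolics $\langle v\rangle$ generated by the separating vertices of $\Gamma_S$. The induction will proceed along this tree.

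For the inductive step, fix a second Artin generating set $U$ with $R_S = R_U$. The JSJ splitting theorem of our previous paper produces an $A$-equivariant bijection between the big chunks of $\Gamma_S$ and those of $\Gamma_U$ preserving isomorphism type, together with a matching of the separating vertices. After a preliminary sequence of twists we may arrange that each pair of corresponding big chunks $X \subseteq S$ and $X' \subseteq U$ generate the same subgroup of $A$. Parabolic subgroups are closed under taking reflections, so $(A_X, X)$ and $(A_X, X')$ have equal reflection sets inside $A_X$; hypothesis (1) then provides a sequence of elementary twists in $(A_X, X)$ taking $X$ to $X'$. Each such twist is along a spherical irreducible parabolic $A_Z$ with $Z \subseteq X$ separating $\Gamma_X$, and we lift it to a twist of $(A, S)$ after possibly enlarging $Z$ by the separating vertices of $\Gamma_S$ that lie in $X$, which suffices because those vertices are exactly the interface between $\Gamma_X$ and the rest of $\Gamma_S$.

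After executing these chunk-by-chunk twists, the only possible discrepancy between the modified $S$ and $U$ is at the shared separating vertices: a single element of $A$ representing a separating vertex could still be written as different conjugates of generators on the two sides. This is where hypothesis (2) enters. The vertex ribbon property for cliques controls precisely how a given reflection is represented by the generators of a clique parabolic, and we use it to express the remaining discrepancy as a product of elementary twists along clique parabolics and single-vertex separators. The main obstacle will be the bookkeeping around this last step: a twist performed inside one big chunk can conjugate the generators of neighbouring chunks through the shared separating vertex, so the inductive hypothesis has to be strengthened to track how the generating sets of adjacent chunks evolve, or the chunk-local twists have to be carefully ordered along the block-cut tree. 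The role of the clique-ribbon hypothesis is exactly to ensure that any propagated discrepancy lies in the subgroup reachable by elementary twists available in the ambient system, closing the induction.
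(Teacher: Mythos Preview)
Your high-level strategy---induct along the block--cut tree, apply the chunk-wise strong twist conjecture, and use ribbons to glue---is indeed the shape of the paper's argument. But the proposal has a genuine gap at the step you pass over most quickly.

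\textbf{The alignment step is the whole difficulty.} You write ``after a preliminary sequence of twists we may arrange that each pair of corresponding big chunks $X\subseteq S$ and $X'\subseteq U$ generate the same subgroup of $A$.'' Corollary~\ref{cor:same_parabolics} only tells you that the \emph{conjugacy classes} of big chunk parabolics coincide. Conjugating $U$ to match one pair will in general destroy any match you had on a neighbouring pair, and there is no obvious reason why the conjugators can be made compatible across the whole block--cut tree using only elementary twists. This is precisely the content the paper spends Section~\ref{sec:proof_of_refrigid} on: one places $M_S$ and $M_U$ in a common (non-ascending) deformation space, walks between them through surviving trees via Forester's theorem, and shows that each elementary expansion or collapse corresponds to a Dehn twist or elementary twist (Propositions~\ref{prop:collapse Gamma-tree}--\ref{prop:expansion Gamma-tree}, assembled in Proposition~\ref{prop:zigzag}). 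Only after that machinery do you get equivariantly isometric trees, and hence literally equal vertex groups $A_{S_i}=A_{U_i}$ to which induction can be applied. Your proposal gives no substitute for this.

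\textbf{Lifting introduces Dehn twists, not elementary twists.} Your suggested lift---``enlarging $Z$ by the separating vertices of $\Gamma_S$ that lie in $X$''---does not work: enlarging $Z$ changes the Garside element $\Delta_Z$, so the resulting map is no longer the same conjugation. The correct lift keeps $Z$ fixed and assigns each component of $\Gamma_S\setminus\Gamma_X$ to the side of the twist containing its attaching vertex (this is Claim~\ref{claim:hatpsi} in the paper). More importantly, a twist equivalence in $A_X$ may include a global conjugation by some $h\in A_X$; if $r\in X$ is a separating vertex of $\Gamma_S$, then $r\mapsto hrh^{-1}$, and to keep the Artin structure you must conjugate the entire branch of $S$ attached at $r$ by $h$. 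That is a \emph{Dehn twist} of $(A,S)$ around $r$, not an elementary twist. This is exactly why the paper first proves a \emph{generalised} strong twist conjecture (Theorem~\ref{thm:refrigidreduction}) allowing Dehn twists, and only then invokes the vertex ribbon property: Proposition~\ref{prop:aa_ribbon_are_twists} shows that if the centraliser element $h$ is an $(r,r)$-ribbon, then the Dehn twist factors as a sequence of elementary edge twists. Your description of the ribbon property's role (``controls how a reflection is represented'') misses this; what is actually used is that every element centralising $r$ is an $(r,r)$-ribbon, hence a product of Garside elements along a loop-with-spikes in $\Gamma_S$, each of which gives an honest elementary twist.
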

First introduced by Paris~\cite{Pparabolics}, a \emph{ribbon} can be intuitively thought of as a minimal element that conjugates two standard parabolic subgroups of an Artin system (see Definition~\ref{def:ribbon}, which for our purposes only describes ribbons between standard generators). The \emph{vertex ribbon property} states that any element that conjugates two standard generators decomposes as a product of ribbons, and is conjectured to hold for all Artin systems. In the course of proving Theorem~\ref{thmintro:strong+ribbon=strong}, we also obtain a combination result for the vertex ribbon property, see Theorem~\ref{thmintro:ribbon_extend} below.

As an application of Theorem~\ref{thmintro:strong+ribbon=strong}, we exhibit new families of Artin systems satisfying the strong twist conjecture. Our result covers Artin systems for which even the weak twist conjecture was not previously known (see Figure~\ref{fig:strongtwist_new_example} for an example).
\begin{corintro}[see Corollary~\ref{cor:strong_twist_for_strong_chunks}]\label{corintro:strong_twist_for_strong_chunks}
Let $(A,S)$ be an Artin system, with $\Gamma_S$ connected. Suppose that, for every $X\subseteq S$ spanning a 1--chunk, the Artin system $(A_X,X)$ is of one of the following types:
\begin{enumerate}
\item\label{1} $(A_X,X)$ is right-angled (i.e. all edge labels are $2$);
\item\label{2} $(A_X,X)$ is of large-type (i.e. all edge labels are at least $3$), triangle-free (i.e. no induced subgraph of $\Gamma_X$ is a triangle) and has no separating edge;
\item\label{4} $(A_X,X)$ is of type XXXL (i.e. all edge labels are at least $6$) and has no separating edge;
\item\label{3} $(A_X,X)$ is of large-type and free-of-infinity (i.e. $\Gamma_X$ is a complete graph);
\item\label{5} $(A_X,X)$ is of type $A_n$ for $n \geq 2$; $B_n$ for $n \geq 2$; or $D_n$ for $n \geq 4$ and $n \neq 5$.
\end{enumerate}
Then $(A,S)$ satisfies the strong twist conjecture.
\end{corintro}

In Lemmas \ref{lem:strong_RAAG}-\ref{lem:strong_large}-\ref{lem:strong_spherical}, by combining existing results in the literature, we show that the classes of Artin systems appearing as 1--chunks above satisfy the strong twist conjecture. Furthermore, if $(A,S)$ is as in Corollary~\ref{corintro:strong_twist_for_strong_chunks}, the Artin systems supported on cliques of $\Gamma_S$ enjoy the vertex ribbon property, and actually the more general ribbon property. Indeed, the ribbon property in case \eqref{1} is work of Godelle~\cite{godelle2003parabolic}; case \eqref{5} is covered by results of Paris and Godelle for spherical type Artin groups \cite{Pparabolics, godelle2002normalisateurs}; finally, Artin systems of type \eqref{2}-\eqref{3}-\eqref{4} are two-dimensional, and the ribbon property follows from \cite{godelle2007artin}.

\begin{figure}[htp]
\centering
\includegraphics[width=\textwidth, alt={An example of an Artin group whose 1--chunks are as in the theorem, which therefore satisfies the strong twist conjecture}]{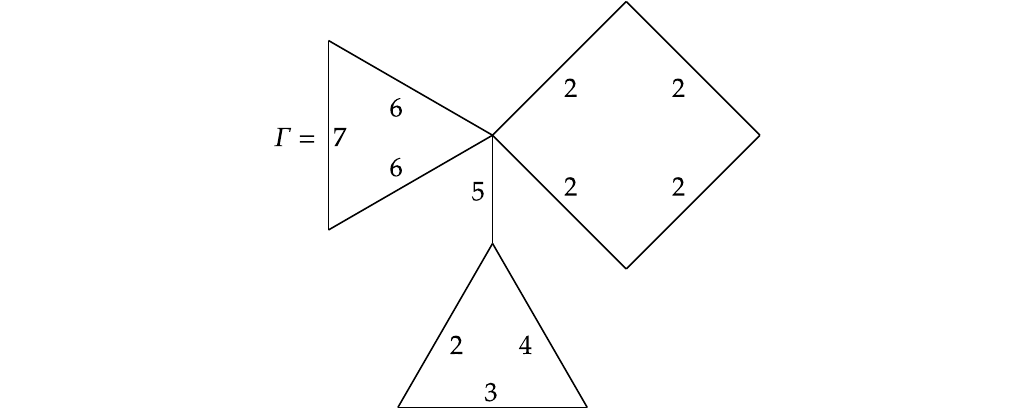}
\caption{In the labelled graph $\Gamma$ above, the 1--chunks are the two triangles, the square, and the edge labelled by $5$. They are all of the type described in Corollary~\ref{corintro:strong_twist_for_strong_chunks}, so $A_\Gamma$ satisfies the strong twist conjecture.}
\label{fig:strongtwist_new_example}
\end{figure}

Theorem \ref{thmintro:strong+ribbon=strong} and Corollary \ref{corintro:strong_twist_for_strong_chunks} should be compared to \cite{ratcliffe2013jsj}, where the authors describe a JSJ decomposition of Coxeter groups over FA subgroups, and produce a similar combination theorem for the (strong) twist conjecture, allowing them to conclude that all Coxeter systems with chordal defining graphs satisfy the strong twist conjecture. Indeed, underlying our techniques is an explicit JSJ decomposition for any Artin group, which we constructed in our previous work \cite{JonManSar_JSJSpl}.

If we drop the assumption on the ribbon property in Theorem~\ref{thmintro:strong+ribbon=strong}, we still get a sufficient condition for the weak twist conjecture:

\begin{thmintro}[see Theorem~\ref{thm:refrigidreduction}]\label{thmintro:refrigidreduction}
    Let $(A,S)$ be an Artin system, with $\Gamma_S$ connected. Suppose that, for every $X\subseteq S$ spanning a 1--chunk, $(A_X,X)$ satisfies the strong twist conjecture. Then $(A,S)$ satisfies the weak twist conjecture.
\end{thmintro}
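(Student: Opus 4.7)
The plan is to follow the strategy of Theorem~\ref{thmintro:strong+ribbon=strong}, but this time conclude only at the level of labelled graphs rather than of generating sets. Given an Artin generating set $U \subseteq A$ with $R_U=R_S$, I would first invoke the JSJ bijection established in \cite{JonManSar_JSJSpl}, which produces a correspondence $\phi$ between the big chunks of $(A,S)$ and those of $(A,U)$ preserving isomorphism type and respecting the block--cut-vertex structure.

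For each pair $(X,\phi(X))$ of corresponding big chunks, fix $g_X \in A$ with $A_{\phi(X)} = g_X A_X g_X^{-1}$; the set $g_X^{-1} \phi(X) g_X$ is then a second Artin generating set of $A_X$, whose reflection set inside $A_X$ coincides with $R_X$ (both equal $A_X \cap R_S$, using the standard fact that reflections of $(A,S)$ lying in a standard parabolic are reflections of that parabolic). The strong twist hypothesis for $(A_X,X)$ then produces a sequence of elementary twists, internal to $A_X$, transforming $g_X^{-1}\phi(X)g_X$ into $X$.

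Each such local twist lifts to an elementary twist of the ambient system $(A,U)$: a subset $Y \subseteq \phi(X)$ separating $\Gamma_{\phi(X)}$ is also separating in $\Gamma_U$ (since big chunks attach to the rest of the graph only at cut vertices), and the Garside element of a spherical parabolic $A_Y$ is intrinsic to $(A_Y,Y)$ and independent of the ambient Artin system. Performing these lifted twists for each big chunk in turn yields a new Artin generating set $U^\star$, twist equivalent to $U$, whose big chunks are concrete conjugates $h_X X h_X^{-1}$ of the corresponding big chunks of $(A,S)$, with the conjugating elements $h_X$ aligning cut vertices coherently across adjacent big chunks.

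Finally, concatenating the labelled graph isomorphisms $\Gamma_X \cong \Gamma_{h_X X h_X^{-1}}$ induced by conjugation, glued along matched cut vertices, yields a labelled graph isomorphism $\Gamma_S \cong \Gamma_{U^\star}$; no spurious edges between distinct big chunks can appear in $\Gamma_{U^\star}$, as this would contradict its block decomposition, which is inherited from that of $\Gamma_U$ via the JSJ bijection. Thus $U^\star$ and $U$ witness the twist equivalence of the graphs $\Gamma_S$ and $\Gamma_U$. The hardest point is verifying that the conjugating elements $h_X$ actually align at shared cut vertices once all the lifted twists have been carried out: this requires a careful analysis of how twists performed within one big chunk propagate to neighbouring chunks via conjugation by Garside elements. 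This is precisely the step where the vertex ribbon property was used in Theorem~\ref{thmintro:strong+ribbon=strong}; without it, the coherence obtained is enough to recover a labelled graph isomorphism but not a bijection between $S$ and $U^\star$, which is why we can only conclude the weak twist conjecture.
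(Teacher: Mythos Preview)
Your proposal takes a route that is quite different from the paper's, and it has a genuine gap at exactly the point you flag as ``the hardest''.

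The paper does \emph{not} argue big-chunk-by-big-chunk and then glue. Instead it proves something strictly stronger than the weak twist conjecture: it introduces \emph{Dehn twists} (Definition~\ref{defn:dehn_twist}) and shows that $S$ and $U$ are related by a finite sequence of elementary twists, conjugations, \emph{and} Dehn twists (this is the \emph{generalised} strong twist conjecture, Theorem~\ref{thm:refrigidreduction}). Since Dehn twists do not change the isomorphism type of the defining graph, the weak twist conjecture follows immediately. The proof is organised around the deformation space $\crush{D}{R}$: one shows $M_S$ and $M_U$ lie in the same non-ascending deformation space, connects them by a path of surviving trees (Theorem~\ref{thm:GLSurvivingPath}), and tracks how each elementary collapse or expansion transforms an $S'$-tree into an $S''$-tree with $S'\sim_D S''$ (Propositions~\ref{prop:collapse Gamma-tree} and~\ref{prop:expansion Gamma-tree}). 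The misalignment at cut vertices is absorbed by the Dehn twists, never by an argument that the graphs match directly.

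Your approach, by contrast, needs to verify that after performing the lifted local twists, the shared cut vertex of two adjacent big chunks $h_{X_1}X_1h_{X_1}^{-1}$ and $h_{X_2}X_2h_{X_2}^{-1}$ corresponds, under each local isomorphism $\Gamma_{h_{X_i}X_ih_{X_i}^{-1}}\cong\Gamma_{X_i}$, to the \emph{same} vertex $v\in X_1\cap X_2$. You assert that ``the coherence obtained is enough to recover a labelled graph isomorphism'', but this is precisely what must be proved, and you give no argument for it. Concretely: the twist equivalence inside $A_{X_1}$ sends the (conjugated) cut vertex to some $v_1\in X_1$ conjugate to $v$ in $A_{X_1}$; but conjugacy of standard generators does not in general yield a labelled-graph automorphism of $\Gamma_{X_1}$ carrying $v_1$ to $v$, so gluing $\Gamma_{X_1}$ at $v_1$ to $\Gamma_{X_2}$ at $v_2$ need not produce a graph isomorphic to $\Gamma_S$. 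There is also a secondary issue you pass over: twists lifted from one big chunk conjugate all neighbouring big chunks attached on the twisted side, so the ``starting data'' for the next big chunk has shifted, and your description of $U^\star$ as having big chunks $h_X X h_X^{-1}$ with independently chosen $h_X$ is not accurate without further bookkeeping.

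The paper's introduction of Dehn twists is not an optional refinement but the mechanism that resolves exactly this alignment problem: the discrepancy $h_{X_2}^{-1}h_{X_1}$ centralises the cut vertex, and conjugating one side by it is a Dehn twist.
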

In the above setup, we actually show that every Artin generating set $U$ with $R_S=R_U$ is related to $S$ by a finite sequence of elementary twists and \emph{Dehn twists} (see Definition~\ref{defn:dehn_twist}). Indeed, Theorem~\ref{thmintro:strong+ribbon=strong} follows from Theorem~\ref{thmintro:refrigidreduction} by using the vertex ribbon property to describe Dehn twists as compositions of elementary twists.

\subsection*{A combination theorem for the vertex ribbon property}
As a collateral result of the proof of Theorem~\ref{thmintro:strong+ribbon=strong}, we can reduce the vertex ribbon property to the free-of-infinity Artin groups.

\begin{thmintro}[see Corollary~\ref{cor:ribbon_for_cliques_ribbon_for_all}]\label{thmintro:ribbon_extend} 
    Let $(A,S)$ be an Artin system. Suppose that, for any $Y \subseteq S$ spanning a clique in $\Gamma_S$, $(A_Y,Y)$ satisfies the vertex ribbon property. Then $(A,S)$ satisfies the vertex ribbon property. 
\end{thmintro}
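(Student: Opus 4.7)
The plan is a two-stage reduction from $(A,S)$ down to the clique parabolics. For the first stage, I would pass from the full system to its big chunk parabolics via a Bass-Serre argument on the JSJ decomposition of $(A,S)$ constructed in \cite{JonManSar_JSJSpl}, whose vertex groups are big chunk parabolics and whose edge groups are infinite cyclic, generated by (conjugates of) separating standard generators. Given $g\in A$ with $gsg^{-1}=t$ for $s,t\in S$, the element $s$ fixes the subtree $T_s$ of the Bass-Serre tree $T$ consisting of vertices whose stabilisers contain $s$, and $gT_s$ equals the analogous subtree $T_t$ for $t$. Walking along a path in $T$ from $T_s$ to $T_t$ would factorise $g$ into a sequence of vertex-stabiliser factors and edge-crossing factors; the latter are ribbons by construction (each conjugates two copies of a single separating generator), while the former conjugate two standard generators inside a common big chunk parabolic. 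This reduces the vertex ribbon property for $(A,S)$ to the vertex ribbon property for every big chunk parabolic.

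The second stage would reduce the vertex ribbon property for a big chunk parabolic $A_X$ to the clique case. Here I would induct on a combinatorial measure of the non-cliqueness of $\Gamma_X$ (for example $|X|$ minus the clique number), with the base case $\Gamma_X$ itself a clique being handled by the hypothesis. For the inductive step, I would invoke Godelle's analysis of normalisers of spherical parabolics \cite{godelle2007artin}, which controls any ribbon by a spherical subparabolic, and spherical parabolics are necessarily supported on cliques of $\Gamma_X$. Pulling the resulting clique ribbons back through the first stage would then produce the desired decomposition of the original $g$ into ribbons of $(A,S)$.

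The hard part will be the second stage: because a big chunk admits no further cyclic splitting by definition, one cannot iterate the Bass-Serre approach, so the clique-to-big-chunk reduction requires a genuinely combinatorial argument rather than a tree-of-groups argument. This is where the technical heart of the proof of Theorem~\ref{thmintro:strong+ribbon=strong} should lie, and I expect the corollary to be extracted directly from that intermediate reduction — specifically, from the step that rewrites the Dehn twists appearing in Theorem~\ref{thmintro:refrigidreduction} as compositions of elementary twists using the clique ribbon hypothesis.
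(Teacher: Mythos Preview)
Your two-stage plan has a genuine gap in Stage~2, and the difficulty you flag there is an artefact of the particular splitting you chose in Stage~1, not an intrinsic obstacle.

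You propose to run the Bass--Serre argument on the JSJ tree $M_S$, whose edge groups are cyclic. This forces the reduction to terminate at big chunk parabolics, and since big chunks admit no further cyclic splitting you are then stuck: the appeal to Godelle is vague (his results assume spherical or FC type, not arbitrary big chunks), and the Dehn-twist-rewriting step you cite from the proof of Theorem~\ref{thmintro:strong+ribbon=strong} goes in the wrong direction --- it \emph{consumes} the vertex ribbon property to decompose Dehn twists, it does not \emph{establish} the property.

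The paper avoids this entirely by observing that the combination argument (Proposition~\ref{prop:combination_ribbons}) works for \emph{any} visual amalgam $A=A_B *_{A_D} A_C$, with no restriction on $A_D$. So rather than split over separating vertices, one simply picks any two non-adjacent $u,v\in S$ and splits as $A_{S\setminus\{v\}} *_{A_{S\setminus\{u,v\}}} A_{S\setminus\{u\}}$. Every clique of either factor is already a clique of $\Gamma_S$, so a straight induction on $|S|$ reduces directly to the clique hypothesis, with no intermediate stop at big chunks. Your Stage~1 tree argument is essentially the right engine; you just need to feed it this coarser splitting and iterate.
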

The above result is a consequence of a more general combination theorem, stating that if an Artin system $(A,S)$ admits a visual splitting and the factors satisfy the vertex ribbon property then so does the whole $(A,S)$ (see Proposition~\ref{prop:combination_ribbons}).

\subsection*{Organisation of the paper}
Section~\ref{sec:background} contains generalities on simplicial actions on trees, Bass--Serre theory and \emph{deformation spaces}, in the sense of Forester \cite{forester}, which are the main technical tools of this paper. In Section~\ref{sec:cyclicsplit} we recall some properties of Artin groups; then, building on results from our previous paper \cite{JonManSar_JSJSpl}, we associate a deformation space to every Artin system, whose trees roughly correspond to the maximal visual splittings over 1--chunk parabolic subgroups.

In Section~\ref{sec:red twist conj}, we prove a technical intermediate theorem, involving a generalised version of the strong twist conjecture that allows Dehn twists (as well as elementary twists and conjugations), which in particular implies Theorem \ref{thmintro:refrigidreduction}. In Section~\ref{sec:strongtwist_for_some}, we upgrade this theorem under the further mild assumption that the 1--chunks satisfy the ribbon property. In particular, we obtain Theorem~\ref{thmintro:strong+ribbon=strong}, which we then combine with the strong twist conjecture for several classes of Artin groups to get Corollary~\ref{corintro:strong_twist_for_strong_chunks} (see Corollaries~\ref{thm:strong+ribbon=strong} and~\ref{cor:strong_twist_for_strong_chunks}, respectively). In the process we also prove the combination result for the ribbon property, which then yields Theorem~\ref{thmintro:ribbon_extend} (see Corollary~\ref{cor:ribbon_for_cliques_ribbon_for_all}).

\subsection*{Acknowledgements}
We are grateful to Piotr Przytycki, María Cumplido, and Alexandre Martin for insights about the twist conjecture and the ribbon property, and to Stefanie Zbinden for helpful conversations. A special thanks goes to Laura Ciobanu and Alessandro Sisto for comments on a first draft of this document. Finally, we would like to thank the anonymous referee for their careful reading of the manuscript and helpful suggestions.

\section{Background on groups acting on trees}\label{sec:background}
\noindent We recall here some properties of simplicial actions on trees. We shall work in the following setting:

\begin{notation}\label{notation:action_on_trees}
    By \emph{tree} we mean a simply connected simplicial graph, equipped with the metric where each edge has length one. Given a group $G$, a \emph{$G$-tree} $(T,\Omega)$ is a tree $T$ endowed with an action $\Omega\colon G\to \aut{T}$ by simplicial isometries, without edge inversions, and \emph{minimal} (i.e. no proper sub-tree is invariant under the action). We often suppress the reference to the action $\Omega$ when it is not relevant or it is clear from the context. Similarly, for every $g\in G$ and $X\subseteq T$ we often denote $\Omega(g)(X)$ by $g\cdot X$ or simply $gX$. 
\end{notation}
\begin{rem}
    Bass \cite[Proposition~7.9]{Bass} proved that, if $G$ is finitely generated, a minimal $G$-action on a tree $T$ is also cocompact.
\end{rem}

\noindent Throughout we will write $\Stab{\Omega}{S}$ for the pointwise stabiliser of $S \subseteq T$, or $\Stab{G}{S}$ if there is no danger of confusing the action.

 The \emph{translation length} of an element $g\in G$ is defined as $|g|\coloneq\inf_{x\in T}\dist_T(x,g\cdot x)$. The \emph{minset} of $g$ is the subtree spanned by all points $x$ which realise the translation length.  If $|g|=0$ the element is called \emph{elliptic}, and its minset is the sub-tree of all fixed points of $g$. If otherwise $|g|>0$ the element is \emph{loxodromic}, and its minset is a geodesic line on which $g$ acts by translations. A subgroup $H\le G$ acts \emph{elliptically} on $T$ (or, is an elliptic subgroup) if it fixes a vertex of $T$.

\subsection{Bass--Serre theory} We recall the concept of a graph of groups and show that it is equivalent to the simplicial action of a group on a tree. We refer to \cite{trees} for the definitions and results in this subsection. As we do not need the full generality of Bass--Serre theory, we adapt the definitions and results to the setting of graphs of groups based on a tree. 

\begin{defn}[Graph of groups]
    Let $Y$ be a simplicial tree;
    \begin{enumerate}
        \item a \emph{graph of groups} on $Y$ is a system $\mathcal G = \{G_v\}_{v\in\ver Y}\cup\{G_e\}_{e\in\edge Y}$ of groups and group monomorphisms $\psi_e^v\colon G_e\to G_{v}$, whenever $v$ is a vertex of $e$. The groups $G_v$'s are the \emph{vertex groups} of $\mathcal G$, whereas the groups $G_e$'s are the \emph{edge groups} of $\mathcal G$;
        \item the \emph{fundamental group $\pi_1(\mathcal G,Y)$ of $\mathcal G$} is the colimit of $\mathcal G$, i.e. the group 
        \[
        \frac{\free_{v\in\ver Y}G_v}{\nspan{\psi_{\{a,b\}}^a(g)\psi_{\{a,b\}}^b(g)^{-1}:\{a,b\}\in\edge Y, g\in G_{ab}}}.
        \]
    \end{enumerate}
\end{defn}
\noindent Note that, more in general, graph of groups can be defined on graphs that are not simplicial trees. In that case, the underlying graph needs to be an oriented graph, possibly with loops.  Given a group $G$, a \emph{graph-of-groups decomposition} of $G$ is a graph of groups $\mathcal G$ on some tree $Y$ such that $G\cong \pi_1(\mathcal G,Y)$. 

\begin{ex}[Amalgamated products]
    \label{ex:amalgProd}
    Each amalgamated product of groups $G=A*_CB$ admits a graph-of-groups decomposition as follows: let $Y$ consist of a single edge $\{a,b\}$ and let $\mathcal G$ be defined as $G_a=A$, $G_b=B$ and $G_{\{a,b\}}=C$, with the monomorphisms $G_{\{a,b\}}\to G_a$ and $G_{\{a,b\}}\to G_b$ being the inclusion maps. The universal property of amalgamated products gives $\pi_1(\mathcal G,Y)\cong G$.
\end{ex}

\begin{defn}[Bass--Serre tree]
    Let $\mathcal G$ be a graph of groups on an oriented graph~$Y$; the \emph{Bass--Serre tree}\footnote{This tree is sometimes called the \emph{universal covering} of $\mathcal G$.} of $\mathcal G$ is the $\pi_1(\mathcal G,Y)$-graph $\tilde X(\mathcal G)$ defined as follows. The set of vertices of $\tilde X(\mathcal G)$ is $\bigsqcup\{\pi_1(\mathcal G,Y)/G_v:v\in\ver Y\}$, while its set of edges is $\bigsqcup\{\pi_1(\mathcal G,Y)/G_e:e\in\edge Y\}$ (the edge $gG_{\{a,b\}}$ connecting the vertices $gG_a$ and $gG_b$).\par
    
    The fundamental group $\pi_1(\mathcal G,Y)$ acts on the vertices of $\tilde X(\mathcal G)$ by left multiplication and this action descends to a simplicial action without inversion on $\tilde X(\mathcal G)$.
\end{defn}

\noindent Notice that, for every $g\in \mathcal G$ and $v\in \ver Y$, the stabiliser of $gG_v$ in $\mathcal G$ is $gG_vg^{-1}$, which we also denote by $(G_v)^g$. Similarly, for every $e\in \edge Y$, the stabiliser of $gG_e$ in $\mathcal G$ is $(G_e)^g\coloneq gG_eg^{-1}$.

The term Bass--Serre \emph{tree} is justified by the following theorem:

\begin{thm}
    Let $\mathcal G$ be a graph of groups over a tree $Y$; then the graph $\tilde X(\mathcal G)$ is a simplicial tree and the quotient graph $\tilde X(\mathcal G)/\pi_1(\mathcal G,Y)$ is isomorphic to $Y$. \par 
    Moreover, let $X\subseteq \tilde X(\mathcal G)$ be any connected fundamental domain for the action of $\pi_1(\mathcal G,Y)$ on $\tilde X(\mathcal G)$ and let $\mathcal H$ be the graph of groups on $X$ defined in the following way: for every $v\in\ver X$, let $H_v=\Stab{\pi_1(\mathcal G,Y)}{v}$; for every $e\in\edge X$, let $H_e=\Stab{\pi_1(\mathcal G,Y)}{e}$; if $v$ is an endpoint of $e$, then let the monomorphism $\iota_e^v\colon H_e\to H_v$ be the inclusion map. The graphs of groups $\mathcal G$ and $\mathcal H$ are isomorphic (as systems of groups).
\end{thm}

\noindent A converse result also holds:

\begin{thm}
    Let $M$ be a simplicial tree, let $G$ be a group acting on $M$ without inversion. Moreover, assume that $M/G$ is a tree and let $X\subseteq M$ be a connected fundamental domain with respect to this action. Let $\mathcal H$ be the graph of groups on~$X$ constructed by taking the system of stabilisers (as in the previous theorem). Then $G\cong\pi_1(\mathcal H,X)$ and there is an equivariant isomorphism between the $G$-tree $M$ and the $\pi_1(\mathcal H,X)$-tree $\tilde X(\mathcal H)$.
\end{thm}

\noindent In light of this equivalence, we will often pass freely between a graph of groups and its Bass--Serre tree.

\begin{defn}\label{def_splitoverZ}
    A \emph{splitting over $\Z$} of a group $G$ is a $G$-tree $T$ with infinite cyclic edge stabilisers. We then say that $G$ \emph{splits over $\Z$}.
\end{defn}
\subsection{Deformation spaces}
For the remainder of this section we follow \cite{GL_def_trees}, and recall some properties of \emph{deformation spaces}, a notion originally due to Forester \cite{forester}. Intuitively, a deformation space is a space parametrised by $G$-trees.

\begin{defn}\label{def:elementarycollapse}
    We say an edge $e=\{u,v\}$ of a $G$-tree $(T,\Omega)$ is \emph{collapsible} if $\Stab{G}{u} \le \Stab{G}{v}$, and $u,v$ lie in different $G$-orbits. The associated \emph{elementary collapse} produces a new $G$-tree $(T', \Omega')$ as follows: for every $g\in G$ one removes $g\cdot e$ from $T$ and identifies $g\cdot u$ and $g\cdot v$. This gives a $G$-equivariant map $\phi\colon T\to T'$. The inverse of an elementary collapse is called an \emph{elementary expansion}. An \emph{elementary deformation} is a finite sequence of elementary collapses and expansions. 
\end{defn}

\noindent Elementary collapses preserve elliptic subgroups, in the following sense:
\begin{lemma}
    \label{lem:stabsOfCollapse}
    Let $(T,\Omega)$ be a $G$-tree, let $e=\{u,v\}$ be an edge of $T$ with $u,v$ in different $\Omega$-orbits and $\Stab \Omega u\le\Stab\Omega v$, and let $\phi\colon(T,\Omega)\to(T',\Omega')$ be the elementary collapse of the edge~$e$. Every edge stabiliser (resp. vertex stabiliser) for $\Omega'$ is also an edge stabiliser (resp. vertex stabiliser) for $\Omega$.
     In particular:
    \begin{enumerate}
        \item if $x'$ is an edge of $T'$ (resp. a vertex of $T'$ not in the orbit of $\phi(e)$), then for every edge (resp. vertex) $x$ of $T$ such that $\phi(x) = x'$, $\Stab\Omega x=\Stab{\Omega'}{x'}$;
        \item $\Stab{\Omega'}{\phi(e)}=\Stab\Omega v$.
    \end{enumerate}
\end{lemma}

\begin{proof}
We first notice that, if $x'$ is an open edge of $T'$ (note that restricting to the interior of edges does not change the stabilisers), or a vertex which is not in the $G$-orbit of $\phi(e)$, then $\phi$ is injective on $\phi^{-1}(x')$. Now suppose $\phi(x) = x'$. Then if $gx' = x'$ it follows by $G$-equivariance that $\phi(x)= x'= gx' = \phi(gx)$, so $x = gx$ by injectivity. Conversely if $gx = x$ then $gx' = g\phi(x) = \phi(gx) = x'$ by equivariance. Hence, we have that $\Stab{\Omega'}{x'}=\Stab{\Omega}{x}$ for any edge (resp. vertex) $x$ such that $\phi(x) = x'$.

To conclude the proof, we now show that
\begin{equation}\label{eq:stab_phi(e)}
    \Stab{\Omega'}{\phi(e)}=\Stab{\Omega}{v}.
\end{equation}
    For the inclusion $\le$, let $g\in \Stab{\Omega'}{\phi(e)}$. Towards a contradiction, assume that $g\cdot v\neq v$. Because $g$ fixes $\phi(e)$, the vertices $v$ and $g\cdot v$ have the same image under~$\phi$. Then there is a path of $G$-translates $e_1=g_1\cdot e, e_2=g_2\cdot e_1,\ldots, e_k=g_k \cdot e_{k-1}$ of $e$ connecting $v$ to $g\cdot v$. Notice that $k\ge 2$, as $v$ and $u$ are in different $G$-orbits; moreover $g_1\cdot v=v$, as the path connects $v$ to $g\cdot v$, and for the same reason $g_2$ fixes $g_1 \cdot u$ but not $v$. However $g_2 \in \Stab{\Omega}{g_1 \cdot u}=\Stab{\Omega}{u}^{g_1}\le \Stab{\Omega}{v}$, which is absurd.

    For the converse inclusion, if $g\in \Stab{\Omega}{v}$ then $$g\phi(e)=g\phi(v)=\phi(g\cdot v)=\phi(v)=\phi(e),$$
    where we used that $\phi$ is $G$-equivariant.
\end{proof}
\begin{defn}
    Two $G$-trees $(T,\Omega)$ and $(T',\Omega')$ are \emph{$G$-equivariantly isometric} if there is a simplicial isometry $f\colon T\to T'$ such that, for every $g\in G$, we have that $f\circ \Omega(g)=\Omega'(g) \circ f$.
\end{defn}
\begin{defn}\label{defn:deformation space}
Given a $G$-tree $(T, \Omega)$, the \emph{deformation space} containing $(T, \Omega)$ is the simplicial realisation of the following partial order. Take the underlying set to be all $G$-trees related to $(T,\Omega)$ by an elementary deformation, up to $G$-equivariant isometry. Then say $(T', \Omega') \geq (T'', \Omega'')$ if $(T', \Omega')$ admits a sequence of elementary collapses to $(T'', \Omega'')$.
\end{defn}

\begin{rem}
For experts, what we refer to here as a deformation space may more accurately be called the simplicial spine. This is a deformation retract of the genuine deformation space, which is obtained by considering a space of metric trees (instead of only simplicial trees). We restrict to the spine for the ease of exposition.
\end{rem}

\noindent The following theorem of Forester is a very convenient characterisation of when two $G$-trees are in the same deformation space:

\begin{thm}\cite[Theorem 1.1]{forester}\label{thm:forester}
    Let $(T, \Omega)$ and $(T', \Omega')$ be cocompact $G$-trees. Then the following are equivalent:
    \begin{enumerate}
        \item $(T, \Omega)$ and $(T', \Omega')$ are related by an elementary deformation (i.e. they belong to the same deformation space).
        \item $(T, \Omega)$ and $(T', \Omega')$ have the same elliptic subgroups.
    \end{enumerate}
\end{thm}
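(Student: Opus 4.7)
\medskip

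\noindent\textbf{Proof sketch.}

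The implication $(1)\Rightarrow(2)$ is essentially contained in the previous lemma. Since elementary deformations are sequences of collapses and their inverses, it suffices to check that a single elementary collapse preserves the collection of elliptic subgroups. If $(T',\Omega')$ is obtained from $(T,\Omega)$ by collapsing the orbit of $e=\{u,v\}$ with $\Stab{\Omega}{u}\le \Stab{\Omega}{v}$, the lemma gives that every vertex stabiliser of $\Omega'$ is a vertex stabiliser of $\Omega$, hence elliptic in $T$. Conversely, any subgroup $H$ that fixes a vertex $x\in T$ also fixes the image $\phi(x)\in T'$, so ellipticity passes to $T'$ as well. Iterating across a deformation (and using that inverses of collapses are expansions) shows that elliptic subgroups are an invariant of the deformation space.

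For the harder direction $(2)\Rightarrow(1)$, my plan is to produce a common \emph{refinement} of $(T,\Omega)$ and $(T',\Omega')$ and then argue that both trees are joined to it by elementary collapses. First I would use the equality of elliptic subgroups to construct a $G$-equivariant simplicial map $f\colon T\to T'$: for each $G$-orbit of vertices pick a representative $v\in T$, use that $\Stab{\Omega}{v}$ is elliptic in $T'$ to send $v$ to some fixed vertex $f(v)\in T'$, and extend equivariantly to the $0$-skeleton; then extend across edges by choosing a geodesic and equivariantly subdividing, producing an expansion $\hat T\to T$ on which $f$ becomes simplicial. After this subdivision, preimages $f^{-1}(v')$ of vertices $v'\in T'$ are subtrees of $\hat T$ (they are connected because a geodesic between two points in $f^{-1}(v')$ must, by ellipticity of $\Stab{G}{v'}$, also be mapped to $v'$).

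Next I would equivariantly collapse each such subtree $f^{-1}(v')$ to a point: the resulting quotient tree $\overline{T}$ maps $G$-equivariantly and injectively on edges to $T'$, and in fact is $G$-equivariantly isometric to $T'$ by minimality and cocompactness. Each intermediate collapse is elementary provided that within $f^{-1}(v')$ one can order the edges so that at each step the collapsed edge joins two vertices one of whose stabilisers is contained in the other; this is arranged by inducting on the subtree and using that all relevant stabilisers are contained in $\Stab{G}{v'}$, combined with cocompactness to ensure the process terminates after finitely many $G$-orbits of moves.

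The main obstacle I expect is precisely this last point: arranging the collapses of the fibres $f^{-1}(v')$ as a finite sequence of \emph{elementary} moves in the sense of Definition~\ref{def:elementarycollapse}, which requires both the inclusion of stabilisers and the distinct-orbits condition at each step. Cocompactness (granted by hypothesis, and reinforced by Bass' remark above) will let one realise the whole process by finitely many $G$-orbits of elementary operations, thereby placing $T$ and $T'$ in the same deformation space.
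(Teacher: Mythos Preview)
The paper does not give its own proof of this statement: Theorem~\ref{thm:forester} is quoted verbatim from \cite[Theorem~1.1]{forester} and used as a black box, so there is no argument in the paper to compare your sketch against.

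On the sketch itself: your outline for $(2)\Rightarrow(1)$ is indeed the shape of Forester's argument (build an equivariant map, subdivide the domain, and collapse the point-preimages), and you correctly flag the genuine difficulty, namely that collapsing each fibre $f^{-1}(v')$ must be broken into \emph{elementary} collapses in the sense of Definition~\ref{def:elementarycollapse}. Two remarks. First, your justification that $f^{-1}(v')$ is a subtree is not quite right: this follows simply because any simplicial map between trees has connected point-preimages (the image of the geodesic between two preimages is a path from $v'$ to $v'$, hence constant); ellipticity of $\Stab{G}{v'}$ plays no role here. Second, your proposed fix for the obstacle (``all relevant stabilisers are contained in $\Stab{G}{v'}$, so one can order the collapses'') does not by itself rule out the bad case where an edge of the fibre has both endpoints in the same $G$-orbit with a strict inclusion of stabilisers; Forester's actual argument needs additional manoeuvres (expansions and folds interleaved with the collapses) to handle this, and it is not a routine induction. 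So the sketch is a faithful roadmap but stops short of the substantive step.
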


\begin{defn}
    A $G$-tree is \emph{reduced} if no elementary collapse is possible.
\end{defn}

\begin{rem}\label{rem:reduced JSJ tree}
    If a $G$-tree $T$ is not reduced, one obtains a (possibly non-unique) reduced $G$-tree $T'$ by collapsing collapsible edges until no collapse is possible. This procedure eventually ends, as $T/G$ is finite.
\end{rem}

\begin{defn}
    Given a $G$-tree $T$, we say (the orbit of) an edge $e$ is \emph{surviving} if there exists a sequence of elementary collapses from $T$ to a reduced tree, such that $e$ is not collapsed. We say $T$ is surviving if every orbit of edges is surviving.
\end{defn}

\noindent A particularly well-behaved subclass of deformation spaces are the \emph{non-ascending} ones. We first need a definition.

\begin{defn}[{see \cite[Section 1.2]{GuirardelLevitt}}]
    A deformation space $\mathcal D$ is \emph{irreducible} if there exists $(T,\Omega)\in \mathcal D$ and two loxodromic elements for the action $\Omega$ whose commutator is again loxodromic.
\end{defn}

\begin{defn}\label{def:nonAscending}
    A deformation space $\mathcal{D}$ is \emph{non-ascending} if it is irreducible and there is no $G$-tree $(T,\Omega) \in \mathcal{D}$ containing an edge $e$ with the following properties:
    \begin{enumerate}
        \item Both endpoints of $e$ are in the same $G$-orbit,
        \item The subgroup $\Stab{\Omega}{e}$ is equal to the stabiliser of one endpoint and properly contained in the other.
    \end{enumerate}
\end{defn}

\noindent The following theorem explains our interest in non-ascending deformation spaces:
\begin{thm}\label{thm:GLSurvivingPath}
Let $\mathcal{D}$ be a non-ascending deformation space, and let $\mathcal{F}$ be the subcomplex of $\mathcal{D}$ spanned by surviving trees. For every $T_1, T_2\in \mathcal F$, there exists an elementary deformation from $T_1$ to $T_2$ in which, at each step, the resulting tree is surviving.
\end{thm}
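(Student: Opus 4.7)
The strategy is to invoke Forester's Theorem~\ref{thm:forester} to obtain \emph{some} elementary deformation between $T_1$ and $T_2$, and then modify it so that every intermediate tree is surviving. The non-ascending hypothesis is the key ingredient enabling this modification.

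First, I would reduce to the case where $T_1$ and $T_2$ are reduced. If $T_i$ is surviving but not reduced, the plan is to show that one can find a collapse sequence from $T_i$ down to a reduced tree $R_i$ passing only through surviving trees. This requires the observation that if $T$ is surviving and $T'$ is obtained from $T$ by a suitable elementary collapse, then $T'$ inherits the surviving property: each edge orbit of $T'$ lifts to an orbit of $T$, and one must argue the corresponding collapse sequences from $T$ that spare those orbits can be chosen to factor through $T'$. Once this is established, it suffices to connect the reduced trees $R_1$ and $R_2$ within the subcomplex $\mathcal{F}$.

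The heart of the argument is then to show that any two reduced trees $R_1, R_2 \in \mathcal{D}$ are joined by an elementary deformation through surviving trees. For this step I would appeal to the structure of reduced trees in non-ascending deformation spaces: any two reduced trees are related by a sequence of \emph{slide moves}, where an edge orbit is slid across an adjacent one whose stabiliser contains it. Each such slide may be realised as an elementary expansion followed by an elementary collapse, and the intermediate refinement $T_+$ collapses onto both the source $R$ and the target $R'$. This $T_+$ is automatically surviving, because its edge orbits split into those descending to edges of $R$ (survived by the collapse to $R$) and the single extra orbit collapsed to produce $R'$ (survived by the collapse to $R'$). Hence slides move us entirely through $\mathcal{F}$.

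The main obstacle is verifying that any elementary deformation between reduced trees decomposes into slides. This is precisely where the non-ascending hypothesis enters: in its absence, one could be forced to introduce an expansion along an edge whose endpoints lie in the same $G$-orbit with strictly nested stabilisers (the configuration explicitly forbidden in Definition~\ref{def:nonAscending}), and no matching collapse would restore us to a surviving tree. The non-ascending assumption rules out exactly these configurations, which allows the slide decomposition to go through and completes the proof.
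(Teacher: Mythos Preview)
Your approach is considerably more elaborate than the paper's. The paper simply observes that an elementary deformation through surviving trees is the same thing as a path in $\mathcal{F}$, and then cites \cite[Theorem~7.6]{GL_def_trees}, which states that $\mathcal{D}$ deformation retracts onto $\mathcal{F}$; since $\mathcal{D}$ is connected by construction, so is $\mathcal{F}$, and the result follows. In other words, the entire content is outsourced to Guirardel--Levitt.

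What you have sketched is, in effect, a partial reproof of the connectedness part of that Guirardel--Levitt theorem. Your step on slides (reduced trees in a non-ascending space are connected by slide moves, and the intermediate expansion in a slide is surviving because it collapses to both reduced neighbours) is correct, and indeed the paper uses exactly this slide theorem \cite[Theorem~7.2]{GL_def_trees} elsewhere. So you have not avoided citing Guirardel--Levitt; you have traded one citation for another plus extra work.

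The genuine gap is in your first reduction. You assert that if $T$ is surviving and $T'$ is obtained by collapsing a suitable edge $e$, then the collapse sequences from $T$ witnessing that each remaining edge $f$ survives ``can be chosen to factor through $T'$''. This is not automatic: the collapse $T\to R_f$ sparing $f$ need not collapse $e$ at all (both $e$ and $f$ may lie in the reduced tree $R_f$), in which case it certainly does not factor through $T'=T/e$, and there is no obvious reduced tree below $T'$ containing $f$. Making this step honest requires real work---essentially the content of the Guirardel--Levitt retraction---so the cleanest fix is simply to cite their result, as the paper does.
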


\begin{proof} Such an elementary deformation is a path from $T_1$ to $T_2$ in $\mathcal F$, so we have to show that $\mathcal F$ is connected. This follows from the fact that $\mathcal D$ is connected by construction, and deformation retracts onto $\mathcal{F}$ by \cite[Theorem 7.6]{GL_def_trees}.
\end{proof}

\noindent We conclude the section with a sufficient condition for a deformation space to be non-ascending:
\begin{lemma}\label{lem:treeImpliesNonAscending}
    Suppose $\mathcal{D}$ is an irreducible deformation space containing a $G$-tree~$T$ such that $T/G$ is a tree. Then $\mathcal{D}$ is non-ascending.
\end{lemma}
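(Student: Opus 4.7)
The plan is to show directly that, under the hypothesis, no $G$-tree in $\mathcal{D}$ contains an edge of the kind described in Definition~\ref{def:nonAscending}; since irreducibility is assumed, this suffices.

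The key observation is that the property ``$T/G$ is a tree'' is preserved under elementary deformations. For an elementary collapse along an edge $e=\{u,v\}$ with $u,v$ in distinct $G$-orbits, the image $\bar e$ in the quotient is an honest (non-loop) edge, and the quotient of the collapsed tree is obtained from $T/G$ by contracting $\bar e$; contracting an edge of a tree yields a tree. Elementary expansions are inverse to collapses, so on the level of quotients they subdivide a vertex into two vertices joined by a new edge, again preserving the tree property. By induction on the length of an elementary deformation and using that, by Definition~\ref{defn:deformation space}, every $T' \in \mathcal{D}$ is related to $T$ by such a sequence, one concludes that $T'/G$ is a tree for every $T' \in \mathcal{D}$.

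The rest is immediate. Any ascending edge as in Definition~\ref{def:nonAscending} has both endpoints in the same $G$-orbit, hence projects to a loop in $T'/G$. Since $T'/G$ is a tree, no such edge can exist in any $T' \in \mathcal{D}$. Combining this with the irreducibility assumption yields that $\mathcal{D}$ is non-ascending.

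I do not foresee a substantive obstacle: the entire argument reduces to checking the two quotient-level descriptions of elementary collapse and expansion (edge contraction and vertex subdivision), which are standard and keep the tree property invariant.
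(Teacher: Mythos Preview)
Your proposal is correct and follows essentially the same line as the paper. The only difference is that the paper invokes the invariance of the Betti number of the quotient across a deformation space (citing \cite[Section~4]{GL_def_trees}) rather than verifying by hand, as you do, that elementary collapses and expansions preserve the property ``$T/G$ is a tree''; your argument is thus a self-contained special case of that cited fact.
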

\begin{proof}
    Towards a contradiction, suppose that $T'$ is a $G$-tree with an edge $e$ as forbidden by Definition \ref{def:nonAscending}. Then the image of $e$ in $T'/G$ is a loop, and in particular the latter is not a tree. Furthermore, the \emph{Betti number} of a $G$-tree (i.e. the rank of the fundamental group of the quotient) is a constant across all trees in a deformation space \cite[Section 4]{GL_def_trees}; hence no $T\in \mathcal{D}$ is such that $T/G$ is a tree, against the hypothesis.
\end{proof}

We can rephrase the requirement on $T/G$ in the above Lemma with the existence of a suitable fundamental domain for the $G$-action.
\begin{defn}[combinatorial fundamental domain]\label{defn:combfunddom}
    Let $(T,\Omega)$ be a $G$-tree; a \emph{combinatorial fundamental domain} is a subtree $K\subseteq T$ such that:
    \begin{enumerate}
        \item for every vertex $v\in\ver T$, $\abs{G\cdot v\cap \ver K}=1$;
        \item for every edge $e\in \edge T$, $\abs{G\cdot e\cap K}=1$.
    \end{enumerate}
\end{defn}

\begin{lemma}\label{lem:cfdTreeQuotient}
    A $G$-tree $(T, \Omega)$ admits a combinatorial fundamental domain if and only if $T/G$ is a tree.
\end{lemma}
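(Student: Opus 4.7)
The plan is to prove the two directions separately, with the reverse direction being the more substantial one. Both hinge on showing that the quotient map $\phi\colon T\to T/G$ restricts to a simplicial isomorphism between a candidate fundamental domain $K\subseteq T$ and the quotient $T/G$.

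For the forward direction, suppose $K\subseteq T$ is a combinatorial fundamental domain. I would first observe that conditions (1) and (2) of Definition~\ref{defn:combfunddom} say exactly that $\phi$ restricted to $K$ induces bijections $\ver K\to \ver (T/G)$ and $\edge K\to \edge (T/G)$. Since $\phi$ is a simplicial map, it is in particular a graph homomorphism, and a graph homomorphism which is bijective on vertices and edges is a graph isomorphism. Hence $\phi|_K\colon K\to T/G$ is a simplicial isomorphism, and because $K$ is by hypothesis a (non-empty) subtree of $T$, the quotient $T/G$ is a tree.

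For the reverse direction, assume $T/G$ is a tree. I would construct a combinatorial fundamental domain $K$ as the image of a simplicial section $s\colon T/G\to T$ of $\phi$. Fix a vertex $v_0'\in\ver(T/G)$ and a lift $v_0\in\phi^{-1}(v_0')$, and apply Zorn's lemma to the poset of pairs $(L,s_L)$ where $L\subseteq T/G$ is a subtree containing $v_0'$ and $s_L\colon L\to T$ is a simplicial embedding with $\phi\circ s_L=\mathrm{id}_L$ and $s_L(v_0')=v_0$. The union of a chain of such pairs is again such a pair, so a maximal element $(L^*,s^*)$ exists. If $L^*\neq T/G$, pick an edge $e'$ of $T/G$ with exactly one endpoint $u'\in L^*$; by $G$-equivariance of $\phi$, after applying an element of $\Stab{G}{s^*(u')}$ if necessary, any preimage of $e'$ in $T$ can be translated so that it is incident to $s^*(u')$, yielding an extension of $s^*$ over $L^*\cup\{e'\}$ and contradicting maximality. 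Hence $L^*=T/G$, and $K\coloneq s^*(T/G)$ is a subtree of $T$. Finally, I would check that conditions (1)--(2) of Definition~\ref{defn:combfunddom} hold: the existence part follows from $\phi\circ s^*=\mathrm{id}$ (so every vertex and edge orbit of $T$ meets $K$), while uniqueness follows from the injectivity of $s^*$ (two vertices or edges of $K$ in the same $G$-orbit have equal image under $\phi$, hence are equal).

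The step I expect to need the most attention is verifying the extension in the Zorn's lemma argument, which is where we use that the action has no edge inversions: it guarantees that a lift of $e'$ incident to $s^*(u')$ in $T$ can be chosen without ambiguity about which endpoint is mapped to $u'$. Once this is settled, the rest of the argument is bookkeeping.
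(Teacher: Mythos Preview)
Your proposal is correct and follows essentially the same approach as the paper: the paper's proof is a two-sentence sketch (``any lift of the quotient is a combinatorial fundamental domain'' for one direction, and ``$T/G\cong K$ since orbits are in bijection with $K$'' for the other), and you have filled in the details of exactly this argument. One small wording issue: in the extension step you speak of ``applying an element of $\Stab{G}{s^*(u')}$'', but what you actually need is an arbitrary element of $G$ carrying the chosen endpoint of a preimage of $e'$ to $s^*(u')$; the stabiliser plays no role there. Also, since in the paper's applications $G$ is finitely generated and the action is minimal, $T/G$ is finite and a simple induction would suffice in place of Zorn's lemma---but your argument is not wrong, just more general than needed.
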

\begin{proof}
    If $T/G$ is a tree then any lift of the quotient to $T$ is a combinatorial fundamental domain. Conversely, if $T$ admits a combinatorial fundamental domain $K$ then $T/G\cong K$ is a tree, since the orbits of $T$ are in bijection with $K$.
\end{proof}

\section{Background on Artin groups}\label{sec:cyclicsplit}
In this section, we recall some well known facts on Artin groups, and results from \cite{JonManSar_JSJSpl} which we will need. We also introduce a deformation space for Artin groups, depending only on the union of the conjugacy classes of the generating set. \par 

\begin{defn}\label{defn:artin}
Given a finite, labelled simplicial graph $\Gamma$, let $A_\Gamma$ be the associated Artin group, as in the Introduction. Given a group $A$ and a finite subset $S$, we say that $(A,S)$ is an \emph{Artin system} if $A \cong A_{\Gamma_S}$ for some graph $\Gamma_S$, and the isomorphism sends $S$ bijectively to the vertices of $\Gamma_S$. We say that $S$ is an \emph{Artin generating set} of $A$, and that $\Gamma_S$ is the associated \emph{defining graph} or \emph{Coxeter graph}. Two Artin systems $(A,S)$ and $(A',S')$ are isomorphic if there is an isomorphism $A\cong A'$ mapping $S$ to $S'$. 
\end{defn}

\begin{rem}
The defining graph of an Artin system is well defined up to labelled graph isomorphism. As such, we will freely talk about subgraphs spanned by generators in $S$, implicitly identifying them with vertices in $\Gamma_S$.
\end{rem}

\noindent In the sequel, we will freely make use of the following lemma.

\begin{lemma}\label{lem:isomorphismspreserveags}
    Suppose $(A,S)$ is an Artin system and $\psi : A \rightarrow A'$ is an isomorphism. Then $(A', \psi(S))$ is an Artin system, and $\Gamma_{\psi(S)} \cong \Gamma_S$.
\end{lemma}
\begin{proof}
    By definition of an Artin system, there is an isomorphism $\tau: A_{\Gamma_S} \rightarrow A$ sending the generators of $A_{\Gamma_S}$ corresponding to vertices of $\Gamma_S$ to $S$. Hence, $\psi\tau: A_{\Gamma_S} \rightarrow A'$ is an isomorphism sending the vertices of $\Gamma_S$ to $\psi(S)$. Hence $(A', \psi(S))$ is an Artin system, and $\Gamma_{\psi(S)} \cong \Gamma_S$ as required.
\end{proof}

\begin{defn}\label{defn:dihedral}
If an Artin system $(A,S)$ is such that $\Gamma_S$ is an edge with label $m\ge 3$, then $A$ is called a \emph{dihedral Artin group}. We will call dihedral Artin groups odd or even depending on if $m$ is odd or even. If $S=\{a,b\}$, the centre of $A$ is infinite cyclic, generated by 
\[z_{ab}=\begin{cases}
\Delta_{ab}     &\mbox{ if $m$ is even};\\
\Delta_{ab}^2   &\mbox{ if $m$ is odd},
\end{cases}\]
where $\Delta_{ab}=\mathrm{prod}(a,b,m_{ab})$ is the \emph{Garside element} (see e.g. \cite{BrieskornSaito}).
\end{defn}

\begin{rem}\label{rem:oddConjugation}
    Notice that if $A_S$ is an odd dihedral Artin group, then with the above notation $\Delta_{ab}$ conjugates $a$ to $b$ and vice versa.
\end{rem}

\begin{defn}
Given an Artin system $(A,S)$, a \emph{standard parabolic subgroup} (with respect to $S$) is a subgroup generated by some $U \subseteq S$, which we denote by $A_U$. By a theorem of Van der Lek, $A_U \cong A_{\Gamma_U}$, where $\Gamma_U$ is the induced subgraph of $\Gamma_S$ spanned by the vertices in $U$, and $(A_U,U)$ is an Artin system \cite[Theorem~4.13]{VanDerLek}. We say a subgroup of $A$ is a \emph{parabolic subgroup} (still with respect to $S$) if it is conjugate to a standard parabolic subgroup. We will often write an $S$-parabolic subgroup to mean a parabolic subgroup with respect to $S$.
\end{defn}

\begin{lemma}\cite[Corollary 4.2]{Pparabolics}\label{lem:conjugatingStandardGenerators}
    Suppose $(A,S)$ is an Artin system and~$s, t \in S$. Suppose further that $gsg^{-1} = t$ for some $g \in A$. Then there is a sequence $\{s_i\}_{i=0}^k\subseteq S$ such that:
    \begin{enumerate}
        \item $s_0=t$, $s_k=s$, and for every $i\in\{0,\dots,k-1\}$, $\{s_i,s_{i+1}\}$ spans an odd dihedral Artin $S$-parabolic subgroup;
        \item $g\in\centre{A}{t}\Delta_{s_0s_1}\cdots\Delta_{s_{k-1}s_k}$, where $\centre{A}{t}$ denotes the centraliser of $t$.
    \end{enumerate}
\end{lemma}
\begin{proof}
    The first point is \cite[Corollary 4.2]{Pparabolics} as stated. The second point follows from the first: indeed, by Remark \ref{rem:oddConjugation}, $$(\Delta_{s_0s_1}\cdots\Delta_{s_{k-1}s_k})^{-1} t  (\Delta_{s_0s_1}\cdots\Delta_{s_{k-1}s_k}) = s;$$ hence $g(\Delta_{s_0s_1}\cdots\Delta_{s_{k-1}s_k})^{-1} \in \centre{A}{t}$, and the result follows by right-multiplication by  $\Delta_{s_0s_1}\cdots\Delta_{s_{k-1}s_k}$.
\end{proof}

\subsection{\texorpdfstring{$\Z$}{Z}-splittings of Artin groups} In \cite{JonManSar_JSJSpl}, we characterise when an Artin group splits over virtually cyclic subgroups.

\begin{rem}[One-endedness]\label{rem:one_end}
An Artin group $A$ is one-ended if and only if some (equivalently, any) $\Gamma$ such that $A\cong A_\Gamma$ is connected and has at least two vertices (see e.g. \cite[Remark~2.6]{JonManSar_JSJSpl} for further details). 
\end{rem}

\begin{defn}\label{defn:separating}
    We say that subgraph $\Lambda$ of a simplicial graph $\Gamma$ is \emph{separating}, or that it \emph{separates} $\Gamma$, if the subgraph of $\Gamma$ spanned by $\ver{\Gamma}-\ver{\Lambda}$ is disconnected. If $\Lambda$ consists of a single vertex $v$, we say that $v$ is a \emph{separating vertex}. 
\end{defn}
\noindent The presence of a separating vertex (almost) fully characterises when an Artin group splits over $\Z$, in the sense of Definition~\ref{def_splitoverZ}:

\begin{thm}[{\cite[Theorem A]{JonManSar_JSJSpl}}]\label{thm:no_split}
    Let $A$ be a one-ended Artin group. Then $A$ splits over $\Z$ if and only if, for some (equivalently every) Artin system $(A,S)$,
    \begin{itemize}
        \item $|S|=2$, or
        \item $\Gamma_S$ has a separating vertex.
    \end{itemize}
\end{thm}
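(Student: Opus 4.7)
The statement is a biconditional about an intrinsic property of $A$, so it suffices to prove both implications for an arbitrary but fixed Artin system $(A,S)$; the ``some iff every'' clause then follows automatically. My plan is to treat each direction separately, and the forward (constructive) direction is almost immediate.

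\emph{Constructing splittings.} If $v\in S$ separates $\Gamma_S$ into components $C_1,\dots,C_k$, the Artin presentation together with van der Lek's theorem produces the visual iterated amalgam
\[
A \;\cong\; A_{C_1\cup\{v\}} *_{\langle v\rangle} \cdots *_{\langle v\rangle} A_{C_k\cup\{v\}},
\]
which is a non-trivial $\Z$-splitting since $\langle v\rangle\cong\Z$ and each $A_{C_i\cup\{v\}}$ properly contains $\langle v\rangle$ (as $C_i\ne\emptyset$). If $|S|=2$ with label $m\ge 2$, then either $m=2$ and $A\cong\Z^2$, which splits as the HNN extension $\Z*_{\Z}$ with trivial stable letter action, or $m\ge 3$ and $A$ is dihedral; in the latter case the infinite cyclic centre $\langle z_{ab}\rangle$ of Definition~\ref{defn:dihedral} furnishes a $\Z$-splitting, because $A/\langle z_{ab}\rangle$ is a non-trivial free product of finite cyclic groups, which lifts to a central amalgam over $\langle z_{ab}\rangle$.

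\emph{Ruling out splittings.} Assume now $|S|\ge 3$ and $\Gamma_S$ is connected with no separating vertex; I claim $A$ admits no non-trivial minimal action on a tree with infinite cyclic edge stabilisers. Under these hypotheses there is no visual $\Z$-splitting: no proper subset $Y\subseteq S$ that generates a cyclic subgroup separates $\Gamma_S$, because $\Gamma_S$ is its own (unique) big chunk. The strategy is to reduce a hypothetical $\Z$-splitting $T$ of $A$ to a visual one via Forester's Theorem~\ref{thm:forester}. Concretely, I would show that every standard generator $s\in S$ is elliptic in $T$, so that every standard parabolic is elliptic too. Theorem~\ref{thm:forester} then places $T$ in the same deformation space as the trivial tree (the only tree whose collection of elliptic subgroups contains $A$), contradicting the non-triviality of $T$.

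\emph{Main obstacle.} The technical heart of the argument is establishing ellipticity of every standard generator in any cyclic splitting. A loxodromic generator would produce a hidden $\Z$-splitting not arising from a separating vertex, and ruling this out is delicate: one needs to exploit the structure of the two-generator standard parabolics (whose centralisers and commensurators are well understood thanks to the centre described in Definition~\ref{defn:dihedral}) to propagate ellipticity along edges of $\Gamma_S$, and then use the absence of separating vertices to conclude that all of $S$ is elliptic simultaneously. This step essentially amounts to a piece of the JSJ construction for Artin groups carried out in \cite{JonManSar_JSJSpl}; once it is in place, Forester's theorem and the absence of visual $\Z$-splittings immediately close the argument.
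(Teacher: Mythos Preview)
The paper does not prove this statement at all: Theorem~\ref{thm:no_split} is quoted verbatim as Theorem~A of the authors' earlier paper \cite{JonManSar_JSJSpl}, with no argument given here. There is therefore no ``paper's own proof'' to compare your proposal against.

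As for the proposal itself, the forward direction is fine and standard. For the reverse direction your outline is reasonable in spirit, but note a gap in the logic as written: from ``every $s\in S$ is elliptic in $T$'' you jump to ``every standard parabolic is elliptic too'', and hence $A$ itself is elliptic. Ellipticity of each generator does not formally imply a common fixed point; you would need to argue edge-by-edge that adjacent generators have intersecting fixed-point sets (using the centre of the dihedral parabolic and the constraint that edge stabilisers are cyclic), and then use $2$-connectivity of $\Gamma_S$ to glue these local fixed points into a global one. You essentially acknowledge this by saying the step ``amounts to a piece of the JSJ construction \dots\ in \cite{JonManSar_JSJSpl}''. That is accurate, and it is exactly why the present paper imports the result rather than reproving it.
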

\noindent Under the hypothesis of the second bullet of the theorem, there exist $U,V\subsetneq S$ such that $\Gamma_U\cap\Gamma_V$ is a vertex $s\in S$ while $U\cup V=S$. Hence $A=A_U*_{\langle s\rangle}A_V$, and we call such a decomposition a \emph{visual splitting} over a separating vertex.

Theorem~\ref{thm:no_split} motivates the following definition.

\begin{defn}\label{def:bigchunk}
    Given a graph $\Gamma$, a \emph{1--chunk} (also called \emph{big chunk} in \cite{JonManSar_JSJSpl}) is a connected induced subgraph of $\Gamma$ without separating vertices, which is maximal (with respect to inclusion) with these properties. If $\Gamma=\Gamma_S$ for some Artin system $(A,S)$, a \emph{1--chunk parabolic subgroup} is a subgroup of $A$ conjugated to some $A_U$, where $U \subseteq S$ spans a 1--chunk in $\Gamma_S$.
\end{defn}

\noindent The next Theorem allows us to speak of the number and isomorphism types of 1--chunks parabolic subgroups of an Artin group, without any reference to Artin generating sets. We only state the subset of the conclusions that we shall need.
\begin{thm}[{\cite[Theorem~5.6]{JonManSar_JSJSpl}}]\label{thm:iso_invariance}
     Let $\Gamma$ and $\Gamma'$ be finite, connected labelled simplicial graphs. Let $\BC(\Gamma)$ be the set of 1--chunks of $\Gamma$, and define $\BC(\Gamma')$ analogously. Let $\phi\colon A_\Gamma\to A_{\Gamma'}$ be an isomorphism which maps standard generators of $\Gamma$ to conjugates of standard generators of $\Gamma'$. Then there exists a bijection $\phi_\#\colon \BC(\Gamma)\to \BC(\Gamma')$ such that, for every $\Lambda\in \BC(\Gamma)$, $A_{\phi_\#(\Lambda)}$ is a conjugate of $\phi(A_{\Lambda})$, and in particular $A_{\Lambda}\cong A_{\phi_\#(\Lambda)}$.
\end{thm}

\noindent We will use this theorem only in the form of the following immediate corollary. For an Artin system $(A,S)$, call $R_S \coloneq  \{s^a \mid s \in S, a \in A\}$ the \emph{set of reflections}.

\begin{cor}\label{cor:same_parabolics}
    Let $A$ be an Artin group and $S, U \subseteq A$ be Artin generating sets such that $R_S = R_U$. Then $H \le A$ is a 1--chunk parabolic subgroup of $(A,S)$ if and only if it is a 1--chunk parabolic subgroup of $(A,U)$. 
\end{cor}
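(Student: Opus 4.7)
The plan is to apply Theorem~\ref{thm:iso_invariance} to the identity map $\phi\coloneq \mathrm{id}_A$, viewed as an isomorphism between $A_{\Gamma_S}$ and $A_{\Gamma_U}$ under the two identifications of $A$ coming from the Artin systems $(A,S)$ and $(A,U)$. The extra hypothesis needed to invoke the ``moreover'' clause of the theorem --- that $\phi$ sends every standard generator of $\Gamma_S$ to a conjugate of a standard generator of $\Gamma_U$ --- is immediate from $R_S=R_U$: any $s\in S$ lies in $R_S=R_U$, hence is of the form $u^a$ for some $u\in U$ and $a\in A$.

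Applying Theorem~\ref{thm:iso_invariance} then produces a bijection $\phi_\#\colon \BC(\Gamma_S)\to \BC(\Gamma_U)$ with the property that, for every $\Lambda\in \BC(\Gamma_S)$, the subgroup $A_{\phi_\#(\Lambda)}$ is an $A$-conjugate of $\phi(A_\Lambda)=A_\Lambda$. Since by definition a big chunk parabolic of $(A,S)$ is precisely an $A$-conjugate of some $A_\Lambda$ with $\Lambda\in \BC(\Gamma_S)$, this bijection shows that the collection of big chunk parabolics of $(A,S)$ coincides with the one for $(A,U)$; the reverse direction is already contained in the fact that $\phi_\#$ is a bijection (alternatively, one may swap the roles of $S$ and $U$ and rerun the argument).

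I do not foresee any real obstacle, since the corollary is essentially a repackaging of the ``moreover'' clause in Theorem~\ref{thm:iso_invariance}. The only mild point to address is that Theorem~\ref{thm:iso_invariance} requires $\Gamma_S$ and $\Gamma_U$ to be connected; however, connectedness of the defining graph is an isomorphism invariant of $A$ (being equivalent, for $|S|\geq 2$, to $A$ being one-ended, cf.~Remark~\ref{rem:one_end}), so it holds for one system if and only if it holds for the other, while the case $|S|=1$ reduces trivially to $A\cong \Z$.
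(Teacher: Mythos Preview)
Your proposal is correct and follows exactly the paper's approach: the paper's proof is the single sentence ``This follows by applying the `moreover' part of Theorem~\ref{thm:iso_invariance} to the isomorphism $A_{\Gamma_S} \cong A \cong A_{\Gamma_U}$.'' Your added remark about the connectedness hypothesis of Theorem~\ref{thm:iso_invariance} is a reasonable bit of extra care (though note that your dismissal of the disconnected case via $|S|=1$ does not cover free products with $|S|\ge 2$; in practice the corollary is only invoked in the one-ended setting, so this is harmless).
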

\begin{proof}
    This follows by applying Theorem \ref{thm:iso_invariance} to the isomorphism $A_{\Gamma_S} \cong A \cong A_{\Gamma_U}$.
\end{proof}

\subsection{A deformation space for Artin groups}\label{sec:defspace_for_artin}
We now describe a deformation space for an Artin system $(A,S)$, only depending on the set of reflections $R_S$, whose trees roughly correspond to maximal visual splittings over separating vertices. Towards proving Theorem~\ref{thmintro:refrigidreduction}, we restrict our attention to one-ended Artin groups.

\begin{defn}[{see \cite[Definition 5.2]{JonManSar_JSJSpl}}]\label{def:crushed_tree}
    Let $A$ be a one-ended Artin group. For every Artin system $(A,S)$ let $\mathcal M_S$ be the graph-of-groups decomposition of $A$ defined as follows.
    \begin{itemize}
        \item The underlying graph $Y$ of $\mathcal M_S$ has one black vertex for every 1--chunk of $\Gamma_S$, and one white vertex for every separating vertex of $\Gamma_S$. 
        \item The vertex group associated to a black vertex (henceforth, a \emph{black vertex group}) is the corresponding standard 1--chunk parabolic subgroup, while the vertex group associated to a white vertex (henceforth, a \emph{white vertex group}) is generated by the corresponding separating vertex, seen as an element of~$S$.
        \item There is an edge between a white vertex and a black vertex if the corresponding separating vertex belongs to the corresponding 1--chunk. The edge group is the same as the white vertex group, which embeds in the black vertex group via the subgraph embedding.
        \item By its construction, it is straightforward to check that $Y$ is a finite tree. 
    \end{itemize} 
    See Figure~\ref{fig:ovJ} for an example. Let $M_S$ be the Bass--Serre tree of $\mathcal M_S$, which is well-defined up to equivariant isometry, as two identifications of $S$ with $\Gamma_S$ can only differ by a graph automorphism of $\Gamma_S$, which descends to an equivariant isometry of $M_S$. Hence the deformation space of $M_S$ is well-defined, and we denote it by~$\crush{D}{S}$.
\end{defn}

\begin{figure}[htp]
\includegraphics[width=\textwidth, alt={Example of how to construct the graph-of-groups decomposition, which has a black vertex for every 1--chunk, stabilised by the associated parabolic subgroup, and a white vertex for every separating vertex, which is only fixed by the cyclic subgroup generated by the vertex.}]{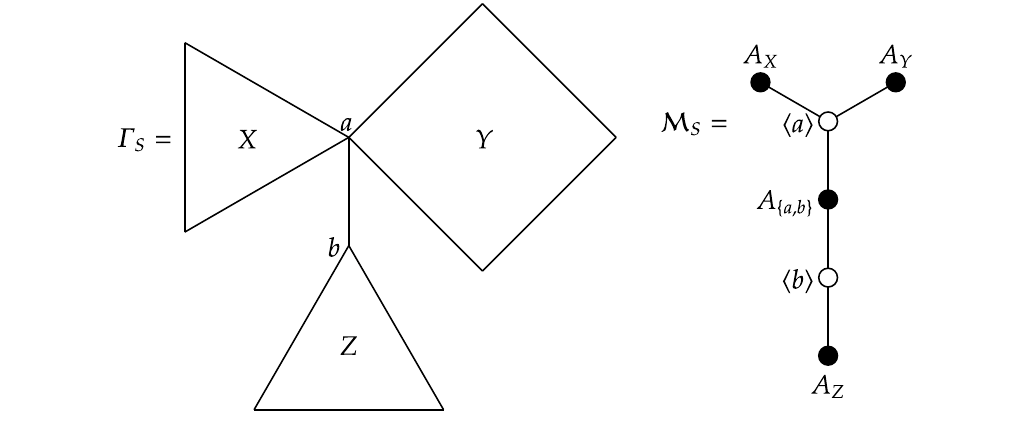}
\caption{Let $(A,S)$ be an Artin system with defining graph $\Gamma_S$ as above (this is the graph from Figure~\ref{fig:strongtwist_new_example}). The separating vertices are $a$ and $b$, and the subsets $X, Y,Z,\{a,b\}\subset S$ each span a 1--chunk in $\Gamma_S$. The decomposition $\mathcal M_S$ has one white vertex for every separating vertex, and one black vertex for every 1--chunk. Notice that every $s\in S$ belongs to some vertex group of $\mathcal M_S$.}
\label{fig:ovJ}
\end{figure}

\begin{rem}[$M_S$ is surviving]\label{rem:JS_survives}
Every orbit of edges in $M_S$ meets the \emph{natural lift} of $\mathcal{M}_S$ (i.e. the subtree of $M_S$ that is isomorphic to $Y$ and whose vertex and edge stabilisers equal the vertex and edge groups in $\mathcal M_S$) exactly once. Hence, under the identification between $Y$ and the natural lift, we can define an elementary collapse directly on the level of the graph-of-groups $\mathcal{M}_S$.

Now, every edge of $\mathcal M_S$ corresponds to a pair $\{v,\Lambda\}$, where $\Lambda$ is a 1--chunk of $\Gamma_S$ and $v\in \Lambda$ separates $\Gamma_S$. If for every such $v$ we collapse one of the adjacent edges of $\mathcal M_S$, we get a graph-of-groups whose Bass-Serre tree is reduced, as two different 1--chunks parabolic subgroups are not contained one in the other. Furthermore, every separating vertex belongs to at least two 1--chunks, so for every edge $e$ of $M_S$ one can find a collapse as above where $e$ survives. See Figure~\ref{fig:JS_survives} for an example.
\end{rem}

\begin{figure}[htp]
    \centering
    \includegraphics[width=\textwidth, alt={Example of how to choose a sequence of collapses to make every given edge survive.}]{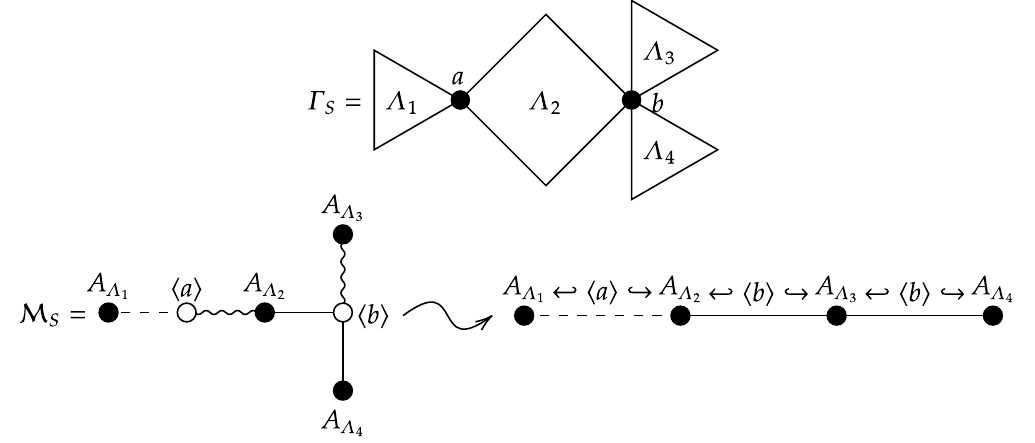}
    \caption{Each $\Lambda_i$ is a 1--chunk of $\Gamma_S$. If we fix an edge of $M_S$ (here, the dashed line represents the corresponding edge in the graph of groups $\mathcal M_S$), we can always collapse one edge of $\mathcal M_S$ for every white vertex (here, the collection of undulated lines) to get a reduced Bass--Serre tree where the orbit of the fixed edge survives.}
    \label{fig:JS_survives}
\end{figure}

\begin{lemma}\label{lem:crushedDependsOnReflections}
    Let $S$ and $U$ be Artin generating sets of $A$ such that $R_S = R_{U}$. Then $\crush{D}{S} = \crush{D}{U}$.
\end{lemma}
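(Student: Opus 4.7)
The strategy is to use Forester's Theorem~\ref{thm:forester}: two cocompact $G$-trees belong to the same deformation space if and only if they have the same elliptic subgroups. Since $\crush{D}{S}$ is by definition the deformation space containing $M_S$, and likewise for $\crush{D}{U}$, it suffices to prove that $M_S$ and $M_U$ share the same set of elliptic subgroups. Cocompactness of both trees follows from minimality together with the fact that $A$ is finitely generated (see the remark after Notation~\ref{notation:action_on_trees}).

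First I would identify the elliptic subgroups of $M_S$. By construction, vertex stabilisers in $M_S$ fall into two families: the conjugates of big chunk parabolics (coming from black vertices), and the conjugates of cyclic subgroups $\langle s \rangle$ where $s$ is a separating vertex of $\Gamma_S$ (coming from white vertices). Since every separating vertex lies in at least one big chunk, each white vertex group is contained in an adjacent black vertex group; consequently, the collection of elliptic subgroups of $M_S$ is exactly the set of subgroups of conjugates of big chunk parabolics of $(A,S)$. The same description applies to $M_U$ with $U$ in place of $S$.

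Now I would invoke Corollary~\ref{cor:same_parabolics}: since $R_S = R_U$, the set of big chunk parabolic subgroups of $(A,S)$ coincides with the set of big chunk parabolic subgroups of $(A,U)$ as subgroups of $A$. This equality of collections is preserved under passing to conjugates (which stay within the collection) and to subgroups, so the elliptic subgroups of $M_S$ and $M_U$ are identical. Applying Theorem~\ref{thm:forester} concludes that $M_S$ and $M_U$ lie in a common deformation space, hence $\crush{D}{S} = \crush{D}{U}$.

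I do not expect any serious obstacle here; the proof is essentially a matching-up exercise. The one subtlety worth flagging explicitly is the reduction from elliptic subgroups to big chunk parabolics, which relies on the containment of white vertex groups inside black vertex groups. Everything else is a direct application of the tools already recorded in Section~\ref{sec:background} and the earlier parts of Section~\ref{sec:cyclicsplit}.
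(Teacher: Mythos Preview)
Your proposal is correct and follows essentially the same route as the paper: reduce via Forester's Theorem~\ref{thm:forester} to showing that $M_S$ and $M_U$ have the same elliptic subgroups, identify these as the subgroups of big chunk parabolics, and conclude by Corollary~\ref{cor:same_parabolics}. You are simply more explicit about the white-vertex containment and cocompactness, which the paper leaves implicit.
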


\begin{proof}
    We will use the characterisation from Theorem~\ref{thm:forester}, hence it suffices to check that $M_S$ and $M_U$ have the same elliptic subgroups. In turn, by construction elliptic subgroups of $M_S$ are precisely the subgroups of 1--chunk parabolic subgroups of~$S$, and similarly for $U$; hence the lemma follows from Corollary~\ref{cor:same_parabolics}.
\end{proof}

\noindent In view of the Lemma, from now on we shall refer to $\crush{D}{S}$ as $\crush{D}{R}$ where $R=R_S$, in all situations where only the dependence on the reflection set is relevant.

\section{Reducing the twist conjecture}
\label{sec:red twist conj}
This section is devoted to the proof of Theorem~\ref{thmintro:refrigidreduction}. The theorem will follow from an intermediate result involving a generalised version of the strong twist conjecture, which is the main technical tool of this paper (see Theorem~\ref{thm:refrigidreduction}). We first describe some tools to produce new Artin generating sets, namely elementary twists and Dehn Twists.

\subsection{Elementary twists}
For the next definition, we say that an Artin system $(A,S)$ is \emph{indecomposable} if $S$ cannot be partitioned into two non-empty, disjoint subsets $Y,Z$ such that $m_{yz}=2$ for every $y\in Y$ and $z\in Z$. We also say that an Artin system $(A,S)$ is of \emph{spherical type} if the associated \emph{Coxeter group} $A/\nspan{s^2\mid s\in S}$ is finite.
\begin{defn}[{Garside element, \cite{garside}}]
If $(A,S)$ is of spherical type and indecomposable, there is a distinguished element $\Delta_S\in A$, which we call the \emph{Garside element}. For our purposes, it is enough to know that:
\begin{itemize}
\item If $S=\{a\}$, then $\Delta_S=a$.
\item If $S=\{a,b\}$, then $\Delta_S=\Delta_{ab}$ is as in Definition~\ref{defn:dihedral}.
\end{itemize}
\end{defn}
\noindent For the sake of light notation, we will often write $\Delta_{ab}$ instead of $\Delta_{\{a,b\}}$ for the Garside element associated to a dihedral Artin group, as in Definition \ref{defn:dihedral}.

\begin{defn}[Twist equivalence]\label{defn:twist_eq}
    Let $(A,S)$ be an Artin system, and let $J\subseteq S$ be such that $(A_J,J)$ is of spherical type and indecomposable. Let $J^\bot$ be the generators in $S-J$ which commute with $J$. Suppose that $S-(J\cup J^\bot)$ is a disjoint union $B\sqcup C$, where $B$, $C$ are non-empty and any $b\in B$ is not adjacent to any $c\in C$ in $\Gamma_S$ (in other words, $\Gamma_{J\cup J^\bot}$ separates $\Gamma_S$). The \emph{elementary twist} of $B$ around $J$ is the map $\tau \colon S \rightarrow A$ defined by
    $$\tau(s)\coloneq\begin{cases}
        s & \mbox{ if }s\in C\cup J\cup J^\bot;\\
        \Delta_J s \Delta_J^{-1} & \mbox{ if }s\in B.
    \end{cases}$$
    The image $\tau(S)$ is again an Artin generating set for $A$ by \cite[Theorem 4.5]{BMMNtwistconjecture}; moreover, $R_S=R_{\tau(S)}$ by construction. 

    Two Artin generating sets $S, S'$ are \emph{twist equivalent}, and we write $S\sim S'$, if they are connected by a \emph{twist equivalence}, that is, a finite sequence $$S=S_0\xrightarrow[]{\psi_1}S_1\xrightarrow[]{\psi_2}\ldots\xrightarrow[]{\psi_n}S_n=S'$$
    where each $S_i$ is an Artin generating set and each $\psi_i$ is either an elementary twist or the restriction of a conjugation. Two simplicial graphs $\Gamma,\Gamma'$ are twist equivalent, and we write $\Gamma\sim\Gamma'$, if there exist Artin systems $(A,S)$ and $(A,S')$ such that $\Gamma_S=\Gamma$, $\Gamma_{S'}=\Gamma'$, and $S\sim S'$.

    An Artin system $(A,S)$ satisfies the \emph{strong twist conjecture} if every Artin generating set $U \subseteq A$ with $R_S=R_U$ is twist equivalent to $S$. An Artin group $A$ satisfies the strong twist conjecture if every Artin system $(A,S)$ does.
\end{defn}

\noindent For later purposes, we explore in detail a special case of elementary twist.

\begin{ex}[Twist along an edge]\label{ex:odd_twist}
    Let $(A,S)$ be an Artin system, and suppose that $\Gamma_S$ has a separating edge $\{a,b\}$ with label $m_{ab}\ge 3$. Let $S-\{a,b\}=B\cup C$ as in Definition~\ref{defn:twist_eq}, and let $\tau$ be the elementary twist of $B$ along $\{a,b\}$. For every $s\in B\cup\{a,b\}$ let $s'=\Delta_{ab} s \Delta_{ab}^{-1}$, and set $B'=\{s'\}_{s\in B}$, so that $\tau(S)=C\cup \{a,b\}\cup B'$.
    Conjugation by the Garside element $\Delta_{ab}$ permutes $\{a,b\}$, so $s\in B$ and $x\in\{a,b\}$ generate a dihedral subgroup if and only if $s'$ and $x'$ generate a dihedral subgroup with the same label. Hence the defining graph of $(A,\tau(S))$ is as follows:
    \begin{itemize}
        \item The vertex set of $\Gamma_{\tau(S)}$ is $\tau(S)$;
        \item two vertices in $C\cup\{a,b\}$ are joined by an edge with label $m$ in $\Gamma_{\tau(S)}$ if and only if they are joined by an edge with label $m$ in $\Gamma_{S}$;
        \item two vertices $s',t'\in B'\cup\{a,b\}$ are joined by an edge with label $m$ in $\Gamma_{\tau(S)}$ if and only if the corresponding vertices $s,t\in B\cup\{a,b\}$ are joined by an edge with label $m$ in $\Gamma_{S}$.
    \end{itemize}
    Now, if the label of $\{a,b\}$ is even, then $\Delta_{ab}$ commutes with both $a$ and $b$, and therefore $\Gamma_{\tau(S)}\cong \Gamma_{S}$. If otherwise $\{a,b\}$ has odd label, then the conjugation by $\Delta_{ab}$ swaps $a$ and $b$. Informally, the twist along an odd edge $\{a,b\}$ ``slides'' along $\{a,b\}$ every edge of $\Gamma_S$ connecting $\{a,b\}$ and $B$. In particular, if, say $a$ separates $B$ from the rest of $S$ in $\Gamma_S$, then $b$ separates $B'$ from the rest of $\tau(S)$ in $\Gamma_{\tau(S)}$. See Figure~\ref{fig:slide} for some examples.
\end{ex}

\begin{figure}[htp]
    \centering
    \includegraphics[width=\textwidth, alt={An elementary twist along an odd edge.}]{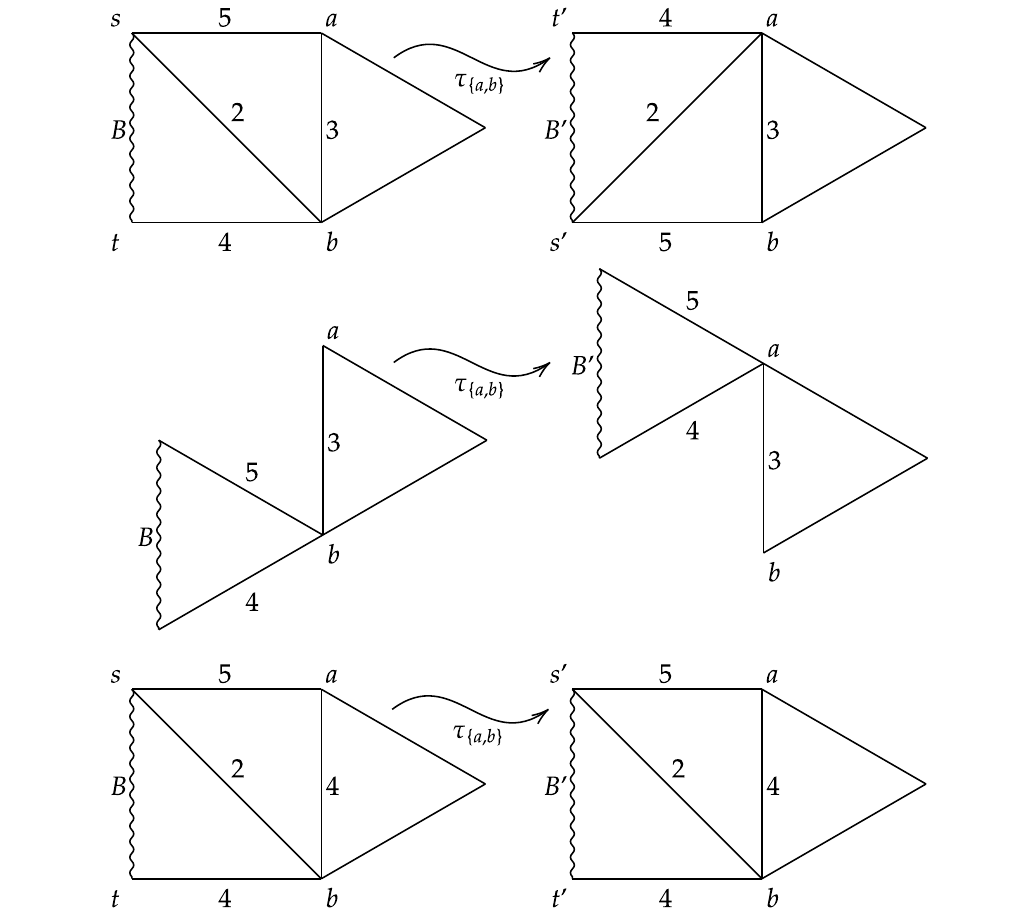}
    \caption{Above, an elementary twist along the odd edge $\{a,b\}$ swaps the roles of $a$ and $b$ in all edges connecting $B$ to $\{a,b\}$. If moreover, say, $b$ is separating (as in the middle row), the twist ``slides'' $B$ along $\{a,b\}$, so that $a$ is now separating in $\Gamma_{\tau(S)}$. Finally, an elementary twist along an even edge preserves the defining graph (as in the bottom row).}
    \label{fig:slide}
\end{figure}

\subsection{Dehn Twists}
We now define another method of producing new Artin generating sets:
\begin{defn}[Dehn Twist]\label{defn:dehn_twist}
    Let $(A,S)$ be an Artin system, and $r \in S$ separate $\Gamma_S$, so $S - \{r\}$ is a disjoint union $B \sqcup C$ where $B, C$ are non-empty and for all $b \in B$ and $c \in C$, $b$ and $c$ are not adjacent in $\Gamma_S$. Let $g\in A_{B\cup \{r\}}\cup A_{C\cup \{r\}}$ centralise $r$. The \emph{Dehn twist} of $B$ about $r$ by $g$ is the map $\delta\colon S \rightarrow A$ defined as follows: $$\delta(s)\coloneq\begin{cases}
        s & \mbox{ if }s\in C\cup\{r\};\\
        g s g^{-1} & \mbox{ if }s\in B.
    \end{cases}$$
\end{defn}

\noindent The following fact follows from general arguments about amalgamated free products, but we prove it for completeness.
\begin{lemma}\label{lem:dehntwistpreservesgraph}
    If $\delta\colon S\to A$ is a Dehn Twist, then $\delta(S)$ is an Artin generating set for $A$, $\Gamma_{\delta(S)}$ is isomorphic to $\Gamma_{S}$, and $R_S=R_{\delta(S)}$.
\end{lemma}

\begin{proof}
    We first prove that $\delta$ can be extended to a homomorphism $\hat\delta\colon A\to A$. Let $i\colon A_{C\cup\{r\}}\to A$ be the inclusion, and let $j\colon A_{B\cup\{r\}}\to A$ map every $x\in A_{B\cup\{r\}}$ to $gxg^{-1}$. Notice that $A_{B\cup \{r\}}\cap A_{C\cup\{r\}}=A_{\{r\}}=\langle r\rangle$ by \cite{VanDerLek}, and $i$ and $j$ coincide on $\langle r\rangle$ since $g$ commutes with $r$. Hence the universal property of amalgamated free products produces a homomorphism $\hat\delta\colon A\to A$ which coincides with $i$ on $A_{C\cup\{r\}}$ and with $j$ on $A_{B\cup\{r\}}$, and therefore extends $\delta$. 

    We now show that $\hat\delta$ is an isomorphism. Suppose first that $g\in A_{B\cup\{r\}}$, so that $\hat\delta$ restricts to an inner automorphism on $A_{B\cup\{r\}}$. Let $\delta'\colon S\to A$ be the Dehn twist of $B$ about $r$ by $g^{-1}$, which as above extends to a homomorphism $\hat\delta'\colon A\to A$, and notice that $\hat\delta'$ restricts to the conjugation by $g^{-1}$ on $A_{B\cup\{r\}}$. Then $\hat\delta'\circ\hat\delta$ is the identity on both $A_{B\cup\{r\}}$ and $A_{C\cup\{r\}}$, so it must be the identity of $A$; similarly $\hat\delta\circ\hat\delta'=\text{id}_A$, so we found an inverse for $\hat\delta$.

    In the case when $g\in A_{C\cup\{r\}}$, the Dehn twist of $B$ about $r$ by $g$ is the composition of the Dehn twist of $C$ about $r$ by $g^{-1}$, which is an isomorphism by the arguments in the previous paragraph, followed by the conjugation of the whole $A$ by $g$; hence $\hat\delta$ is again an isomorphism.

    The above shows that $\delta(S)$ is the image of $S$ under an isomorphism. By Lemma \ref{lem:isomorphismspreserveags} it follows that $\delta(S)$ is an Artin generating set, with $\Gamma_S \cong \Gamma_{\delta(S)}$. Finally, every generator in $S$ is mapped by $\delta$ to an element in its conjugacy class, so $R_S=R_{\delta(S)}$.
    \end{proof}

\begin{defn}\label{def:gentwisteq}
    Two Artin generating sets $S,U$ for $A$ are \emph{generalised twist equivalent}, and we write $S\sim_D U$, if they are connected by a \emph{generalised twist equivalence}, that is, a finite sequence $$S=S_0\xrightarrow[]{\psi_1}S_1\xrightarrow[]{\psi_2}\ldots\xrightarrow[]{\psi_n}S_n=S'$$
    where each $S_i$ is an Artin generating set and each $\psi_i$ is either an elementary twist, the restriction of a Dehn twist, or the restriction of a conjugation. An Artin system $(A,S)$ satisfies the \emph{generalised strong twist conjecture} if for every Artin system $(A,U)$ with $R_S=R_U$ we have that $S\sim_D U$. An Artin group $A$ satisfies the generalised strong twist conjecture if every Artin system $(A,S)$ does.
\end{defn}
\begin{rem}\label{rem:gtc_implies_wtc}
    Notice that if $S\sim_D U$ and $R_S=R_U$, then $\Gamma_S\sim \Gamma_{U}$. To see this it is enough to check that if $S'$ is related to $S$ by an elementary twist, Dehn twist or conjugation, then $\Gamma_{S} \sim \Gamma_{S'}$. The elementary twist and conjugation cases are clear. Furthermore, Dehn twisting does not change the defining graph, by Proposition~\ref{lem:dehntwistpreservesgraph}. It follows that the generalised strong twist conjecture implies the weak twist conjecture from the introduction (see Conjecture~\ref{conjintro_WTC}).
\end{rem}

\subsection{The strong twist conjecture reduces to 1-chunks}\label{sec:proof_of_refrigid}
We are now ready to state the main result of this section, which implies Theorem~\ref{thmintro:refrigidreduction}:

\begin{thm}\label{thm:refrigidreduction}
    Let $(A,S)$ be an Artin system, with $A$ one-ended. Suppose that, whenever $X\subseteq S$ spans a 1--chunk, the Artin system $(A_X,X)$ satisfies the strong twist conjecture. Then $(A,S)$ satisfies the generalised strong twist conjecture, and in particular the weak twist conjecture by Remark~\ref{rem:gtc_implies_wtc}.
\end{thm}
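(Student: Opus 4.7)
The plan is to exploit the deformation space $\crush{D}{R}$ introduced in Section~\ref{sec:defspace_for_artin}, where $R=R_S=R_U$. First I would dispose of the trivial case: if $\Gamma_S$ has no separating vertex, then $\Gamma_S$ is itself the unique big chunk of $(A,S)$, so the strong twist conjecture for $(A,S)$ holds directly by hypothesis, which \emph{a fortiori} implies the generalised version. From now on assume $\Gamma_S$ has a separating vertex, so that $M_S$ and $M_U$ are both nontrivial $A$-trees lying in $\crush{D}{R}$ by Lemma~\ref{lem:crushedDependsOnReflections}, and are surviving by Remark~\ref{rem:JS_survives}.

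The next step is to verify that $\crush{D}{R}$ is non-ascending. Since the quotient $M_S/A$ is a tree (being the block--cut bipartite graph of $\Gamma_S$), Lemma~\ref{lem:treeImpliesNonAscending} reduces this to checking irreducibility, which I plan to establish by exhibiting two loxodromic elements supported on distinct big chunk parabolics that share a separating vertex, and applying a standard ping-pong argument to ensure their commutator is again loxodromic. Having done so, Theorem~\ref{thm:GLSurvivingPath} produces a finite sequence $M_S=T_0,T_1,\dots,T_n=M_U$ of surviving trees in $\crush{D}{R}$, with consecutive trees differing by a single elementary collapse or expansion.

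The heart of the proof is then to lift this combinatorial path to Artin generating sets. By induction on $i$ I aim to construct Artin generating sets $S_i\subseteq A$ with $R_{S_i}=R$ such that the tree associated to $S_i$ is equivariantly isometric (up to a suitable reduction) to $T_i$, and $S_{i+1}$ is obtained from $S_i$ by a finite sequence of elementary twists, Dehn twists, and conjugations. Since no big chunk parabolic splits over $\Z$ by Theorem~\ref{thm:no_split}, the elementary moves between consecutive $T_i$'s only shuffle how adjacent big chunks are glued along their shared separating generators; on the level of generating sets this is precisely the operation of conjugating one side of the splitting by an element centralising the separating generator, i.e.\ a Dehn twist.

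At the terminal stage $M_{S_n}$ is equivariantly isometric to $M_U$, so after a final global conjugation the big chunk parabolics of $(A,S_n)$ and $(A,U)$ coincide as subgroups of $A$. For each matched pair $(A_X,X)$ and $(A_X,X')$, restricting $R$ to $A_X$ gives the same reflection set, so the hypothesis that $(A_X,X)$ satisfies the strong twist conjecture yields elementary twists in $A_X$ taking $X$ to $X'$. The main obstacle I anticipate is showing that these inner elementary twists, which \emph{a priori} are only legal within $(A_X,X)$, lift to elementary and Dehn twists of $(A,S_n)$: a subset of $X$ separated in $\Gamma_X$ need not be separated in $\Gamma_{S_n}$. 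I expect to handle this by processing big chunks in an order compatible with the tree structure of $M_S$ and using Dehn twists at separating vertices to decouple big chunks before each inner twist, then reglue them afterwards.
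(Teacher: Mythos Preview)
Your overall strategy---navigate from $M_S$ to $M_U$ along a path of surviving trees in $\crush{D}{R}$ and translate each elementary move into a change of Artin generating set---is exactly what the paper does in Propositions~\ref{prop:collapse Gamma-tree}--\ref{prop:zigzag}. However, your step~5 contains a genuine gap. You claim that an elementary expansion amounts, on generating sets, to ``conjugating one side of the splitting by an element centralising the separating generator, i.e.\ a Dehn twist.'' This is not correct in general. When an edge is expanded, to rebuild a combinatorial fundamental domain you must conjugate a piece $S_i$ by some $h$ lying in the vertex stabiliser $\langle S_0\rangle$, and $h$ need not centralise the relevant standard generator $s$: it only conjugates $s$ to some other $t\in S_0$. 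The paper's Proposition~\ref{prop:expansion Gamma-tree} handles this via \cite[Corollary~4.2]{Pparabolics}: such a conjugator factors as an element of the centraliser of $t$ times a product of Garside elements $\Delta_{s_{i-1}s_i}$ along an odd path from $t$ to $s$ inside $S_0$. The Garside factors are genuine elementary twists (they slide $S_i$ along the odd path), and only the residual centraliser factor is a Dehn twist. Without this decomposition the inductive step does not produce a generalised twist equivalence. Your appeal to Theorem~\ref{thm:no_split} is also not what is needed here; the relevant input is that edge stabilisers in every surviving tree are cyclic $S$-parabolics (Lemma~\ref{lem:edges stab reduced trees}), together with \cite{blufstein2023parabolic} to conclude they are parabolic \emph{inside} the adjacent vertex group.

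Your endgame (steps 6--7) is organised differently from the paper and the obstacle you anticipate is real. Rather than matching big chunks one by one and confronting the lifting problem for each, the paper argues by induction on the number of big chunks: once the two trees are equivariantly isometric (Proposition~\ref{prop:zigzag}) it collapses all but one edge orbit to obtain visual splittings $A=A_{S_1}*_{\langle r\rangle}A_{S_2}=A_{U_1}*_{\langle r\rangle}A_{U_2}$ with $A_{S_i}=A_{U_i}$, checks that the reflection sets agree inside each factor via a retraction (Claim~\ref{claim:reflections_in_chunk}), and applies the inductive hypothesis to each factor. The resulting generalised twist equivalences are then extended to all of $A$ one move at a time (Claim~\ref{claim:hatpsi}), using that each factor meets its complement in the single vertex $r$. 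This sidesteps precisely the difficulty you flag: extending twists from a big chunk attached at several separating vertices is replaced by extending from a half attached at exactly one.
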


\noindent For the rest of the section, unless otherwise stated, we work under the following assumption:
\begin{notation}\label{notation:ASR} Let $A$ be an Artin group, with $A$ one-ended. Let $R$ be a fixed reflection set associated to some Artin generating set, and let $\crush{D}{R}$ be the corresponding deformation space. \end{notation}

\noindent We start by observing the following:
\begin{lemma}
    Every $A$-tree in $\crush D {R}$ admits a combinatorial fundamental domain, as in Definition~\ref{defn:combfunddom}.
\end{lemma}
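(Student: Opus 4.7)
The plan is to invoke Lemma~\ref{lem:cfdTreeQuotient}, reducing the statement to showing that $T/A$ is a tree for every $T \in \crush{D}{R}$. Since the Betti number of the quotient is constant across a deformation space (as used in the proof of Lemma~\ref{lem:treeImpliesNonAscending}, citing \cite[Section 4]{GL_def_trees}), it suffices to exhibit a single tree in $\crush{D}{R}$ whose quotient is a tree.

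The natural candidate is the Bass-Serre tree of $M_S$, which lies in $\crush{D}{R}$ by construction. I would verify that the underlying graph of $M_S$ is a tree: this is essentially the block-cut tree of $\Gamma_S$, with black vertices corresponding to big chunks $\Delta$ of $\Gamma_S$, white vertices corresponding to separating vertices $v$ of $\Gamma_S$, and edges recording the incidences $v\in\Delta$. Since $\Gamma_S$ is connected (this follows from one-endedness, cf.~Remark~\ref{rem:one_end}), its block-cut structure is known to be a tree. Concretely, any cycle in $M_S$ would pass alternately through black and white vertices, giving a cyclic sequence of distinct big chunks $\Delta_1,\dots,\Delta_n,\Delta_1$ and distinct separating vertices $v_1,\dots,v_n$ with $v_i\in\Delta_i\cap\Delta_{i+1}$; one can then argue (e.g.\ by considering the union of the $\Delta_i$) that this contradicts either the maximality of the big chunks or the fact that each $v_i$ is separating.

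Once $M_S/A$ is seen to be a tree, the constancy of the Betti number propagates this property to every $T\in\crush{D}{R}$, so $T/A$ is a tree as well, and Lemma~\ref{lem:cfdTreeQuotient} supplies a combinatorial fundamental domain.

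The main (and really only) subtlety is the elementary graph-theoretic verification that the block-cut structure of $\Gamma_S$ is acyclic; everything else is a direct appeal to results already established in the excerpt.
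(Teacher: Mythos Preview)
Your proposal is correct and follows exactly the paper's approach: reduce via Lemma~\ref{lem:cfdTreeQuotient} to showing $T/A$ is a tree, invoke Betti-number invariance across the deformation space, and check that $M_S/A$ is a tree. The only difference is that the paper dispatches the last point with the phrase ``by construction'' (the bipartite incidence graph described in Definition~\ref{def:crushed_tree} is precisely the block-cut tree of $\Gamma_S$), whereas you spell out the block-cut tree verification explicitly.
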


\begin{proof} Take $S$ an Artin generating set such that $R_S = R$. By construction $M_S/A$ is a tree, and therefore so is $T/A$ for every $T\in \crush D {R}$ by invariance of the Betti number across the deformation space. Then the statement follows from Lemma~\ref{lem:cfdTreeQuotient}. \end{proof}

\begin{cor}\label{lem:crushNonAscending}
    If $S$ is an Artin generating set with $R_S = R$ and $\Gamma_S$ has at least two 1--chunks, then $\crush D {R}$ is non-ascending.
\end{cor}
\begin{proof}
    By Lemma~\ref{lem:treeImpliesNonAscending}, it is enough to show that, if $\Gamma_S$ has at least two 1--chunks, then $\crush D {R}$ is irreducible, and in turn it suffices to exhibit two loxodromic elements for the action on $M_S$ whose commutator is again loxodromic. Let $X,Y\subseteq S$ span different 1--chunks, let $a\in X-Y$ and $b\in Y-X$, and set $g=ab$ and $h=a^2b$, which are both loxodromic as they they are products of two elliptic elements with disjoint stable fixed point sets (this is a standard ping-pong type argument, see e.g. \cite{pingpong}). Their commutator $abab^{-1}a^{-2}$ is conjugate to $[b,a]=b(ab^{-1}a^{-1})$, which again is a product of elliptic elements with disjoint stable fixed point sets.
\end{proof}

\begin{lemma}
    \label{lem:edges stab reduced trees}
    Let $\mathcal F\le\crush D {R}$ be the subcomplex spanned by surviving trees, and $S$ be an arbitrary Artin generating set such that $R_S = R$. For every $(T,\Omega)\in\mathcal F$,
    \begin{enumerate}
        \item for every $e\in\edge T$, $\Stab{\Omega}{e}$ is a cyclic $S$-parabolic subgroup;
        \item for every $v \in \ver T$, $\Stab{\Omega}{v}$ is an $S$-parabolic subgroup.
    \end{enumerate}
\end{lemma}

\begin{proof} 
    For the sake of light notation, we will identify $A$-trees with the underlying tree. The claim holds for $M_S$ by how it was constructed in Definition~\ref{def:crushed_tree}. If $\Gamma_S$ only has one 1--chunk then $\crush{D}{R}$ consists of a single point, and there is nothing to prove. Otherwise we can assume that $\crush{D}{R}$ is non-ascending, by Corollary~\ref{lem:crushNonAscending}.
    
    Let ${M}'$ be a reduced tree obtained  from $M_S$ by a finite sequence of elementary collapses, which exists as observed in Remark~\ref{rem:reduced JSJ tree}. By Lemma \ref{lem:stabsOfCollapse}, each collapse preserves the set of stabilisers of those edges that are not collapsed, and every vertex stabiliser in the collapsed tree appeared as a vertex stabiliser in the original tree. Therefore the claim holds for ${M}'$. If $T$ is a reduced tree, then $T$ and ${M}'$ are related by a finite sequence of slide moves~\cite[Theorem 7.2]{GL_def_trees}, which preserve the set of edge and vertex stabilisers (notice that the theorem requires the deformation space to be non-ascending). In particular, the claims holds for each reduced tree.\medskip
    
    Finally, let us deal with the general case in which $T$ is a surviving tree. By definition of surviving, for every edge~$e$ of $T$, there exists a reduced tree $T'$ and a finite sequence of elementary collapses $T\to T'$ that do not collapse~$e$. Again, because elementary collapses preserve the set of stabilisers of edges that are not collapsed, the claim for edges holds for~$T$. \par

    Let us now turn to the claim for vertices. We fix a reduced tree~$(T_0,\Omega_0)$ and a finite sequence of elementary collapses $\phi\colon T\to T_0$. We decompose $\phi$ as $\phi_1\circ\cdots\circ\phi_k$, where we set $T=T_k$ and, for every $i$, $\phi_i\colon T_i\to T_{i-1}$ is an elementary collapse. By induction on $k\in\mathbb N$, we shall prove that, for every vertex $x$ of $T_i$, $\Stab \Omega x$ is an $S$-parabolic subgroup. For the base case, $i=0$ so the result holds as $T_0$ is reduced. Now suppose the claim holds for $T_{i-1}$ and consider $\phi_i\colon T_i \to T_{i-1}$ a single elementary collapse: there is an edge $e=\{u,v\}$ of $T_i$ with $\Stab{\Omega_i} {u}\le\Stab{\Omega_i}{v}$ such that $\phi_i$ is the elementary collapse of the edge~$e$. Let $x$ be a vertex of $T_i$. If $x$ does not belong to the orbit of $u$ or $v$, then $\phi_i$ is injective on $\phi_i^{-1}(\phi_i(x))$ and $\Stab{\Omega_i}{x}=\Stab{\Omega_{i-1}}{\phi(x)}$ by Lemma~\ref{lem:stabsOfCollapse}. Because the claim holds for $T_{i-1}$, $\Stab{\Omega_{i-1}}{\phi(x)}$ is an $S$-parabolic subgroup. Otherwise, up to replacing $x$ with an $\Omega_i$-translate of $x$, we may assume that $x=v$ or $x=u$. If $x=v$, then $\Stab{\Omega_i} {x}=\Stab{\Omega_{i-1}}{\phi_i(e)}$ again by Lemma~\ref{lem:stabsOfCollapse}. By the inductive hypothesis, $\Stab{\Omega_{i-1}}{\phi_i(e)}$ is an $S$-parabolic subgroup. If $x=u$, then $\Stab{\Omega_i} {x}=\Stab{\Omega_i} {e}$ (since $\Stab{\Omega_i} {u}\le\Stab{\Omega_i}{v}$ and the action is without inversion). By the claim on the edges, it follows that $\Stab{\Omega_i}{e}$ is an $S$-parabolic subgroup. This concludes the proof of the claim for vertices.
\end{proof}

\begin{defn}[$S$-tree]
    \label{def:Gamma-tree}
    Let $(A,S)$ be an Artin system; an $A$-tree $(T,\Omega)$ is an \emph{$S$-tree} if there exists a combinatorial fundamental domain $K\subseteq T$ for the action $\Omega$ such that:
    \begin{enumerate}
        \item for every $x\in\ver K\cup\edge K$, $\Stab{\Omega}{x}$ is a standard $S$-parabolic subgroup;
        \item every element of $S$ fixes a vertex of $K$.
    \end{enumerate} 
\end{defn}

\noindent The following two propositions, which are the core arguments underlying Theorem~\ref{thm:refrigidreduction}, show that performing an elementary deformation on an $S$-tree produces an $S'$-tree, where $S'\sim_D S$.
In the proofs we will follow the strategy of~\cite[Lemma 4.8 and Lemma 4.9]{Jones_VF}.

\begin{prop}
    \label{prop:collapse Gamma-tree}
    Let $S$ be an Artin generating set with $R_S = R$ and $(T,\Omega) \in \crush{D}{R}$ be an $S$-tree, and let $(T',\Omega')\in\crush{D}{R}$ be an $A$-tree that is obtained from $(T,\Omega)$ by collapsing one edge. Then  $(T',\Omega')$ is an $S$-tree.
\end{prop}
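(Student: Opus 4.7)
The plan is to realise the new fundamental domain as the image of the old one under the collapse map, and then to use the earlier lemma about edge and vertex stabilisers under elementary collapses to transfer properties.

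Let $\phi\colon T\to T'$ denote the elementary collapse, which identifies each $A$-translate of an edge $e=\{u,v\}$ with $\Stab{\Omega}{u}\le \Stab{\Omega}{v}$ and $u,v$ in different $A$-orbits. Let $K\subseteq T$ be a combinatorial fundamental domain witnessing that $T$ is an $S$-tree; since $K$ meets each $A$-orbit of edges exactly once, we may relabel so that the edge we collapse satisfies $e\subseteq K$, with $u,v$ the unique representatives in $K$ of their $A$-orbits. Set $K'\coloneq \phi(K)$. Since $K$ is a subtree of $T$ and $\phi$ only identifies the endpoints of $e$ inside $K$, $K'$ is again a subtree of $T'$.

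First I would verify that $K'$ is a combinatorial fundamental domain for $(T',\Omega')$. Because $\phi$ is $A$-equivariant and surjective, every $A$-orbit in $T'$ meets $K'$. For edges, each edge of $K$ other than $e$ maps injectively to a distinct $A$-orbit in $T'$, so each edge orbit of $T'$ is represented exactly once in $K'$. For vertices, $\phi$ is injective on $\ver K\setminus\{u\}$ and maps $u,v$ to the same vertex of $K'$; that shared vertex represents the (now merged) orbit $[u]=[v]$ in $T'$, and every other vertex of $K'$ represents a distinct orbit. Thus $K'$ satisfies Definition~\ref{defn:combfunddom}.

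Next I would check the stabiliser condition of Definition~\ref{def:Gamma-tree}(1), invoking the lemma preceding Definition~\ref{defn:deformation space}. For any open edge $e'\subseteq K'$ or any vertex $w'\in\ver{K'}$ with $w'\ne \phi(u)$, that lemma gives $\Stab{\Omega'}{e'}=\Stab{\Omega}{\phi^{-1}(e')\cap K}$ and $\Stab{\Omega'}{w'}=\Stab{\Omega}{\phi^{-1}(w')\cap K}$, both of which are standard $S$-parabolic since $T$ is an $S$-tree. For the distinguished vertex $\phi(u)=\phi(v)$, equation~\eqref{eq:stab_phi(e)} of that same lemma yields $\Stab{\Omega'}{\phi(u)}=\Stab{\Omega}{v}$, which is a standard $S$-parabolic as $v\in \ver K$.

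Finally, condition~(2) of Definition~\ref{def:Gamma-tree} is immediate: if $s\in S$ fixes a vertex $w\in K$, then by $A$-equivariance of $\phi$ it fixes $\phi(w)\in K'$. Putting everything together, $K'$ witnesses that $(T',\Omega')$ is an $S$-tree. There is no real obstacle in this argument; the only subtlety is picking a representative of the collapsed edge orbit inside the fundamental domain so that the lemma on stabilisers can be applied vertex by vertex and edge by edge.
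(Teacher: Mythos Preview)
Your proof is correct and follows essentially the same approach as the paper: take $K'=\phi(K)$, observe it is a combinatorial fundamental domain by equivariance, use Equation~\eqref{eq:stab_phi(e)} to identify the stabiliser of the collapsed vertex as $\Stab{\Omega}{v}$, and push fixed points forward via $\phi$. The paper's write-up is a bit terser (it dispatches the fundamental-domain check in a single clause), but the logical content is identical.
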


\begin{proof}
    Let $\phi\colon T \to T'$ be the collapse map, and let $K\subseteq T$ be a combinatorial fundamental domain for $\Omega$ that makes $(T,\Omega)$ into an $S$-tree. Since $\phi$ is $A$-equivariant and $K$ contains exactly one edge for every orbit, there exists a unique edge $e\in K$ that is collapsed, say with endpoints $u$ and $v$ such that $\Stab{\Omega}{u}\le\Stab{\Omega}{v}$. 
    
    We claim that $K'=\phi(K)$ makes $(T',\Omega')$ into an $S$-tree, that is, that $K'$ satisfies the conditions of Definition~\ref{def:Gamma-tree}. Firstly, $K'$ is a combinatorial fundamental domain, by equivariance of $\phi$. 
    
    Let us now show that stabilisers of simplices in $K'$ are standard $S$-parabolic subgroups. Because $\phi$ restricts to a bijection between $K- e$ and $ K'-\{\phi(e)\}$, we only need to check that $\Stab{\Omega'}{\phi(e)}$ is a standard $S$-parabolic subgroup. In this case Equation~\eqref{eq:stab_phi(e)} gives $\Stab{\Omega'}{\phi(e)}=\Stab{\Omega}{v}$, which by hypothesis is a standard $S$-parabolic subgroup. 

    Finally, since $K$ makes $(T, \Omega)$ an $S$-tree, every $s\in S$ fixes some vertex $x\in\ver K$. Since $\phi$ is $A$-equivariant,  $s$ must also fix $\phi(x) \in \ver{\phi(K)} = V(K')$.
\end{proof}

\begin{prop}
    \label{prop:expansion Gamma-tree}
    Let $(T,\Omega),(T',\Omega')\in\mathcal F$ be surviving trees such that $(T,\Omega)$ is obtained from $(T',\Omega')$ by an elementary collapse. If $S$ is an Artin generating set with $R_S = R$ and  $(T,\Omega)$ is an $S$-tree, then there is an Artin generating set $S'\subseteq A$ such that $S\sim_D S'$ and $(T',\Omega')$ is an $S'$-tree.
\end{prop}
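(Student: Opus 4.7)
The plan is to lift the combinatorial fundamental domain $K$ through the collapse map, verify that the resulting subset is a combinatorial fundamental domain with almost all stabilisers inherited from $K$, and correct the only new stabiliser (of the collapsed edge and its ``smaller'' endpoint) by a generalised twist of $S$.

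First I would set up notation: let $\phi\colon T'\to T$ be the elementary collapse, denote by $e'=\{u',v'\}$ the collapsed edge with $\Stab{\Omega'}{u'}\le\Stab{\Omega'}{v'}$, and choose $\bar{w}:=\phi(e')\in K$. Define $K':=\phi^{-1}(K)\subseteq T'$. Since preimages of connected sets under collapse maps are connected, $K'$ is a subtree; moreover each vertex of $K$ distinct from $\bar{w}$ and each edge of $K$ admits a unique lift, while $\bar{w}$ is replaced by the pair $u',v'$ together with the new edge $e'$, so $K'$ is a combinatorial fundamental domain.

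Second, I would classify the stabilisers of $K'$. For every vertex or edge $x'\in K'$ whose $\phi$-image is not $\bar{w}$, equation~\eqref{eq:stab_phi(e)} and the surrounding argument yield $\Stab{\Omega'}{x'}=\Stab{\Omega}{\phi(x')}$, hence a standard $S$-parabolic. The vertex $v'$ inherits the stabiliser $H:=\Stab{\Omega}{\bar{w}}$, again standard. The only remaining stabiliser is $H_u:=\Stab{\Omega'}{u'}=\Stab{\Omega'}{e'}\le H$, which by Lemma~\ref{lem:edges stab reduced trees} is a cyclic $S$-parabolic, so $H_u=\langle r^g\rangle$ for some $r\in S$ and $g\in A$; the containment $H_u\le H$, combined with the ``parabolic of a parabolic'' property for Artin groups, allows one to take $g\in H$.

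Finally, I would upgrade $S$ to a generalised-twist equivalent $S'$ so that $H_u$ becomes a standard $S'$-parabolic. If $g$ can be chosen trivial then $S':=S$ already works. Otherwise, I would partition $S\setminus\{r\}$ into the sets $B$ and $C$ of generators fixing a vertex on the $u'$-side and $v'$-side of $T'\setminus e'$ respectively, and define $S':=C\cup\{r\}\cup gBg^{-1}$. A new fundamental domain $K''$ making $(T',\Omega')$ an $S'$-tree is then obtained by translating the $u'$-side component of $K'$ by $g^{-1}$, which glues correctly at $v'$ precisely because $g$ was chosen in $H$. The hard step will be to recognise $S\mapsto S'$ as a finite composition of Dehn twists, elementary twists, and conjugations: this reduces to arranging that $g$ centralises $r$, which I expect to follow from the surviving hypothesis on $T'$ (precluding $u'$ from being a leaf, and thus providing an incident edge whose standard cyclic stabiliser lies in $H_u$) combined with the absence of proper roots of standard generators in the relevant Artin subgroups.
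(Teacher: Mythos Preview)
Your lift $K':=\phi^{-1}(K)$ is not a combinatorial fundamental domain. The collapse is performed equivariantly, so $\phi^{-1}(\bar w)$ is not the single edge $e'$ but the whole star consisting of $v'$ together with every translate $h\cdot e'$ for $h\in\Stab{\Omega'}{v'}$. Since $\Stab{\Omega'}{u'}=\Stab{\Omega'}{e'}$ is cyclic (Lemma~\ref{lem:edges stab reduced trees}) while $\Stab{\Omega'}{v'}=\Stab{\Omega}{\bar w}$ is an arbitrary standard parabolic, this star typically has infinitely many leaves, and $K'$ contains many vertices in the $A$-orbit of $u'$. Concretely, each branch of $K$ at $\bar w$ lifts uniquely, but its lift attaches either at $v'$ or at some translate $h\cdot u'$, and different branches generally produce different elements $h$.

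This breaks your two-sided partition $B\sqcup C$ and your single conjugator $g$: there is no one element of $H$ that simultaneously carries all the ``$u'$-side'' branches back to $u'$. The paper's proof handles exactly this by first decomposing $K=\bigcup_i K_i$ into the maximal subtrees meeting only at $\bar w$, lifting each $K_i$ to an $L_i'$ attached at some $w_i=h_i\cdot u'$, and conjugating each $S_i$ by its own $h_i$. Showing that each $S_i\mapsto h_i^{-1}S_ih_i$ is a generalised twist is then the substantial step (Claim~\ref{claim:twist_by_steps}): one uses the edge of $L_i'$ incident to $w_i$ (whose stabiliser is standard because it lifts an edge of $K$, not because of the surviving hypothesis) to see that $h_i^{-1}$ conjugates one standard generator $s\in S_0$ to another $t\in S_0$, and then invokes Paris' description of such conjugators as a centraliser element times a product of dihedral Garside elements along an odd path. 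Your plan to ``arrange that $g$ centralises $r$'' is not what actually happens; the conjugator genuinely moves between standard generators, and the odd-path decomposition is what exhibits it as a sequence of elementary twists followed by a Dehn twist.
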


\begin{proof}
     We denote by $\phi\colon T'\to T$ the collapsing map. Let $K\subseteq T$ be a combinatorial fundamental domain for $\Omega$ that makes $(T,\Omega)$ into an $S$-tree. Throughout, we use without comment that, by Lemma~\ref{lem:edges stab reduced trees}, every edge and vertex stabiliser in $T$ and $T'$ is an $S$-parabolic subgroup.
     
     Since $K$ is a combinatorial fundamental domain and $\phi$ is $A$-equivariant, there exists an edge $e=\{u,v\}$ of $T'$ that gets collapsed to a vertex $\phi(e)\in K$, say with $\Stab{\Omega'}{u}\le\Stab{\Omega'}{v}$. Now consider the chain
    \begin{equation}
    \label{eqn:chain of stabs}
    \Stab{\Omega'}{e}=\Stab{\Omega'}{u}\le\Stab{\Omega'}{v}=\Stab{\Omega}{\phi(e)}\le A,
    \end{equation}
    where the equality is Equation~\eqref{eq:stab_phi(e)}. This is a chain of inclusions of $S$-parabolic subgroups of $A$, so by~\cite[Theorem 1.1]{blufstein2023parabolic} $\Stab{\Omega'}{e}$ is a parabolic subgroup of $\Stab{\Omega'}{v}$  (meaning that $\Stab{\Omega'}{e}$ is conjugated \emph{by an element of $\Stab{\Omega'}{v}$} to a standard $S$-parabolic subgroup \emph{contained inside} $\Stab{\Omega'}{v}$).  Up to replacing $e$ by a $\Stab{\Omega'}{v}$-translate of it, we may therefore assume that $\Stab{\Omega'}{e}$ is a standard $S$-parabolic subgroup of $\Stab{\Omega'}{v}$, hence of $A$.

    Let us now decompose the fundamental domain $K$ as $\bigcup_{i=1}^nK_i$, where the various $K_i$ are the maximal subtrees of $K$ having $\phi(e)$ as a vertex of valence one, so that $K_i\cap K_j=\{\phi(e)\}$ whenever $i\neq j$. Such decomposition induces subsets $\{S_i\}_{i=0}^n$ of~$S$, where $A_{S_0}=\Stab{\Omega}{\phi(e)} = \Stab{\Omega'}{v}$ and, for every $i\in\{1,\dots,n\}$ and every $s\in S_i$, the subtree of $K$ fixed by $s$ is contained in $K_i-\{\phi(e)\}$. 
    
    \begin{claim}\label{claim:partition}
        The union $\bigcup_{i=0}^n S_i$ is a partition of $S$.
    \end{claim} 

    \begin{claimproof}[Proof of Claim~\ref{claim:partition}]
    Let $s\in S$. By definition of $S$-tree, $s$ fixes a vertex of $K$. If $s$ fixes $\phi(e)$, then $s\in S_0$; if not, then there exists $i\in\{1,\dots,n\}$ such that $s$ fixes a vertex $w_i$ of $K_i$, so $s \in S_i$. If there existed a different index $j$ such that $s$ fixes a vertex $w_j$ of $K_j$, then $s$ would fix the unique path between $w_i$ and $w_j$, hence $\phi(e)$, giving a contradiction. This shows that the $S_i$'s are all disjoint.\end{claimproof}

    \noindent The tree structure of $K$ gives a decomposition of $A$ as an amalgamated product of the factors $\{A_{S_0\cup S_i}\}_{i=1}^n$ over the subgroup $A_{S_0}$. 
    
    \begin{claim}\label{claim:dec}
        The decomposition $A_{S_1\cup S_0}*_{A_{S_0}}\cdots *_{A_{S_0}}A_{S_n\cup S_0}$ is a visual splitting for $(A,S)$.
    \end{claim}

    \begin{claimproof}[Proof of Claim~\ref{claim:dec}]
    Because $\bigcup_{i=0}^nS_i=S$, it is sufficient to show that, for every distinct indices $i,j\in\{1,\dots,n\}$, for every $s\in S_i$ and for every $t\in S_j$, $A_{\{s,t\}}\cong F_2$. To this end, let us observe that $\Stab{\Omega}{\phi(e)}$ is a standard $S$-parabolic subgroup, so $\Stab{\Omega}{\phi(e)}\cap \langle s\rangle=\Stab{\Omega}{\phi(e)}\cap \langle t\rangle=\{1\}$ by \cite{VanDerLek}. Hence $s$ and $t$ have disjoint stable fixed-point sets, in particular separated by $\phi(e)$. Notice that $\phi(e)$ is not fixed by any non-trivial power of $s$ or $t$. Hence, a standard ping-pong argument now shows that $s$ and $t$ generate a non-abelian free group, as required.\end{claimproof}

    We now define simultaneously an Artin generating set $S'\subseteq A$ that is generalised twist-equivalent to $S$, and a subtree $K'\subseteq T'$ that makes $(T',\Omega')$ into an $S'$-tree. To be precise, we set $S_0'=S_0$, and we shall inductively replace each $S_i$ with some $S_i'$ (which we will shortly define) for every $i=1,\ldots,n$. Letting $U_{i}=\bigcup_{j\le i}S_j'\cup \bigcup_{j> i}S_j$ for all $i$ from $0$ to $n$, we will define the $S_i'$ to have the following property: two generators $a,b\in U_i$ may generate a dihedral Artin group only if either they both belong to some $S_j$ (or $S_j'$), or if one of them is in $S_0$. This property ensures that every $U_i$ will give an amalgamated product decomposition over $A_{S_0'}$ for $A$. Furthermore, at each step we shall show that $U_{i-1}\sim_D U_{i}$. Then the required $S'$ will be $U_n$, which will therefore be generalised twist equivalent to~$S$, which is $U_0$.

    Let us define $L_i'\coloneqq\overline{\phi^{-1}(K_i-\{\phi(e)\})}\subseteq T'$, where $\overline{X}$ represents the closure of $X$. Because $\phi$ is injective outside of the orbit of $e$, $L'_i$ is isomorphic to $K_i$ as a graph and contains some $w_i\in \phi^{-1}(\phi(e))$ as a vertex of valence one. The next Claim shows that  $w_i$ is either $v$ or a $\Stab{\Omega'}{v}$-translate of $u$:

    \begin{claim}\label{claim:preimage_phi_e}
        The only vertices in $\phi^{-1}(\phi(e))$ are $v$ and all $\Stab{\Omega'}{v}$-translates of~$u$.
    \end{claim}
    \begin{claimproof}
         To see this, first notice that the only edges in $\phi^{-1}(\phi(e))$ are in the $\Omega'$-orbit of $e$, since $\phi(e)$ has no edges and $\phi$ is injective on edges not in the orbit of $e$. It follows that the vertices of $\phi^{-1}(\phi(e))$ are in the $\Omega'$-orbit of $u$ and $v$, since every vertex of $\phi^{-1}(\phi(e))$ must be the endpoint of one of its edges.
         
         Consider an arbitrary vertex $hx$ of $\phi^{-1}(\phi(e))$, where $h \in A$ and $x \in \{u, v\}$. Notice that $\phi(e) = \phi(hx) = h\phi(x) = h\phi(e)$, where the second equality is by equivariance of $\phi$. So $h \in \Stab{\Omega}{\phi(e)}$ and in fact the vertices of $\phi^{-1}(\phi(e))$ must be in the $\Stab{\Omega}{\phi(e)}$-orbit of $u$ and $v$. Finally, $\Stab{\Omega'}{v}=\Stab{\Omega}{\phi(e)}$ by Equation \eqref{eq:stab_phi(e)}, so the vertices of $\phi^{-1}(\phi(e))$ are in the $\Stab{\Omega'}{v}$-orbit of $u$, or the $\Stab{\Omega'}{v}$-orbit of $v$, which is just $v$.
    \end{claimproof}
    
    We now wish to define $K_i' \subset T'$ to be subtrees which we will eventually use to build the fundamental domain $K'$, analogously to how $K = \bigcup_{i=1}^n K_i$. We define $S_i'$ and $K_i'$ together, to ensure that eventually $K'$ will be a fundamental domain making $T'$ and $S'$-tree. If $w_i$ is either $v$ or $u$, we simply set $K_i'\coloneqq L_i'\cup e$ and $S_i'\coloneqq S_i$. This way $U_{i-1}=U_{i}$, and there is nothing to prove. Let us now assume that $w_i=h\cdot u$ for some $h\in\Stab{\Omega'}{v}-\Stab{\Omega'}{u}$. In this case we must take some care in defining $K_i'$, since we want $K'$ to be connected, and control the point stabilisers.  Let $f$ be the edge of $L_i'$ that has $h\cdot u$ as an endpoint. Because $\phi\colon T'\to T$ restricts to an isometry $L_i'\to K_i$, $f$ is not collapsed, so we have that $\Stab{\Omega'}{f}=\Stab{\Omega}{\phi(f)}$. In particular, the latter is a cyclic standard $S$-parabolic subgroup, say generated by some $s\in S$. In fact, $$s \in \Stab{\Omega'}{f} \leq \Stab{\Omega'}{h \cdot u} \leq \Stab{\Omega'}{v} = A_{S_0}.$$ In turn, the edge $h^{-1}\cdot f$ has $u$ as an endpoint, so $h^{-1}\Stab{\Omega'}{f}h$ is an $S$-parabolic subgroup of $A$ contained in $\Stab{\Omega'}{u}$, hence an $S$-parabolic subgroup of $\Stab{\Omega'}{u}$ by~\cite[Theorem 1.1]{blufstein2023parabolic}. Therefore, up to replacing $h$ by an element of $h\Stab {\Omega'}{u}$ (which does not change the fact that $w_i=hu$, or that $h \in \Stab{\Omega'}{v} = A_{S_0}$), we may assume that $h^{-1}\Stab{\Omega'}{f}h$ is a standard $S$-parabolic subgroup, say generated by $t\in S$. In order that $K'$ is connected, we set $K_i'\coloneqq h^{-1}\cdot L_i'\cup e$. See Figure~\ref{fig:lift_of_CFD} for an intuition.

    \begin{figure}[htp]
        \centering
        \includegraphics[width=0.75\linewidth]{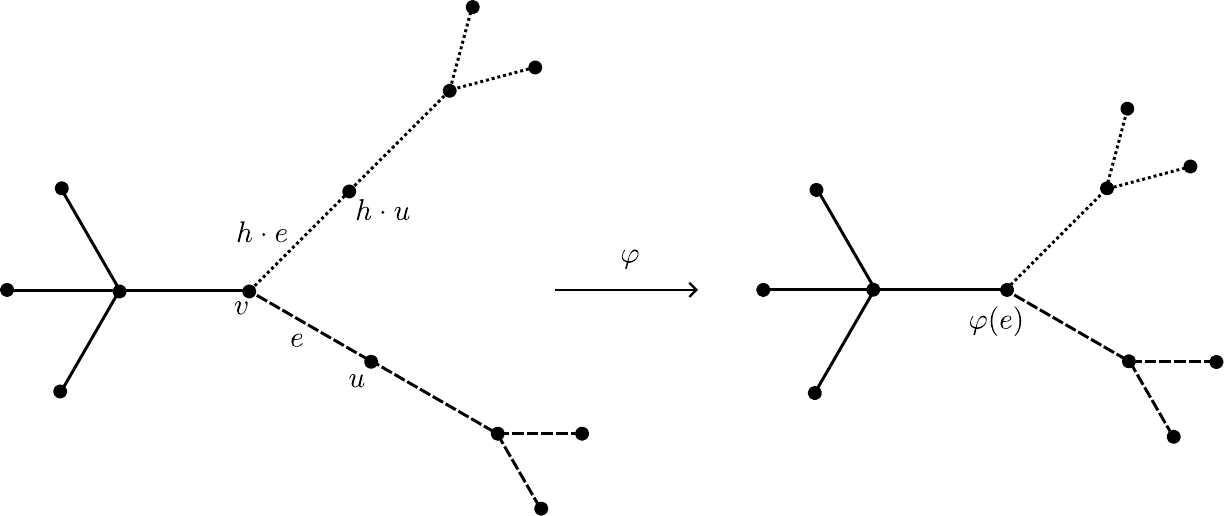}
        \caption{Dotted: $L'_i\cup h\cdot e$ (on the left) and $K_i$ (on the right). Dashed: $K_i'=h^{-1}\cdot L'_i\cup e$ (on the left) and $h^{-1}\cdot K_i$ (on the right).}
        \label{fig:lift_of_CFD}
    \end{figure}

    Since $h^{-1}$ conjugates $s$ to $t$ in the standard parabolic subgroup $A_{S_0}$, it follows by direct application of Lemma~\ref{lem:conjugatingStandardGenerators} that there is a sequence $\{s_i\}_{i=0}^k\subseteq S_0$ such that:
    \begin{enumerate}
        \item\label{item:si} $s_0=t$, $s_k=s$, and for every $i\in\{0,\dots,k-1\}$, $\{s_i,s_{i+1}\}$ spans an odd dihedral Artin $S$-parabolic subgroup;
        \item \label{item:structure of h} $h^{-1}\in\centre{A_{S_0}}{t}\Delta_{s_0s_1}\cdots\Delta_{s_{k-1}s_k}$, where $\centre{A_{S_0}}{t}$ denotes the centraliser of $t$ in $A_{S_0}$.
    \end{enumerate}

        Before defining $S_i'$, we observe the following fact:

    \begin{claim}\label{claim:seps}
        For every $x\in S_i$ and for every $r\in S_0$, if $A_{\{x,r\}}$ is of spherical type, then $r=s$.
    \end{claim}

\begin{proof}[Proof of Claim~\ref{claim:seps}]
Let $x\in S_i$ and let $r\in S_0-\{s\}$. Because $x\in S_i$, the stable fixed point set of $x$ intersects $L_i'$ and does not contain $f$. Indeed, if the stable fixed point set of $x$ contained $f$, then a non-trivial power of $x$ would belong to $\Stab{\Omega'}{f}=\langle s\rangle \le A_{S_0}$, against the fact that $A_{S_i}\cap A_{S_0}=\{1\}$~\cite{VanDerLek}.  On the other hand, $r$ fixes $v$ but not $f$, so its stable fixed-point in $T'$ lies on the opposite side of $f$ with respect to $L_i'$. Again, as in the proof of Claim~\ref{claim:dec}, a standard ping-pong argument shows that $A_{\{x,r\}}\cong F_2$.\end{proof}

    Set $S_i'\coloneqq h^{-1}S_ih$, and recall that we defined $U_{i-1}=\bigcup_{j<i}S_j'\cup \bigcup_{j\ge i}S_j$ and $U_{i}=\bigcup_{j\le i}S_j'\cup \bigcup_{j> i}S_j$. Consider the map $\alpha_i\colon U_{i-1}\to U_{i}$ which maps $a\in U_{i-1}$ to
    \[\alpha_i(a)=\begin{cases}
        h^{-1}ah&\mbox{if }a\in S_i;\\
        a&\mbox{ otherwise}.
    \end{cases}\]

    To conclude that $U_{i-1}$ and $U_{i}$ are generalised twist equivalent, it is enough to prove the following:

    \begin{claim}\label{claim:twist_by_steps}
        $\alpha_i$ is a sequence of elementary twists and a Dehn twist.
    \end{claim}
    
    \begin{claimproof}[Proof of Claim~\ref{claim:twist_by_steps}]
    Recall by Item~\eqref{item:structure of h} that $h^{-1}\in\centre{A_{S_0}}{t}\Delta_{s_0s_1}\cdots\Delta_{s_{k-1}s_k}$. For $l \in \{0 \dots k\}$ we now define 
    $$V_l = \bigcup_{j<i}S_j'\cup (\Delta_{s_{k-l}s_{k-l+1}} \dots \Delta_{s_{k-1}s_k}) S_i (\Delta_{s_{k-l}s_{k-l+1}} \dots \Delta_{s_{k-1}s_k})^{-1} \cup \bigcup_{j > i}S_j,$$
    so in particular $V_0 = U_{i-1}$. For the sake of light notation, we define 
    $$W_l = (\Delta_{s_{k-l}s_{k-l+1}} \dots \Delta_{s_{k-1}s_k}) S_i (\Delta_{s_{k-l}s_{k-l+1}} \dots \Delta_{s_{k-1}s_k})^{-1}.$$
    We will show by induction that, for $l \in \{0 \dots k-1\}$, $V_{l+1}$ differs from $V_l$ by an elementary twist, and $s_{k-l}$ separates $W_l$ from the other standard generators in $V_l$. The base case follows from the fact that $s_k = s$ is separating by Claim \ref{claim:seps}. For the inductive step, we notice that $s_{k-l}$ separates $W_l$ from the other standard generators of $V_l$, so,  by Item \eqref{item:si}, $s_{k-l}$ and $s_{k-l-1}$ span a separating odd edge in $\Gamma_{V_l}$. It follows that $V_{l+1}$ differs from $V_l$ by a twist along this edge. This twist has the effect of ``sliding'' the $\Gamma_{W_l}$ subgraph along the edge $\{s_{k-l}, s_{k-l-1}\}$ in $\Gamma_{V_l}$ (see Example \ref{ex:odd_twist}), so $s_{k-l-1}$ separates $W_{l+1}$ from the other standard generators in $V_{l+1}$, as required.

    Finally, by the structure of $h$ described by Item \eqref{item:structure of h}, $U_i$ is obtained from $V_k$ by conjugating the generators of $W_k$ by an element of $\centre {A_{S_0}}{t}$. By the previous inductive argument, $t = s_0$ separates $W_k$ from the other standard generators in $V_k$, so this partial conjugation is a Dehn twist. We have shown that $V_k$ is obtained from $V_0 = U_{i-1}$ by a series of edge twists. Furthermore, $U_i$ is obtained from $V_k$ by a Dehn twist, and in particular $U_i$ is an Artin generating set by Lemma~\ref{lem:dehntwistpreservesgraph}. This proves the claim. 
    \end{claimproof}

\noindent Let $K'=\bigcup_i K_i'\subseteq T'$. We are now left to show that $K'$ makes $(T',\Omega')$ into an $S'$-tree. $K'$ has exactly one more edge than $K$ (that is, the edge $e$), thus it is a finite subgraph of~$T'$, and it is connected by construction of the $K'_i$'s, each of which is connected and contains $e$. It follows that $K'$ is a finite subtree of~$T'$. For the sake of clarity, we break down the remaining part of the proof into smaller claims.

    \begin{claim}\label{claim:combdom}
        $K'$ is a combinatorial fundamental domain for $(T',\Omega')$.
    \end{claim}

    \begin{claimproof}[Proof of Claim~\ref{claim:combdom}]
Let $x\in\ver {T'}$. We want to show that the $\Omega'$-orbit of $x$ intersects $K'$ only once. If $\phi(x)$ lies in the $\Omega$-orbit of $\phi(e)$, then $x$ lies in the $\Omega'$-orbit of either $u$ or $v$, which intersect each $K_i'$ only at $u$ and $v$ by how we constructed $K_i'$. Hence $\lvert A\cdot x\cap\ver {K'}\rvert=1$ since $u$ and $v$ belong to distinct $\Omega'$-orbits by Definition~\ref{def:elementarycollapse}. If $\phi(x)$ is not in the orbit of $\phi(e)$, then there exist $k\in K-\{\phi(e)\}$ and $g\in A$ such that $\phi(x)=g\cdot k$. Because $\phi$ is injective away from the orbit of $e$, $\phi^{-1}(k)=k'$ for some $k'$ that either belongs to $K'$ or such that a $\Stab{\Omega'}{v}$-translate of it belongs to $K'$. In both cases, the orbit of $k'$ intersects $K'$. Because $\phi$ is $A$-equivariant, $\phi(x)=g\cdot\phi(k')$ implies that $\phi(x)=\phi(g\cdot k')$; because $\phi$ is injective on these points, it follows that $x=g\cdot k'$. It follows that $K'$ intersects the orbit of $x$ precisely once.  

    Now observe that $\edge{K'}$ contains at least one edge in each $\Omega'$-orbit, since every edge in $T'$ is either in the orbit of $e$ or is sent to an edge of $T$ by the collapsing map, and either way there is a corresponding edge in $K'$ by construction. If $\edge{K'}$ contained two distinct edges in the same orbit, then by looking at the endpoints we could find two distinct vertices of $K'$ in the same orbit, contradicting our conclusion for vertices.
\end{claimproof}

    \begin{claim}\label{claim:stab_S'}
        For every $x\in\ver {K'}\cup \edge {K'}$, $\Stab{\Omega'}{x}$ is a standard $S'$-parabolic subgroup.
    \end{claim}

\begin{claimproof}[Proof of Claim~\ref{claim:stab_S'}]
    We have that $\Stab{\Omega'}{v}=\Stab{\Omega}{\phi(e)}=A_{S_0}$ by construction of $K'$; moreover, as argued in Equation~\eqref{eqn:chain of stabs},  $\Stab{\Omega'} u=\Stab{\Omega} e$ is a standard $S$-parabolic subgroup inside $A_{S_0}$. Since $S_0=S_0'$, both stabilisers are standard $S'$-parabolic subgroups as well.\par
    Let now $x\in\ver{K'}-\{u,v\}$. From injectivity of $\phi$ away from the orbit of $e$ and $A$-equivariance, it follows that $\Stab{\Omega'}{x}=\Stab{\Omega}{\phi(x)}$. By construction of $K'$, there exists $h\in\Stab{\Omega'}{v}$ and $i\in\{1,\dots,n\}$ such that $\phi(x) \in h^{-1}\cdot K_i$. Because $K$ makes $(T,\Omega)$ into an $S$-tree, there exists $U\subseteq S_i$ such that $\Stab{\Omega}{h\cdot \phi(x)}=A_U$, that is, $\Stab{\Omega}{\phi(x)}=h^{-1}A_U h$, which is a standard $S'$-parabolic subgroup of $A$, since $h^{-1}Uh\subseteq h^{-1}S_ih=S_i'$.\par
    For each $x \in \edge{K'}$, $\Stab{\Omega'}{x}$ is the intersection of the stabilisers of the endpoints. The endpoints are in $\ver{K'}$, so the conclusion follows from the fact that, for any Artin system, the intersection of two standard parabolic subgroups is a standard parabolic subgroup \cite{VanDerLek}.
\end{claimproof}

    \begin{claim}\label{claim:S'fix}
        Every element $s'\in S'$ fixes a vertex of $K'$.
    \end{claim}

\begin{claimproof}[Proof of Claim~\ref{claim:S'fix}]
    Every $s'\in S_0'$ fixes $e$, so it fixes its endpoints. If instead $s'\in S'-S_0'$, there exist $h\in\Stab{\Omega'}{v}$, $i\in\{1,\dots,n\}$ and $s\in S_i\subseteq S$ such that $s'= h^{-1}sh$. By construction of $K_i$, $s$ acts elliptically on $T$, fixing (at least) a point of $K_i$ and therefore $s'$ acts elliptically, fixing a point of $h^{-1}\cdot L_i'\subseteq K_i'$.
\end{claimproof} 
\noindent The proof of Proposition~\ref{prop:expansion Gamma-tree} is now complete. \end{proof}

\noindent As a consequence of the previous two propositions, when proving the generalised twist conjecture for an Artin group $A$, it is sufficient to consider Artin generating sets $S$ and $U'$ admitting a tree $T$ which is both an $S$-tree and a $U'$-tree (up to equivariant isometry). This should be thought of as $S$ and $U'$ admitting a common visual splitting, which is what we now prove in Proposition \ref{prop:zigzag}. We will then have everything we need to prove Theorem \ref{thm:refrigidreduction}.

\begin{prop}\label{prop:zigzag}
    Given a one-ended Artin group $A$ with Artin generating sets $S$ and $U$ such that $R_S = R_U$, there exist equivariantly isometric $A$-trees $T_S$ and $T_{U'}$ with infinite cyclic edge stabilisers, such that $T_S$ is an $S$-tree and $T_{U'}$ is a $U'$-tree, for some Artin generating set $U'\sim_D U$. Moreover, if $A$ has at least two 1--chunks, $T_S$ contains an edge.
\end{prop}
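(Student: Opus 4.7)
The strategy is to move between the canonical trees $M_S$ and $M_U$ through the common deformation space $\crush{D}{R} = \crush{D}{R_S} = \crush{D}{R_U}$ (equal by Lemma \ref{lem:crushedDependsOnReflections}), using Propositions \ref{prop:collapse Gamma-tree} and \ref{prop:expansion Gamma-tree} to propagate a compatible generating set at every step. By construction $M_S$ is an $S$-tree, $M_U$ is a $U$-tree, and both have infinite cyclic edge stabilisers generated by separating vertices of the corresponding defining graph.

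If $A$ has a unique big chunk, then $\Gamma_S$ has no separating vertex, $A$ itself is elliptic in every tree of $\crush{D}{R}$, and the deformation space consists of a single point; here $M_S = M_U$ is a single vertex stabilised by $A$, and the claim is trivial with $U' = U$ (the edge condition being vacuous).

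Assume then that $A$ has at least two big chunks. Corollary \ref{lem:crushNonAscending} gives that $\crush{D}{R}$ is non-ascending, and Remark \ref{rem:JS_survives} that $M_S$ and $M_U$ are surviving. By Theorem \ref{thm:GLSurvivingPath} there is an elementary deformation $M_U = T_0, T_1, \ldots, T_n = M_S$ through surviving trees. I will inductively construct Artin generating sets $U = U_0, U_1, \ldots, U_n = U'$, each related to the next by a sequence of elementary and Dehn twists, so that every $T_i$ is a $U_i$-tree. The base case is given. For the inductive step, if $T_{i+1}$ is an elementary collapse of $T_i$, Proposition \ref{prop:collapse Gamma-tree} yields that $T_{i+1}$ is still a $U_i$-tree and I set $U_{i+1} = U_i$; if instead $T_{i+1}$ is an elementary expansion of $T_i$ (so $T_i$ arises from $T_{i+1}$ by a collapse), Proposition \ref{prop:expansion Gamma-tree} furnishes a $U_{i+1} \sim_D U_i$ making $T_{i+1}$ a $U_{i+1}$-tree. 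Taking $T_S \coloneq M_S$ with its canonical $S$-tree structure and $T_{U'} \coloneq M_S$ with the $U'$-tree structure produced by the induction (where $U' \coloneq U_n \sim_D U$) gives the required pair of (equal, hence equivariantly isometric) trees.

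The ``moreover'' statement is immediate from the construction of $M_S$: when $\Gamma_S$ has at least two big chunks it contains a separating vertex, so $M_S$ has both a black and a white vertex joined by an edge, and minimality of the $A$-action is automatic since $M_S$ is the Bass-Serre tree of a non-trivial graph of groups decomposition of $A$. The only real content of the proof is the inductive propagation of the generating set along the deformation path, but this technical point has already been isolated in Propositions \ref{prop:collapse Gamma-tree} and \ref{prop:expansion Gamma-tree}, so the main obstacle has essentially been dealt with before reaching this proposition and what remains is bookkeeping.
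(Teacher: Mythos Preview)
Your proof is correct and follows essentially the same approach as the paper: both start from the canonical trees $M_S$ and $M_U$, use Lemma~\ref{lem:crushedDependsOnReflections} to place them in a common deformation space, invoke Corollary~\ref{lem:crushNonAscending} and Remark~\ref{rem:JS_survives} together with Theorem~\ref{thm:GLSurvivingPath} to connect them through surviving trees, and then propagate the generating set along this path via Propositions~\ref{prop:collapse Gamma-tree} and~\ref{prop:expansion Gamma-tree}. The paper is slightly more explicit in verifying that $M_S$ is an $S$-tree, while you are slightly more explicit about the inductive bookkeeping, but the arguments are otherwise identical.
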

\begin{proof}
    Take $M_S$ and $M_U$ as in Definition~\ref{def:crushed_tree}. One easily checks from Definition~\ref{def:crushed_tree} that if $A$ has at least two 1--chunks, then $M_S$ is minimal and is not just a point; these properties therefore hold for all $A$-trees in the corresponding deformation spaces.
    
    We claim the tree $M_S$ is an $S$-tree. Take the fundamental domain $K$ to be the lift of the graph of groups $\mathcal{M}_S$ described in Definition~\ref{def:crushed_tree} (which is possible since the underlying graph is a tree). Every standard 1--chunk $S$-parabolic subgroup occurs as a point stabiliser in $K$, and every $s\in S$ belongs to some standard 1--chunk parabolic subgroup, so every $s\in S$ fixes a point in $K$. Conversely, every simplex stabiliser in $K$ is either a standard 1--chunk $S$-parabolic subgroup, or generated by some $s\in S$ separating $\Gamma_S$, so is in particular a standard $S$-parabolic subgroup. Likewise, $M_U$ is a $U$-tree.

    Now, since $R_S = R_U$, Proposition~\ref{lem:crushedDependsOnReflections} gives that $M_S$ and $M_U$ are in the same deformation space $\crush D {R}$, where $R=R_S$. If $A$ has a single 1--chunk, then both $M_S$ and $M_U$ are a single point, and therefore there is an equivariant isometry between the underlying trees (which are single points with trivial $A$-actions). Otherwise $\crush D {R}$ is non-ascending by Corollary~\ref{lem:crushNonAscending}. Moreover, both $M_S$ and $M_U$ are surviving by Remark~\ref{rem:JS_survives}, so Theorem \ref{thm:GLSurvivingPath} produces a sequence of elementary collapses and expansions taking $M_U$ to a tree $T'$ which is equivariantly isometric to $M_S$. By applying Propositions~\ref{prop:collapse Gamma-tree} and~\ref{prop:expansion Gamma-tree}, we can realise $T'$ as a $U'$-tree, where $U'$ is as required by the statement.
\end{proof}

\begin{proof}[Proof of Theorem~\ref{thm:refrigidreduction}] Recall that we are given a one-ended Artin group $A$ and an Artin system $(A,S)$ such that, for every $X\subseteq S$ spanning a 1--chunk, $(A_X,X)$ satisfies the strong twist conjecture. Our goal is to prove that every Artin generating set $U$ for $A$ such that $R_S=R_U=R$ is generalised twist equivalent to $S$. We proceed by induction on the number of 1--chunks of $A$. The base case is immediate.

Before moving to the inductive step, let us clarify the inductive hypothesis. Let $\mathcal V$ be the collection of proper subsets $V\subsetneq S$ such that $A_V$ is one-ended and $V$ is of the form $V=X_1\cup\ldots\cup X_i$, where each $X_j$ spans a 1--chunk in $\Gamma_S$. By induction we can assume that, for every $V\in \mathcal V$, the Artin system $(A_V,V)$ satisfies the generalised strong twist conjecture.

Now, replacing $U$ by some $U'\sim_D U$ does not change the set of reflections, as argued in Definition~\ref{defn:twist_eq} and Lemma~\ref{lem:dehntwistpreservesgraph};  therefore, in light of Proposition~\ref{prop:zigzag}, we may assume that there are equivariantly isometric $A$-trees $(T_S, \Omega_S)$ and $(T_U, \Omega_U)$  with infinite cyclic edge stabilisers, such that $T_S$ is an $S$-tree and $T_U$ is a $U$-tree (we henceforth suppress the actions in our notation). Write $K_S$ and $K_U$ to be the corresponding combinatorial fundamental domains.

\par\medskip
    We now use the $S$-tree and $U$-tree structures to produce visual splittings of $A$ with respect to $S$ and $U$ such that both decompositions have the same factors, towards applying the inductive hypothesis to these factors. By Proposition~\ref{prop:zigzag}, $T_S$ has at least one edge, since $A$ has at least two 1--chunks; so fix an arbitrary edge $e$ of $K_S$, and let $\Stab{\Omega_S}{e} = \langle r \rangle$ for some $r \in S$. Take $f: T_S \rightarrow T_U$ to be an equivariant isometry. Up to conjugating $U$, we may assume that $f(e) \in \edge{K_U}$. By equivariance of $f$ it follows that $\Stab{\Omega_U}{f(e)} = \langle r \rangle$, and so either $r \in U$ or $r^{-1} \in U$. In fact, it must be that $r \in U$. If this was not the case, then since $r \in R_S = R_U$, there would be $u \in U$ conjugate to $r$. However $U$ would have one generator conjugate to the inverse of another, and this contradicts the existence of a homomorphism $A_U\to \Z$ mapping every generator to $1$.

    Write $\overline{T_S}$ (resp. $\overline{T_U}$) for the equivariant quotient $A$-tree obtained by collapsing every edge in $T_S$ (resp. $T_U$) not in the orbit of $e$ (resp. $f(e)$) to a point. Notice that these are in general not elementary collapses: in general $\overline{T_S}$ has more elliptic elements than $T_S$ and is in a different deformation space. By a routine diagram chase one sees that $f$ induces an equivariant isometry $\overline{f}\colon \overline{T_S}\to \overline{T_U}$.

    We write $S = S_1 \cup S_2$, where each $S_i$ is the set of generators fixing a vertex of $K_S$ on one side of $e$; this can be done since $K_S$ is a fundamental domain making $T_S$ an $S$-tree. Notice that $S_1 \cap S_2 = \{r\}$. We now recognise $\overline{T_S}$ as the Bass-Serre tree of the visual splitting corresponding to $S_1$ and $S_2$:

    \begin{claim}\label{claim:collapsedBassSerreTree}
        The $A$-tree $\overline{T_S}$ is the Bass-Serre tree for the splitting $A = A_{S_1} *_{\langle r \rangle} A_{S_2} $.
    \end{claim}
    \begin{claimproof}
        By equivariance of the quotient map $T_S\to \overline{T_S}$, we see that $\overline{T_S}$ has one orbit of edges. We claim it has two orbits of vertices. Suppose not, then, writing $e = \{x,y\}$ and $\{\overline x,\overline y\}$ for the image of $e$ in $\overline{T_S}$, there exists $g\in A$ such that $\overline x=g\overline y$. Since $\overline{T_S}$ is obtained from $T_S$ by collapsing all edges not in the orbit of $e$, there exists a path in $T_S$ from $x$ to $gy$, not passing through any edge in the orbit of $e$. However, this implies that $T_S / A$ is not simply connected, contradicting the fact that $T_S$ has a combinatorial fundamental domain by Proposition~\ref{lem:cfdTreeQuotient}.

        The above discussion identifies $\overline{T_S}$ with the Bass-Serre tree for the splitting $A = \Stab{A}{\overline{x}} *_{\Stab{A}{\overline{e}}} \Stab{A}{\overline{y}}$.
        Notice that $\Stab{A}{\overline{e}} = \langle r \rangle$, since the map $T_S\to \overline{T_S}$ is $A$-equivariant and injective on the interior of $e$.

        We now consider vertex stabilisers in $\overline{T_S}$. Without loss of generality, suppose $x$ is on the side of $e$ corresponding to $S_1$. Since $K_S$ contains only one edge from each orbit, all of the edges in $K_S$ on this side of $e$ are collapsed to $\overline{x}$. Each $s \in S_1$ fixes a vertex in $K_S$ on this side of $e$, and therefore fixes $\overline{x}$ by equivariance; this shows that $A_{S_1} \leq \Stab{A}{\overline{x}}$, and analogously $A_{S_2} \leq \Stab{A}{\overline{y}}$.

        Now let $B$ be the Bass-Serre tree for the splitting $A = A_{S_1} *_{\langle r \rangle} A_{S_2} $, and consider the $A$-equivariant, simplicial map $b\colon B\to \overline{T_S}$ mapping each coset $gA_{S_1}$ (resp. $gA_{S_2}$, $g\langle r\rangle$) to the corresponding coset $g\Stab{A}{\overline{x}}$ (resp. $g\Stab{A}{\overline{y}}$, $g\langle r\rangle$). This map is well-defined because $A_{S_1} \leq \Stab{A}{\overline{x}}$ and $A_{S_2} \leq \Stab{A}{\overline{y}}$, and it   preserves the incidence relation between edges and vertices as the latter corresponds to inclusion of cosets.
        
        Now $b$ is a bijection at the level of edges, as the latter correspond to $A$-cosets of $\langle r\rangle$ in both trees. At the level of vertices, $b$ is surjective because every vertex of $\overline{T_S}$ belongs to some edge, and it is injective because if two cosets of, say, $A_1$ map to the same coset of $\Stab{A}{\overline{x}}$, then the path between these cosets in $B$ maps to a backtracking path in $\overline{T_S}$, contradicting injectivity at the level of edges. Thus $b\colon B\to \overline{T_S}$ is an $A$-equivariant isometry, and in particular $\Stab{A}{\overline{x}} = A_{S_1}$ and $\Stab{A}{\overline{y}} = A_{S_2}$ as required.
    \end{claimproof}

    The arguments above apply verbatim with $U$ in place of $S$, substituting $f(e)$ for $e$ to obtain a splitting $A = A_{U_1} *_{\langle r \rangle} A_{U_1}$, where $U = U_1 \cup U_2$ and $U_1\cap U_2=\{r\}$. Notice that, by the equivariance of $\overline{f}$, the stabilisers of the endpoints of the image of $e$ in $\overline{T_S}$ and $\overline{T_U}$ are the same subgroups of $A$, so (up to exchanging $U_1$ and $U_2$), $A_{S_i} = A_{U_i}$ for $i \in \{1,2\}$. We denote these subgroups simply by $A_i$.
    
    The next two claims will allow us to use the inductive hypothesis:

    \begin{claim}\label{claim:S1_in_V} $S_1,S_2\in \mathcal V$.
    \end{claim}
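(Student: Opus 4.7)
My plan is to verify, for each of $S_1$ and $S_2$, the three defining conditions of $\mathcal{V}$: properness ($S_i\subsetneq S$), one-endedness of $A_{S_i}$, and a decomposition of $S_i$ as a union of subsets spanning big chunks of $\Gamma_S$. I focus on $S_1$; the argument for $S_2$ is symmetric.

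First, I would show $|S_1|,|S_2|\ge 2$, which, together with the identity $|S_1|+|S_2|=|S|+1$, immediately yields $S_i\subsetneq S$. Since $T_S$ is minimal by Proposition~\ref{prop:zigzag}, and since collapsing edge orbits preserves minimality of an $A$-action on a tree, $\overline{T_S}$ is a minimal $A$-tree containing an edge. As $\overline{T_S}$ is the Bass--Serre tree of the splitting $A=A_{S_1}*_{\langle r\rangle}A_{S_2}$, this minimality forces the splitting to be non-degenerate, i.e.\ $A_{S_i}\ne\langle r\rangle$ for both $i$. By van der Lek's theorem, the standard parabolic $A_Y$ is determined by $Y\subseteq S$, so $A_{S_i}=\langle r\rangle$ would force $S_i=\{r\}$; ruling this out gives $|S_i|\ge 2$.

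Next I would check that $S_1$ is a union of subsets each spanning a big chunk of $\Gamma_S$. The amalgamated product structure admits no Artin relation between $S_1\setminus\{r\}$ and $S_2\setminus\{r\}$, so $\Gamma_S$ has no edges between these two non-empty sets; in particular, $r$ is a separating vertex of $\Gamma_S$. Since big chunks are by definition induced subgraphs without separating vertices, no big chunk of $\Gamma_S$ may span both sides of $r$, so every big chunk of $\Gamma_S$ lies entirely in $S_1$ or entirely in $S_2$ (and may contain $r$ on either side). Each $s\in S_1\setminus\{r\}$ belongs to some big chunk, which is therefore contained in $S_1$. As for $r$ itself, connectedness of $\Gamma_S$ together with $S_1\setminus\{r\}\ne\emptyset$ provides an edge in $\Gamma_S$ from $r$ to some vertex of $S_1\setminus\{r\}$; the big chunk containing this edge lies in $S_1$ and covers $r$. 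Hence $S_1$ is a union of big-chunk subsets of $\Gamma_S$.

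Finally, one-endedness of $A_{S_1}$ follows from Remark~\ref{rem:one_end}: I already have $|S_1|\ge 2$, and $\Gamma_{S_1}$ consists of $r$ together with some components of $\Gamma_S\setminus\{r\}$, each of which is connected and joined to $r$ by an edge (again by connectedness of $\Gamma_S$), so $\Gamma_{S_1}$ is connected. The main subtlety in the argument is the big-chunk decomposition step, which hinges essentially on big chunks being \emph{defined} as subgraphs with no separating vertex; all the other pieces fall out readily from the tree-theoretic and visual-splitting structure already established.
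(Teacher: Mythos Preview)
Your proof is correct, and the strategy for properness and one-endedness is essentially the same as the paper's (both invoke minimality of $\overline{T_S}$ and connectedness of $\Gamma_{S_i}$). The genuine difference lies in how you show that each big chunk of $\Gamma_S$ is contained in some $S_i$. The paper argues via the deformation space: since $T_S$ lies in $\crush{D}{R}$, it has the same elliptic subgroups as $M_S$ by Theorem~\ref{thm:forester}, so for every big chunk $X$ the parabolic $A_X$ fixes a point of $T_S$, and hence all of $X\setminus\{r\}$ lies on one side of $e$ in $K_S$. You instead argue purely combinatorially: the amalgam $A_{S_1}*_{\langle r\rangle}A_{S_2}$ forbids edges of $\Gamma_S$ between $S_1\setminus\{r\}$ and $S_2\setminus\{r\}$ (e.g.\ by normal forms or ping-pong on $\overline{T_S}$), so $r$ is separating in $\Gamma_S$, and no big chunk can straddle a separating vertex by definition. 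Your route is more self-contained and avoids invoking Forester's theorem at this point; the paper's route more directly exploits the machinery already in place. For connectedness of $\Gamma_{S_i}$ the paper traces paths to $r$ through consecutive big chunks, whereas your description of $\Gamma_{S_i}$ as $r$ together with certain components of $\Gamma_S\setminus\{r\}$ is a cleaner shortcut once the no-edges fact is established.
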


    \begin{claimproof}[Proof of Claim~\ref{claim:S1_in_V}]
        We first notice that $S_1, S_2 \neq  S$, since otherwise $\overline{T_S}$, and hence $T_S$, would not be minimal, contradicting that it is an $A$-tree and hence minimal (Notation \ref{notation:action_on_trees}). Furthermore, each $X\subseteq S$ spanning a 1--chunk in $S$ lies in either $S_1$ or $S_2$. Indeed, $A_X$ fixes a point $p$ in $T_S$, since $T_S$ is in the same deformation space as $M_S$, and therefore has the same elliptic subgroups by Theorem~\ref{thm:forester}. This means that each $x\in X-\{r\}$ must fix a point $q\in K_S$. Notice that $q$ must belong to the same connected component of $T_S-e$ as $p$, since otherwise $x$ would fix the geodesic connecting $p$ and $q$ and in particular it would fix $e$.

        Finally, $A_i=A_{S_i}$ is one-ended since $\Gamma_{S_i}$ is connected. To see this, we shall prove that, for every $x\in S_i$ and any simple path  $\gamma\subseteq \Gamma_S$ connecting $x$ to $r$, we have that $\gamma\subseteq \Gamma_{S_i}$. Indeed, notice that $x$ and the first vertex of $\gamma$ after it, call it $z$, must belong to a common 1--chunk, say spanned by $X\subseteq S$. Moreover, by the above argument $X$ must belong to one of $S_1$ and $S_2$, and it must be that $X\subseteq S_i$ since $x\neq r$ belongs to $S_i$. Hence $z\in S_i$ as well. If $z=r$ we stop; otherwise we repeat this procedure, eventually showing that $\gamma\subseteq \Gamma_{S_i}$, as required.
    \end{claimproof}
    
    \begin{claim}\label{claim:reflections_in_chunk}
       For $i=1,2$ let $R_{S_i}^{A_i}$ and $R_{U_i}^{A_i}$ be the reflection sets for $S_i$ and $U_i$ in~$A_i$. Then $R_{S_i}^{A_i}=R_{U_i}^{A_i}$.
    \end{claim}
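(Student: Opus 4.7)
The plan is to prove both inclusions of $R_{S_i}^{A_i}=R_{U_i}^{A_i}$; by the symmetric role of $S$ and $U$ it suffices to establish one direction, and this in turn reduces to showing that every $s\in S_i$ is $A_i$-conjugate to some $u\in U_i$. From $R_S=R_U$ we directly obtain $s=gug^{-1}$ for some $u\in U$ and $g\in A$; the real task is to upgrade this to an equation with $u\in U_i$ and a conjugator inside $A_i$.

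The main tool is the equivariant identification of the trees $\overline{T_S}$ and $\overline{T_U}$ provided by Proposition~\ref{prop:zigzag}. Work in this common tree, and let $v_i$ denote the vertex whose stabiliser is $A_i$. Every generator of $U$ fixes some vertex of the fundamental domain $K_U$; let $v_u$ be such a vertex for our $u$, noting that only $r$ fixes the central edge and that $r$ lies in $U_1\cap U_2$. Since $s$ fixes both $v_i$ and $g\cdot v_u$, and its fixed-point set is a subtree, two scenarios arise. If $g\cdot v_u=v_i$, then $v_u=v_i$ (because $v_1,v_2$ lie in distinct $A$-orbits, as already observed for $\overline{T_S}$), hence $u\in U_i$ and $g\in\Stab{A}{v_i}=A_i$, which closes the argument. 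Otherwise, $s$ must fix an edge of the tree, so $s\in h\langle r\rangle h^{-1}$ for some $h\in A$; writing $s=hr^kh^{-1}$ and abelianising, the same primitivity-plus-abelianisation argument already used in Section~\ref{sec:proof_of_refrigid} (to rule out $r^{-1}\in U$) forces $k=1$, so that $s$ is $A$-conjugate to $r$.

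The main obstacle is this remaining case: deducing $A_i$-conjugacy of the two standard generators $s,r\in S_i\subseteq S$ from their $A$-conjugacy, which in general is stronger. This is handled via Paris's description of conjugate standard generators \cite[Corollary~4.2]{Pparabolics}, which produces a sequence $s=s_0,s_1,\dots,s_n=r$ in $S$ whose consecutive pairs span odd dihedral parabolics. Because $r$ is a separating vertex of $\Gamma_S$ and $S_i-\{r\}$ is a union of components of $\Gamma_S-\{r\}$ (a consequence of the visual splitting structure used to define $S_1$ and $S_2$), truncating the sequence at the first occurrence of $r$ keeps it entirely inside $\Gamma_{S_i}$. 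Reading the same corollary inside the Artin system $(A_i,S_i)=(A_{S_i},S_i)$ then yields an $A_i$-conjugacy between $s$ and $r\in U_i$, completing the proof.
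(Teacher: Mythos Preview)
Your argument is correct, but it takes a genuinely different route from the paper's proof. The paper uses a single algebraic device: since $\Gamma_{U_i}$ is a union of big chunks of $\Gamma_U$, there is a retraction $\rho\colon A\to A_i$ sending every $u\in U_i$ to itself and every $u\in U\setminus U_i$ to $r$. Applying $\rho$ to the equation $s=gug^{-1}$ immediately yields $s=\rho(g)\,\rho(u)\,\rho(g)^{-1}$ with $\rho(u)\in U_i$ and $\rho(g)\in A_i$, and the claim follows in one line.

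Your approach instead works geometrically in the Bass--Serre tree $\overline{T_S}\cong\overline{T_U}$, splitting into the case where $g\cdot v_u=v_i$ (handled by the two-orbit structure) and the case where $s$ fixes an edge, whence $s$ is $A$-conjugate to $r$ via the abelianisation trick. You then invoke \cite[Corollary~4.2]{Pparabolics} to pull the odd path from $s$ to $r$ back into $\Gamma_{S_i}$ using that $r$ separates $\Gamma_S$, which gives the $A_i$-conjugacy. This is a perfectly valid argument; the retraction proof is shorter and avoids both the case split and the appeal to Paris, while your tree argument is more self-contained and makes the geometry of the situation more visible. One small stylistic point: in your final step you do not really need to ``read the corollary inside $(A_i,S_i)$''---you already have an explicit odd path in $\Gamma_{S_i}$, and the product of the corresponding Garside elements lies in $A_i$ and conjugates $s$ to $r$ directly.
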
 

    \begin{claimproof}[Proof of Claim~\ref{claim:reflections_in_chunk}]
    Let $s\in S_i$, and we have to find some $u'\in U_i$ which is conjugate to $s$ in $A_i$. Since $R_S=R_U$, there is $u\in U$ and $g\in A$ such that $s=gug^{-1}$. Moreover, notice that $\Gamma_{U_i}$ is a union of 1--chunks of $\Gamma_U$ (this follows as in Claim~\ref{claim:S1_in_V}, applied to $U$), so by e.g. \cite[Remark 3.6]{JonManSar_JSJSpl}, there exists a retraction $\rho\colon A\to A_i$ mapping every generator $v\in U$ to
    $$\rho(v)=\begin{cases}
        v&\mbox{ if }v\in U_i;\\
        r&\mbox{ if }v\not \in U_i.
    \end{cases}$$
    Since $\rho$ is the identity on $A_i$, we get that $s=\rho(s)=\rho(g)\rho(u)\rho(g)^{-1}$. Hence $s$ is conjugate to $\rho(u)\in U_i$ by $\rho(g)\in A_i$, as required.
    \end{claimproof}
    \noindent By induction, there exists $\psi: S_1 \rightarrow A_1$ which is a generalised twist equivalence between $S_1$ and $U_1$.

    \begin{claim}\label{claim:hatpsi}
        $\psi$ extends to a generalised twist equivalence $\hat\psi\colon S\to A$; moreover $S'\coloneq\hat\psi(S)=U_1\cup hS_2h^{-1}$, where $h\in A_1$ is such that $\psi(r)=hrh^{-1}$.
    \end{claim}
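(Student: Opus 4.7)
The plan is to decompose $\psi$ as a finite composition $\psi_k \circ \cdots \circ \psi_1$ of elementary twists, Dehn twists, and conjugations in $(A_1, S_1)$, and to lift each move to an analogous elementary move on $(A, S)$, while carefully tracking how the $S_2$-side gets conjugated along the way. For $0 \le i \le k$ set $S_1^{(i)} = \psi_i \circ \cdots \circ \psi_1(S_1)$ and $r^{(i)} = \psi_i \circ \cdots \circ \psi_1(r) \in S_1^{(i)}$, so that $S_1^{(0)} = S_1$, $S_1^{(k)} = U_1$, and $r^{(k)} = \psi(r) = h r h^{-1}$. We will inductively construct an Artin generating set $S^{(i)} = S_1^{(i)} \cup S_2^{(i)}$ of $A$, together with an element $g_i \in A_1$, satisfying $r^{(i)} = g_i r g_i^{-1}$ and $S_2^{(i)} = g_i S_2 g_i^{-1}$, and such that $S^{(i)}$ is obtained from $S^{(i-1)}$ via a single elementary twist, Dehn twist, or conjugation whose effect on $S_1^{(i-1)}$ is exactly $\psi_i$.

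The construction relies on the key fact that $r^{(i-1)}$ separates $\Gamma_{S^{(i-1)}}$ with $S_2^{(i-1)} - \{r^{(i-1)}\}$ as one of the two sides; consequently, no element of $S_2^{(i-1)} - \{r^{(i-1)}\}$ is adjacent in $\Gamma_{S^{(i-1)}}$ to any element of $S_1^{(i-1)} - \{r^{(i-1)}\}$. Hence, if $\psi_i$ is an elementary twist in $(A_1, S_1^{(i-1)})$ along a spherical indecomposable $J \subseteq S_1^{(i-1)}$ with twisted side $B$, we lift it to an elementary twist on $(A, S^{(i-1)})$ along the same $J$ by placing $S_2^{(i-1)} - \{r^{(i-1)}\}$ on the side of the partition containing $r^{(i-1)}$; the separation property guarantees this yields a valid partition as in Definition~\ref{defn:twist_eq}. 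The lift conjugates $S_2^{(i-1)}$ by $\alpha_i = \Delta_J$ if $r^{(i-1)} \in B$, and by $\alpha_i = 1$ otherwise. Dehn twists in $(A_1, S_1^{(i-1)})$ lift in the same manner, and conjugations lift trivially; in each case we set $g_i = \alpha_i g_{i-1}$, and the inductive invariants follow immediately.

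After $k$ steps we arrive at $S^{(k)} = U_1 \cup g_k S_2 g_k^{-1}$ with separating vertex $r^{(k)} = g_k r g_k^{-1} = h r h^{-1}$. Hence $c \coloneqq g_k^{-1} h \in A_1$ centralises $r$, and $d \coloneqq g_k c g_k^{-1} = h g_k^{-1}$ centralises $r^{(k)}$. To conclude, we perform one final Dehn twist in $(A, S^{(k)})$ about $r^{(k)}$ that fixes $U_1 - \{r^{(k)}\}$ and conjugates $g_k S_2 g_k^{-1} - \{r^{(k)}\}$ by $d$. Using $d g_k = h$ and $g_k^{-1} d^{-1} = h^{-1}$, a direct computation shows the resulting generating set is $U_1 \cup h S_2 h^{-1}$, as desired.

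The main technical obstacle is the verification of the inductive lifting: for each type of move $\psi_i$, one must consider the cases where $r^{(i-1)}$ lies in $J$, in $J^\bot$, or on one side of the twist (and similarly for Dehn twists), and check that the prescribed lift is a valid elementary move on $(A, S^{(i-1)})$ whose restriction to $S_1^{(i-1)}$ agrees with $\psi_i$. Each subcase however reduces quickly to the separation property of $r^{(i-1)}$ in $\Gamma_{S^{(i-1)}}$.
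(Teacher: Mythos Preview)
Your proposal is correct and follows essentially the same strategy as the paper: decompose $\psi$ into elementary moves inside $(A_1,S_1)$ and lift each one to an elementary move of the same type on the whole generating set, using that $r^{(i-1)}$ separates $S_2^{(i-1)}$ from $S_1^{(i-1)}$ at every stage. The case analysis you outline (according to whether $r^{(i-1)}$ lies in $J$, in $J^\bot$, or on one of the two sides) is exactly what the paper sketches when it treats the Dehn twist case explicitly and then says ``a similar argument, with the required adjustments'' handles elementary twists.

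The only minor difference is your final Dehn twist by $d=hg_k^{-1}$. In the paper the element $h$ is not given in advance but is \emph{produced} by the construction; that is, the claim is read existentially, and one simply takes $h\coloneqq g_k$. Your extra step is therefore unnecessary (though harmless), and it shows that the conclusion in fact holds for \emph{any} $h\in A_1$ with $\psi(r)=hrh^{-1}$, which is slightly stronger than what is needed downstream.
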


    \begin{claimproof}[Proof of Claim~\ref{claim:hatpsi}] Write $\psi$ as a sequence $\psi_l\circ\ldots\circ\psi_1$ of elementary twists, conjugations, and Dehn twists. By an inductive argument, it is enough to show that $\psi_1$ extends to an elementary twist, conjugation, or Dehn twist of $A$ with respect to~$S$. If $\psi_1$ is the conjugation by some $h_1\in A_1$, then let $\hat{\psi}_1: S \rightarrow A$ be the conjugation by $h_1$, and we have nothing to show. 
    
    Next, suppose $\psi_1$ is a Dehn twist of $(A_1,S_1)$. More precisely, let $v\in S_1$ and $B,C\subset S_1$ such that $A_1=A_{B\cup\{v\}}*_{\langle v\rangle} A_{C\cup \{v\}}$, and let $\psi_1$ be the Dehn twist of $B$ around $v$ by some $g\in A_{B\cup\{v\}}\cup A_{C\cup \{v\}}$ centralising $v$. Notice that $v$ separates $\Gamma_S$ as well, since $S_1\cap S_2=\{r\}$; furthermore, either $v=r$ or $S_2$ is in the same connected component of $\Gamma_{S}-\{v\}$ as $r$. If $\psi_1(r)=r$ (that is, if either $v=r$ or $r\in C$), we can define $\hat\psi_1\colon S\to A$ to be the identity on $S_2$, so that $\hat\psi_1$ is the Dehn Twist of $B$ around $v$ by $g\in A_{B\cup\{v\}}\cup A_{C\cup S_2\cup \{v\}}$. Otherwise $\psi_1(r)=grg^{-1}$, and if we define $\hat\psi_1$ by mapping every $s\in S_2$ to $gsg^{-1}$ we get the Dehn twist of $B\cup S_2$ around $v$ by $g\in A_{B\cup S_2\cup\{v\}}\cup A_{C\cup \{v\}}$. 
    
    A similar argument, with the required adjustments, proves that if $\psi_1$ is an elementary twist then it can be extended to an elementary twist $\hat\psi_1\colon S\to A$, thus proving the claim.\end{claimproof}

    \begin{claim}\label{claim:first_gte}
        $S'\sim_D U_1 \cup S_2$; moreover $r$ separates $\Gamma_{U_1\cup S_2}$.
    \end{claim}

    \begin{claimproof}
    Notice that $r, hrh^{-1} \in U_1$ are conjugate in $A_1$. By Lemma~\ref{lem:conjugatingStandardGenerators} there is a path of odd edges $\{e_1,\ldots, e_n\}$ in $\Gamma_{U_1}$ from $hrh^{-1}$ to $r$. For $i =1, \dots, n$ write $\Delta_i\in A_{U_1}=A_1$ for the Garside elements associated to the $e_i$, so that $\Delta_n\dots\Delta_1h$ centralises~$r$. 
    
    As in the proof of Claim~\ref{claim:twist_by_steps}, successively conjugating $hS_2h^{-1}$ by $\Delta_1$, $\Delta_2$, and so on, is a sequence of twists along separating edges (meaning that every $e_i$ is separating in the defining graph of $U_1 \cup \Delta_{i-1}\ldots\Delta_1 hS_2h^{-1}(\Delta_{i-1}\ldots\Delta_1)^{-1}$). Hence $$S'=U_1 \cup hS_2h^{-1} \sim U_1 \cup (\Delta_n\dots\Delta_1)hS_2h^{-1}(\Delta_n\dots\Delta_1)^{-1} =: S'',$$ and we note that $\Gamma_{S''}$ has $r$ as a separating vertex. Since $\Delta_n\dots\Delta_1h$ centralises $r$ and belongs to $A_{U_1}=A_1$, we can Dehn twist $S''$ by this element to obtain $U_1 \cup S_2$. We have now seen that $S_1 \cup S_2 \sim_D U_1 \cup S_2$. 
    
    For the moreover part, notice that $U_1 \cup S_2$ was obtained from $S''$ via a Dehn twist around $r$. Since Dehn twists preserve the defining graph by Lemma~\ref{lem:dehntwistpreservesgraph}, we see that $r$ is again separating in $\Gamma_{U_1 \cup S_2}$, as required.
    \end{claimproof}

    \noindent We are left to prove that $U_1 \cup S_2\sim_D U_1\cup U_2$. Again in view of Claims~\ref{claim:S1_in_V} and~\ref{claim:reflections_in_chunk}, the inductive hypothesis produces a generalised twist equivalence $\psi'$ between $S_2$ and $U_2$ in $A_2$. Since $r$ is separating in $U_1 \cup S_2$, we can then argue exactly as in Claim~\ref{claim:hatpsi} to extend $\psi'$ to a generalised twist equivalence $\hat\psi'\colon U_1 \cup S_2\to A$ with image $hU_1h^{-1}\cup U_2$, where $h\in A_2$ is such that $\psi'(r)=hrh^{-1}$. Finally, the arguments in the proof Claim~\ref{claim:first_gte} run verbatim to show that $\psi'(S)\sim_D U_1\cup U_2$, thus  completing the proof of Theorem~\ref{thm:refrigidreduction}. 
\end{proof}

\section{A genuine combination theorem using ribbons}\label{sec:strongtwist_for_some}
In Section~\ref{sec:red twist conj} we proved that an Artin system $(A,S)$ satisfies the generalised strong twist conjecture, provided that its 1--chunk parabolic subgroups satisfy the strong twist conjecture. In order to to improve the conclusion of Theorem~\ref{thm:refrigidreduction} to the genuine strong twist conjecture, we need to guarantee that, with respect to any Artin generating set that is twist-equivalent to $S$, Dehn twists around separating vertices can be written as a composition of elementary twists and conjugations. We prove that this condition is satisfied, whenever sufficiently many parabolic subgroups of $(A,S)$ enjoys the \emph{vertex ribbon property}. The latter describes the elements that conjugate standard generators via ribbons, which intuitively can be thought as minimal conjugating elements. The notion of a ribbon was introduced by Paris in~\cite{Pparabolics} in broader generality, and then studied in detail by Godelle (see Theorem~\ref{thm:properties_Godelle} below).

\begin{defn}[ribbons]
    \label{def:ribbon}
    Let $(A,S)$ be an Artin system and let $x,y\in S$. An element $g\in A$ such that $x=gyg^{-1}$ is a \emph{basic $(x,y)$-ribbon} (with respect to~$S$) if one of the following conditions hold:
    \begin{enumerate}
        \item the elements $x$ and $y$ are distinct, $m_{xy}\in\mathbb N_{\ge 3}$ is odd and $g=\Delta_{xy}$ (or its inverse);
        \item the elements $x$ and $y$ coincide and one of the following holds:
        \begin{enumerate}
            \item $g=x$ (or its inverse);
            \item there is $t\in S$ such that $m_{xt}\in\mathbb N_{\ge4}$ is even and $g=\Delta_{xt}$ (or its inverse);
            \item there is $t\in S$ such that $m_{xt} = 2$ and $g=t$ (or its inverse).
        \end{enumerate}
    \end{enumerate}
    An element $g=g_1\cdots g_n$ such that $x=gyg^{-1}$ is an \emph{$(x,y)$-ribbon} (with respect to $S$) if there exist $x_0,\dots, x_{n}\in S$ such that $x_0=x$, $x_{n}=y$ and, for every $i\in\{1,\dots,n\}$, $g_i$ is a basic $(x_{i-1},x_{i})$-ribbon. We denote the set of $(x,y)$-ribbons with respect to~$S$ by $\ribb{S}{x}{y}$.
    
    We say that the pair $(x,y)$ satisfies the \emph{vertex ribbon property} (in $(A,S)$) if, for every $g\in A$, if $x=gyg^{-1}$, then $g\in \ribb{S}{x}{y}$. We say that $(A,S)$ satisfies the \emph{vertex ribbon property} if every pair $(x,y)$ with $x,y\in S$ satisfies the ribbon property with respect to $S$. 
\end{defn}

\begin{rem}
    We will freely use that, if $g \in \ribb{S}{s}{t}$ and $h \in \ribb{S}{t}{r}$, then $gh \in \ribb{S}{s}{r}$ and $g^{-1} \in \ribb{S}{t}{s}$. 
\end{rem}

\begin{rem}\label{rem:basic_and_elm}
    The definition of ribbons that we are using is slightly different, albeit equivalent, to the one introduced by Paris and further studied by Godelle. In their original definition, ribbons are constructed as a concatenation of \emph{positive elementary ribbons} and their inverses~\cite[Definition 0.2]{godelle2003parabolic}, which correspond to conjugators of shortest length with respect to the partial order induced by the Garside structure every irreducible parabolic subgroup of spherical type is endowed with. It is straightforward from the definitions to check that every elementary ribbon can be written as a product of basic ribbons and vice versa. In particular, the (vertex) ribbon property can be defined using either basic ribbons or positive elementary ribbons, and the two properties coincide.
\end{rem}

\begin{rem}\label{rem:aa_ribbon}
    Let $(A,S)$ be an Artin system and $v\in S$. From Definition~\ref{def:ribbon}, it follows that, for every $(v,v)$-ribbon $h\in A$, there exists a sequence $(a_i,b_i)_{i=0}^n$ of pairs of elements in $S$, such that the following hold.
\begin{itemize}
    \item $a_0=b_n=v$.
    \item For every $i$, either $\{a_i,b_i\}$ span a dihedral Artin subgroup, and we set $m_i=m_{a_ib_i}$, or $a_i=b_i$, and with a little abuse of notation we set $m_i=1$.
    \item $a_{i+1}=a_i$ if $m_i$ is even, and $a_{i+1}=b_i$ if $m_i$ is odd.
    \item Set $t_i=b_i$ if $m_i\le 2$, while $t_i=\Delta_{a_ib_i}$ if $m_i\ge 3$.
    \item There exist $\varepsilon_i\in \{\pm 1\}$ such that $h= t_n^{\varepsilon_n}\ldots t_0^{\varepsilon_0}$.
\end{itemize}
In other words, the $\{a_i\}_{i=0}^n$ are the vertices of an odd loop $\gamma\subseteq \Gamma_S$, to which some even ``spikes" are glued, as in Figure~\ref{fig:loop_and_spikes}.
\end{rem}

\begin{figure}[htp]
    \centering
    \includegraphics[width=\textwidth, alt={Example of a loop with spikes associated to a ribbon conjugating a vertex to itself.}]{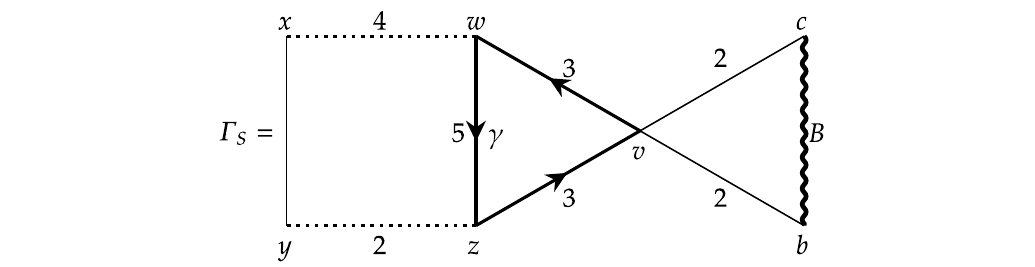}
    \caption{An example of a ``loop with spikes" associated to a $(v,v)$-ribbon, as in Remark~\ref{rem:aa_ribbon}. In the picture the ribbon is $h=\Delta_{vz}y^{-1}\Delta_{wz}\Delta_{wx}\Delta_{vw}^{-1}$, so we set $(a_0,b_0)=~(v,w)$, $(a_1,b_1)=~(w,x)$, $(a_2,b_2)=~(w,z)$, $(a_3,b_3)=~(z,y)$, and $(a_4,b_4)=~(z,v)$. The oriented path $\gamma$ corresponds to the collection of basic ribbons along odd edges, while the dotted ``spikes" are given by the basic ribbons along even edges. In the proof of Proposition~\ref{prop:aa_ribbon_are_twists}, we also consider a subset $B\subset S$ which $v$ separates from the rest of the vertices (here, the undulated line).}
    \label{fig:loop_and_spikes}
\end{figure}

\noindent In this paper, for the sake of explicitly describing Dehn twists, we only defined ribbons between vertices, which are are an instance of a more general definition of ribbons, encoding ``minimal'' group elements that conjugate standard parabolic subgroups: see e.g.~\cite[Definition 1.3]{godelle2007artin}. The vertex ribbon property, and a more general ``ribbon property", is conjecturally satisfied by all Artin groups, and was proven for several classes. We list some of them here, specialised to the cases we shall consider later:
\begin{thm}[{\cite{Pparabolics, godelle2002normalisateurs, godelle2003parabolic,godelle2007artin}}]\label{thm:properties_Godelle}
Let $(A,S)$ be an Artin system of either right-angled, spherical, or large type. Then $(A,S)$ satisfies the vertex ribbon property.
\end{thm}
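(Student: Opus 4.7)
The plan is to treat the spherical and large-type cases separately, since they call for quite different techniques. In both situations the goal is, given $x = gyg^{-1}$ with $x, y \in S$, to decompose $g$ as a product of elementary ribbons as in Definition~\ref{def:ribbon}.

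For the spherical case I would lean on Garside theory. Every $g \in A$ has a unique greedy normal form $g = \Delta_S^{k} g_1 \cdots g_n$, where each $g_i$ is a simple element (i.e.\ a divisor of $\Delta_S$ in the positive monoid). I would argue by induction on $n$. The base case uses the classical fact that conjugation by $\Delta_S$ realises an involutive graph automorphism $\sigma$ of $\Gamma_S$; a case-by-case inspection of the irreducible spherical diagrams $A_n$, $B_n$, $D_n$, $E_n$, $F_4$, $H_n$, $I_2(m)$ shows that $\sigma$ can be written as a composition of elementary ribbons along odd edges of $\Gamma_S$. For a single simple conjugator $g_i$ sending some generator $x_i$ to $x_{i+1}$, one classifies the finitely many ways this can arise inside a simple element (this is where positivity and the lattice structure of divisors of $\Delta_S$ really help) and matches each with an explicit sequence in $\ribb{S}{x_i}{x_{i+1}}$.

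For the large-type case I would use a diagrammatic approach. Given $x = gyg^{-1}$, take a disk van Kampen diagram $D$ with boundary label $g y g^{-1} x^{-1}$. Large-type Artin presentations satisfy strong curvature properties (for instance $C(4)$–$T(4)$ small cancellation, or convexity of standard-generator fixed sets in the Deligne complex), which heavily restrict how $2$-cells in $D$ can meet. The idea is to decompose $D$ into dual \emph{ribbon bands}: a band crossing dihedral $2$-cells of odd label produces an elementary odd-edge ribbon, while a band crossing dihedral $2$-cells of even label produces a Garside-type or commutation-type elementary ribbon. Reading these bands off, in order, along the portion of $\partial D$ labelled by $g$ exhibits $g$ as a product of elementary ribbons in $\ribb{S}{x}{y}$.

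The main obstacle differs between the cases. In the spherical case the delicate part is the case analysis of simple conjugators for the exceptional types $E_n$, $F_4$, $H_n$, where the divisor poset of $\Delta_S$ is large and one must verify that \emph{every} divisor of $\Delta_S$ mapping one standard generator to another admits an explicit ribbon decomposition; this enumeration is essentially the bulk of Godelle's work in \cite{godelle2002normalisateurs}. In the large-type case the hardest step is controlling the global topology of $D$: local curvature alone does not immediately yield a clean band decomposition, and one typically has to induct on subdisks bounded by closed bands and to appeal to results on parabolic subgroup intersections (such as~\cite{blufstein2023parabolic}) to guarantee that every ribbon band eventually reaches the boundary of $D$ and thus contributes to the required factorisation of $g$.
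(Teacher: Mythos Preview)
The paper does not prove this theorem; it is stated with a bare citation to Godelle's papers \cite{godelle2002normalisateurs,godelle2007artin} and used as a black box. There is therefore nothing in the paper to compare your argument against beyond the reference itself.

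That said, your sketch has a genuine gap in the spherical case. You propose to induct on the length $n$ of the greedy normal form $g=\Delta_S^k g_1\cdots g_n$, treating each simple factor $g_i$ as a conjugator between standard generators. But nothing guarantees that the intermediate elements $g_i^{-1}\cdots g_1^{-1}\Delta_S^{-k}\,x\,\Delta_S^{k}g_1\cdots g_i$ are themselves standard generators; a priori they are only conjugates of generators. Establishing that one can refine a conjugating element into a chain whose intermediate steps stay inside $S$ is precisely the content of the theorem, and your induction assumes it rather than proves it. Godelle's actual argument works instead at the level of normalisers and quasi-centralisers of parabolic subgroups, proving a structural statement (roughly, that $\norm{A}{A_X}=A_X\cdot\qz{A}{A_X}$ with the quasi-centraliser generated by ribbon-type elements) from which the vertex case follows; the reduction to simples is not done factor-by-factor in the way you describe.

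For the large-type case, the ``ribbon band'' decomposition of a van Kampen diagram is suggestive but not an established technique, and you yourself flag that controlling the global topology of $D$ is the hard part. Without a precise mechanism forcing bands to reach the boundary (and in the right order), this remains a heuristic rather than a proof. Godelle's approach in \cite{godelle2007artin} is more algebraic, going through a presentation of the normaliser of a standard parabolic rather than through diagrams.
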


\noindent The next proposition shows the relevance of the vertex ribbon property in this paper:

\begin{prop}\label{prop:aa_ribbon_are_twists}
    Let $(A,S)$ be an Artin system. Let $S=\{v\}\sqcup B\sqcup C$, where $m_{bc}=\infty$ for every $b\in B$ and $c\in C$, and let $h\in A_{B\cup\{v\}}\cup A_{C\cup\{v\}}$ centralise~$v$. Let $\delta$ be the Dehn twist of $B$ around $v$ by $h$. 
    If both $(A_{B\cup\{v\}},B\cup \{v\})$ and $(A_{C\cup\{v\}},C\cup \{v\})$ enjoy the vertex ribbon property, then $\delta(S)\sim S$.
\end{prop}

\begin{proof}
As in the proof of Lemma~\ref{lem:dehntwistpreservesgraph}, the Dehn Twist of $B$ by $h$ can be expressed as the Dehn twist of $C$ by $h^{-1}$, followed by the conjugation by $h$. Therefore it is enough to consider the case where $h\in A_{C\cup\{v\}}$. 

By the vertex ribbon property for $(v,v)$ in $C\cup\{v\}$, $h\in\ribb{C\cup\{v\}}{v}{v}$, so for every $i=0,\ldots, n$ let $(a_i,b_i)\in  C\cup\{v\}\times C\cup\{v\}$, $m_i$, $t_i$, $\varepsilon_i$, and $\gamma\subseteq \Gamma_{C\cup \{v\}}$ be as in Remark~\ref{rem:aa_ribbon}. 

Set $h_{-1}=1$ and, for every $i=0,\ldots, n$, let $h_i=t_i^{\varepsilon_i}\ldots t_0^{\varepsilon_0}$. Set $B_i=h_iBh_i^{-1}$, so that $B_i=t_i^{\varepsilon_i} B_{i-1} t_i^{-\varepsilon_i}$, and $S_i=C\cup\{v\}\cup B_i$, so that $S_n=\delta(S)$. We shall now prove by induction on $i$ that $S_i\sim S$, and furthermore that $a_{i+1}$ separates $\Gamma_{B_i}$ from the rest of $\Gamma_{S_i}$. The base case $i=-1$ holds vacuously. Now suppose the conclusion holds for $i-1$. There are four cases to consider, depending on $m_i$.

 If $m_i=1$ then $t_i= a_i$, so conjugating $B_{i-1}$ by $t_i$ (or its inverse) is an elementary twist around $\{a_i\}$. If $m_i=2$ then $t_i= b_i$, so conjugating $B_{i-1}$ by $t_i$ (or its inverse) is an elementary twist around $\{b_i\}$ (notice that $a_i\in \{b_i\}^\bot$ by construction). If $m_i\ge 4$ is even, then $t_i= \Delta_{a_ib_i}$, so conjugating $B_{i-1}$ by $t_i$ (or its inverse) is an elementary twist along the edge $\{a_i,b_i\}$. In each of these three cases $\Gamma_{S_i}\cong \Gamma_{S_{i-1}}$ and $a_{i+1}=a_i$, which still separates $\Gamma_{B_i}$ from the rest of $\Gamma_{S_i}$.

The last case to consider is when $m_i\ge 3$ is odd, so that $t_i= \Delta_{a_ib_i}$. The only difference with the even case is that $\Gamma_{S_i}$ is obtained from $\Gamma_{S_{i-1}}$ by ``sliding'' $\Gamma_{B_{i-1}\cup\{a_i\}}$ along the edge $\{a_i, a_{i+1}\}$ (the situation is similar to that in Figure~\ref{fig:slide}). Since $a_i$ was separating in $\Gamma_{S_{i-1}}$, $a_{i+1}$ now separates $\Gamma_{B_i}$ from the rest of $\Gamma_{S_i}$. This concludes the induction, and in turn the proof that $\delta(S)\sim S$. 
\end{proof}

We now study which procedures preserve the vertex ribbon property. We first observe that, to verify the vertex ribbon property, it is sufficient to understand centralisers of standard generators. We will freely use this fact in the sequel.

\begin{lemma}\label{lem:ss_rib_implies_st_rib}
   An Artin system $(A,S)$ satisfies the vertex ribbon property if and only if for all $s \in S$, $(s,s)$ satisfies the vertex ribbon property in $S$.
\end{lemma}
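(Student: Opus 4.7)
The forward direction is immediate from the definition. For the backward direction, assume $(s,s)$ satisfies the vertex ribbon property for every $s \in S$, and let $s,t \in S$ and $g \in A$ with $s = gtg^{-1}$. I need to show $g \in \ribb{S}{s}{t}$. If $s$ and $t$ are not conjugate in $A$, then no such $g$ exists and there is nothing to prove, so I may assume $s$ and $t$ are conjugate.

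The plan is to produce one explicit $(s,t)$-ribbon and then use the hypothesis to absorb the difference. By Paris's result \cite[Corollary 4.2]{Pparabolics} (already invoked in the proof of Proposition~\ref{prop:expansion Gamma-tree}), since $s$ and $t$ are conjugate standard generators, there is a sequence $s_0 = s, s_1, \ldots, s_k = t$ in $S$ such that each $\{s_i, s_{i+1}\}$ spans an odd dihedral $S$-parabolic. Set
\[
r_0 \coloneq \Delta_{s_0 s_1} \Delta_{s_1 s_2} \cdots \Delta_{s_{k-1} s_k}.
\]
By Definition~\ref{def:ribbon}(1), each $\Delta_{s_i s_{i+1}}$ is an elementary $(s_i, s_{i+1})$-ribbon, so $r_0 \in \ribb{S}{s}{t}$; in particular $r_0 t r_0^{-1} = s$.

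Now consider the element $h \coloneq g r_0^{-1}$. Since both $g$ and $r_0$ conjugate $t$ to $s$, we have $h s h^{-1} = g r_0^{-1} s r_0 g^{-1} = g t g^{-1} = s$, so $h$ centralises $s$. By the hypothesis applied to the pair $(s,s)$, we conclude $h \in \ribb{S}{s}{s}$. Then
\[
g = h \cdot r_0 \in \ribb{S}{s}{s} \cdot \ribb{S}{s}{t} \subseteq \ribb{S}{s}{t},
\]
where the last inclusion is the concatenation property of ribbons noted immediately before the lemma. This completes the proof.

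There is no real obstacle here: the only nontrivial input is the existence of at least one ribbon between conjugate standard generators, which is furnished by Paris's theorem. The hypothesis on $(s,s)$-ribbons then handles the ambiguity up to centraliser elements, which is precisely the ambiguity in choosing a conjugator.
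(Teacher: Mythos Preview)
Your proof is correct and follows essentially the same approach as the paper: both construct one explicit $(s,t)$-ribbon via Paris's odd-path result, then use the $(s,s)$ hypothesis to absorb the remaining centraliser element. The only cosmetic difference is the order in which the ribbon and the centralising element are split off.
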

\begin{proof}
    The forward direction is obvious. For the reverse direction, suppose $s, t \in S$ and $t = gsg^{-1}$. Since $s$ and $t$ are conjugate, it follows from Lemma~\ref{lem:conjugatingStandardGenerators} that there is a path of odd edges between them, and so there is $h \in \ribb{S}{s}{t}$ (which is the product of the Garside elements associated to these edges). We see that $s = hth^{-1} = hgs(hg)^{-1}$, so $hg \in \ribb{S}{s}{s}$ by assumption. Hence $g \in h^{-1}\ribb{S}{s}{s} = \ribb{S}{t}{s}$ as required.
\end{proof}

\begin{prop}\label{prop:combination_ribbons}
    Let $(A,S)$ be an Artin system. Suppose that $A = A_B *_{A_D} A_C$ where $B,C \subseteq S$ and $D=B\cap C$. Suppose further that $(A_B,B)$ and $(A_C,C)$ satisfy the vertex ribbon property. Then $(A,S)$ satisfies the vertex ribbon property.
\end{prop}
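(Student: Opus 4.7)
The plan is to reduce the proposition, via Lemma~\ref{lem:ss_rib_implies_st_rib}, to the statement that for every $s\in S$, every element of $A$ centralising $s$ lies in $\ribb{S}{s}{s}$. In fact, I would prove the slightly more general claim---equivalent to the vertex ribbon property of $(A,S)$---that for every $x,y\in S$ and every $g\in A$ with $gxg^{-1}=y$, we have $g\in\ribb{S}{y}{x}$. The main tool is the Bass-Serre tree $T$ for the splitting $A=A_B*_{A_D}A_C$, with base vertices $v_B,v_C$ stabilised by $A_B,A_C$; for each $s\in S$, van der Lek's theorem gives $V_s\coloneq\mathrm{Fix}(s)\cap\{v_B,v_C\}=\{v_Z:Z\in\{B,C\},\,s\in Z\}\neq\emptyset$. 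Given a triple $(x,y,g)$ as above, I would induct on
\[
k(x,y,g)\coloneq\min\bigl\{d_T(w_y,gw_x)\;:\;w_x\in V_x,\,w_y\in V_y\bigr\}.
\]
For the base case $k=0$, there exist $w_x,w_y$ with $gw_x=w_y$; since $v_B$ and $v_C$ lie in distinct $A$-orbits of the bipartite tree $T$, necessarily $w_x=w_y=v_Z$ for a common $Z\in\{B,C\}$, whence $g\in A_Z$ and $x,y\in Z$. The vertex ribbon property of $(A_Z,Z)$ then gives $g\in\ribb{Z}{y}{x}\subseteq\ribb{S}{y}{x}$, as every elementary ribbon in $Z$ is also an elementary ribbon in $S$.

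For the inductive step $k\ge 1$, pick $w_x,w_y$ realising the minimum; up to swapping $B\leftrightarrow C$ we may assume $w_y=v_B$, so $y\in B$. The element $y$ fixes both $v_B$ and $gw_x$ (the latter because $y=gxg^{-1}$ and $x$ fixes $w_x$), so the geodesic $[v_B,gw_x]$ lies in $\mathrm{Fix}(y)$. Let $w_1$ be its second vertex; then $w_1=hv_C$ for some $h\in A_B$, and since the edge stabiliser $hA_Dh^{-1}$ contains $y$, we obtain $h^{-1}yh\in A_D$. The cyclic $S$-parabolic $\langle h^{-1}yh\rangle$ sits inside the standard $S$-parabolic $A_D$, so by~\cite[Theorem 1.1]{blufstein2023parabolic} there exists $d\in A_D$ with $d^{-1}(h^{-1}yh)d=t$ for some $t\in D$ (abelianisation forces the exponent to be $1$, since $y$ and $t$ lie in the same odd-edge component of $\Gamma_S$). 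Replacing $h$ by $hd$---which leaves $w_1=hv_C$ unchanged, as $d$ fixes $v_C$---the vertex ribbon property of $(A_B,B)$ gives $h\in\ribb{B}{y}{t}\subseteq\ribb{S}{y}{t}$. Setting $g''\coloneq h^{-1}g$, a direct computation shows $g''x(g'')^{-1}=h^{-1}yh=t$; since $v_C\in V_t$,
\[
k(x,t,g'')\le d_T(v_C,g''w_x)=d_T(hv_C,gw_x)=d_T(w_1,gw_x)=k-1,
\]
using that $w_1$ lies on the geodesic from $v_B$ to $gw_x$. The inductive hypothesis yields $g''\in\ribb{S}{t}{x}$, and composition gives $g=hg''\in\ribb{S}{y}{t}\cdot\ribb{S}{t}{x}\subseteq\ribb{S}{y}{x}$.

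The main obstacle---and the key technical input---is promoting $h^{-1}yh\in A_D$ to a standard generator of $D$ in the inductive step: without this, the vertex ribbon property of $(A_B,B)$ does not apply directly to the conjugator $h$, since both source and target of the conjugation must be standard generators. Fortunately, \cite[Theorem 1.1]{blufstein2023parabolic} applied to the chain $\langle h^{-1}yh\rangle\le A_D$ of $S$-parabolic subgroups supplies an element of $A_D$ that conjugates $\langle h^{-1}yh\rangle$ to a standard cyclic parabolic of $A_D$, which is exactly what is needed to close the induction.
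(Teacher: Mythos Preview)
Your argument is correct and rests on the same ingredients as the paper's proof: the Bass--Serre tree for the visual splitting, the fact that a generator fixing an edge can be conjugated (via \cite[Theorem~1.1]{blufstein2023parabolic}) into a standard generator of $D$, and the vertex ribbon property of the factors. The organisation differs slightly: the paper first reduces to the $(s,s)$ case via Lemma~\ref{lem:ss_rib_implies_st_rib} and then splits into three cases according to the shape of $\mathrm{Fix}(s)$ (only $v_B$; contains the base edge; contains some edge), with an inductive claim embedded in the second case, whereas you run a single uniform induction on the distance $k(x,y,g)$ between base vertices and their $g$-translates. Your packaging is arguably cleaner, avoiding the case analysis and the auxiliary reduction of Case~2b to Case~2a; the paper's version makes the role of the fixed-point set of $s$ more explicit. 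One minor remark: your parenthetical about ``odd-edge component'' is unnecessary---the sign $d^{-1}(h^{-1}yh)d=t$ rather than $t^{-1}$ follows directly from the homomorphism $A\to\mathbb{Z}$ sending every generator to $1$, with no reference to odd components needed.
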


\begin{proof} We consider the Bass--Serre tree $T$ for the splitting in the statement, following the construction of Example~\ref{ex:amalgProd}. We write $v_B$ and $v_C$ for the vertices corresponding to $A_B$ and $A_C$ respectively, and $e$ for the edge between those vertices, which is stabilised by $A_D$. 
    
By Lemma~\ref{lem:ss_rib_implies_st_rib}, it is enough to prove that, for every $s\in S$ and every $w\in A$ which commutes with $s$, $w\in \ribb{S}{s}{s}$. Without loss of generality assume that $s\in B$, so that $sv_B=v_B$. We write $T^s$ for the subtree of $T$ fixed by $s$, and notice that if $x \in T^s$ then $wx \in T^s$ as well. There are several cases to consider, according to the shape of $T^s$.

    \textbf{Case 1}. We suppose first that $s$ only fixes $v_B$. Then $wv_B = v_B$ so $w \in A_B$. Since $(A_B,B)$ satisfies the vertex ribbon property by hypothesis, we have that $w \in \ribb{B}{s}{s} \subseteq \ribb{S}{s}{s}$.
\par\medskip
    \textbf{Case 2a}. Next, suppose that $s$ fixes $e$, i.e. $s\in D$. We characterise edges in $T^s$:
    \begin{claim}\label{claim:rib}
        Every edge $ge$ in $T^s$ can be written as $fe$, where $f \in \ribb{S}{s}{t}$ for some $t \in D$.
    \end{claim}
    \begin{claimproof}[Proof of Claim \ref{claim:rib}]
        We proceed by induction on the distance between the midpoints of $ge$ and $e$. The base case is clear by taking $t = s$ and $f = 1$.

        For the inductive step, fix $fe \in T^s$ such that $f \in \ribb{S}{s}{t}$ for some $t\in D$. Suppose without loss of generality that $fv_C$ is the endpoint furthest from $e$, and consider an edge $ge$ sharing this endpoint in $T^s$, which we may write as $fhe$ for some $h \in A_C$. Notice that, since $sfhe = fhe$, we have that $h^{-1}th = h^{-1}f^{-1}sfh \in A_D$. By \cite[Theorem 1.1]{blufstein2023parabolic}, $h^{-1}\langle t \rangle h$ is a parabolic subgroup inside of $A_D$, so up to postmultiplying $h$ by an element of $A_D$ (which does not change the edge $fhe$), we may assume $h^{-1}th = r$ for some $r \in D$. By the assumption that $(A_C,C)$ satisfies the vertex ribbon property, we see that $h \in \ribb{C}{t}{r} \subseteq \ribb{S}{t}{r}$, and so $fh \in \ribb{S}{s}{r}$. This completes the proof of the claim, because $ge=fhe$.
    \end{claimproof}

    Now, take $w \in A_S$ such that $wsw^{-1} = s$, and therefore $we \in T^s$. By Claim~\ref{claim:rib}, we may write $we = fe$ where $f \in \ribb{S}{s}{t}$ for some $t \in D$. In turn $w = fh$, where $h \in A_D\le A_B$ is such that $hsh^{-1} = t$, and as such $h \in \ribb{B}{t}{s}\subseteq \ribb{S}{t}{s}$ by assumption that $(A_B,B)$ satisfies the vertex ribbon property. It follows that $w \in \ribb{S}{s}{s}$ as required.
\par\medskip

    \textbf{Case 2b}. Finally, suppose that $s$ fixes an edge, and we claim that we can reduce to Case 2a. Indeed, since $s$ fixes $v_B$, it must also fix some edge of the form $be$, where $b\in A_B$. Then $b\langle s\rangle b^{-1}$ is a parabolic subgroup of $A_D$, so by \cite{blufstein2023parabolic} there exists $d\in A_D$ and $s'\in D$ such that $b\langle s\rangle b^{-1}=d^{-1}\langle s'\rangle d$. By looking at the map $A_S\to \Z$ sending every generator to $1$ we see that $bsb^{-1}=d^{-1}s'd$. Now, since $db\in A_B$ conjugates $s\in B$ to $s'\in D\subseteq B$, the vertex ribbon property for $(A_B,B)$ yields that $db\in \ribb{B}{s'}{s}\subseteq \ribb{S}{s'}{s}$. Furthermore, since $wsw^{-1}=s$, the element $w'=db w(db)^{-1}$ commutes with $s'$, and the latter fixes $e$ as it belongs to $A_D$. Then $w'\in \ribb{S}{s'}{s'}$ by Case 2a, and therefore $w=(db)^{-1} w'db\in \ribb{S}{s}{s}$, as required.
\end{proof}

\begin{cor}\label{cor:ribbon_for_cliques_ribbon_for_all}
    Let $(A,S)$ be an Artin system. Suppose that, for any $Y \subseteq S$ spanning a clique in $\Gamma_S$, $(A_Y,Y)$ satisfies the vertex ribbon property. Then $(A,S)$ satisfies the vertex ribbon property. 
\end{cor}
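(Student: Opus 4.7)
The plan is to prove the corollary by induction on $|S|$. The base case is when $\Gamma_S$ is a clique, in which case the conclusion is immediate from the hypothesis (taking $Y = S$). For the inductive step, we assume the result holds for all Artin systems with strictly fewer generators (and whose induced cliques satisfy the vertex ribbon property).

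Suppose now that $\Gamma_S$ is not a clique, so there exist $a,b \in S$ with $m_{ab} = \infty$. Set $B = S \setminus \{a\}$ and $C = S \setminus \{b\}$, so that $B \cup C = S$ and $D := B \cap C = S \setminus \{a,b\}$. Since there is no relation in the presentation of $A$ involving both $a$ and $b$, the standard parabolic subgroups decompose as a visual amalgamated product $A = A_B *_{A_D} A_C$ (with $A_D = A_B \cap A_C$ by van der Lek's theorem). Since $\Gamma_B$ and $\Gamma_C$ are induced subgraphs of $\Gamma_S$, every subset $Y \subseteq B$ (respectively $Y \subseteq C$) spanning a clique in $\Gamma_B$ (respectively $\Gamma_C$) also spans a clique in $\Gamma_S$, and so $(A_Y,Y)$ satisfies the vertex ribbon property by the hypothesis of the corollary. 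Hence the inductive hypothesis applies to both $(A_B, B)$ and $(A_C, C)$, which therefore satisfy the vertex ribbon property.

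With both factors of the visual splitting satisfying the vertex ribbon property, Proposition~\ref{prop:combination_ribbons} applies directly to conclude that $(A,S)$ itself satisfies the vertex ribbon property, completing the induction.

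The entire argument is essentially a bookkeeping exercise once Proposition~\ref{prop:combination_ribbons} is available, so I do not expect any significant obstacle. The only subtlety worth flagging is ensuring that the clique hypothesis is inherited by induced subgraphs, but this is immediate since induced subgraphs of induced subgraphs are induced subgraphs of the original.
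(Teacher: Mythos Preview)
Your proof is correct and follows essentially the same approach as the paper: induction on $|S|$, picking two non-adjacent vertices when $\Gamma_S$ is not a clique, forming the visual splitting over the complement, noting that cliques in the factors are cliques in $\Gamma_S$, and applying Proposition~\ref{prop:combination_ribbons}. The paper's version is terser but the argument is the same.
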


\begin{proof}
    We proceed by induction on the cardinality of $S$, the base case $|S|=1$ being trivial. Now suppose the conclusion holds for every Artin system $(A',S')$ such that $|S'|<|S|$. If $\Gamma_S$ is a clique, there is nothing to prove. Otherwise let $u,v\in S$ be non-adjacent in $\Gamma_S$, and set $B=S-\{u\}$, $C=S-\{v\}$, and $D=B\cap C$. Every clique in $\Gamma_B$ or $\Gamma_C$ is also a clique in $\Gamma_S$; so by induction $(A_B,B)$ and $(A_C,C)$ satisfy the vertex ribbon property, and therefore so does $(A,S)$ by Proposition~\ref{prop:combination_ribbons}.
\end{proof}

\noindent The vertex ribbon property is also preserved under elementary twists, provided that it holds for cliques:
\begin{lemma}\label{lem:ribbons_preserved_under_twist}
    Let $(A,S)$ be an Artin system, and let $S'$ be obtained from $S$ by an elementary twist. Suppose further that, for any $Y \subseteq S$ spanning a clique in~$\Gamma_S$, $(A_Y,Y)$ satisfies the vertex ribbon property. Then for any $Y' \subseteq S'$ spanning a clique in $\Gamma_{S'}$, $(A_{Y'},Y')$ satisfies the vertex ribbon property. In particular $(A,S')$ satisfies the vertex ribbon property.
\end{lemma}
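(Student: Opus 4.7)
The ``in particular'' clause follows from Corollary~\ref{cor:ribbon_for_cliques_ribbon_for_all} applied to $(A,S')$, so I would focus on the first assertion: every $Y'\subseteq S'$ spanning a clique of $\Gamma_{S'}$ satisfies the vertex ribbon property.

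Write $\tau$ for the elementary twist of $B$ around $J$, so that $S'=\tau(B)\cup C\cup J\cup J^\bot$ with $\tau(B)=\Delta_J B\Delta_J^{-1}\subseteq A_{B\cup J\cup J^\bot}$. The structural fact I would first establish is that the visual splitting
\[A=A_{B\cup J\cup J^\bot}*_{A_{J\cup J^\bot}}A_{C\cup J\cup J^\bot}\]
coming from $(A,S)$ remains a visual splitting of $(A,S')$, with the first factor now generated by $\tau(B)\cup J\cup J^\bot$ (note that $A_{\tau(B)\cup J\cup J^\bot}=A_{B\cup J\cup J^\bot}$ as subgroups of $A$, since $\Delta_J\in A_J$). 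The only non-formal step is to rule out edges of $\Gamma_{S'}$ between $\tau(B)$ and $C$. I would do this by a standard ping-pong argument on the Bass--Serre tree $T$ of the splitting: for $b\in B$ and $c\in C$, van der Lek's theorem gives that neither $\tau(b)$ nor $c$ lies in $A_{J\cup J^\bot}$, so they fix distinct vertices of $T$ with disjoint stable fixed-point sets separated by the edge stabilised by $A_{J\cup J^\bot}$. Hence $\langle\tau(b),c\rangle\cong F_2$, which precludes any braid relation (no dihedral Artin group is free of rank~$2$).

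Given this splitting, every clique $Y'\subseteq S'$ of $\Gamma_{S'}$ lies entirely in $C\cup J\cup J^\bot$ or entirely in $\tau(B)\cup J\cup J^\bot$. In the first case, $Y'\subseteq S$ and the induced labelled subgraph of $\Gamma_{S'}$ on $Y'$ coincides with the one in $\Gamma_S$ (since $\tau$ fixes these generators pointwise and braid relations depend only on group elements), so the hypothesis applies directly. In the second case, I would observe that
\[\tau(B)\cup J\cup J^\bot=\Delta_J(B\cup J\cup J^\bot)\Delta_J^{-1}\]
as a subset of $A$, using that $\Delta_J$ normalises $J$ via the Garside permutation and commutes with $J^\bot$. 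Consequently, conjugation by $\Delta_J$ induces an isomorphism of Artin systems from $(A_{B\cup J\cup J^\bot},B\cup J\cup J^\bot)$ to $(A_{B\cup J\cup J^\bot},\tau(B)\cup J\cup J^\bot)$, matching $Y'$ to a clique $Y\subseteq B\cup J\cup J^\bot\subseteq S$ of $\Gamma_S$. Since the vertex ribbon property is manifestly invariant under isomorphism of Artin systems, the hypothesis transfers from $(A_Y,Y)$ to $(A_{Y'},Y')$.

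The hardest part is really the visual splitting claim---specifically, the exclusion of new edges across the separator $J\cup J^\bot$; everything else is essentially bookkeeping once that is secured.
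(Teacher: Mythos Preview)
Your argument is correct and follows essentially the same approach as the paper: both proofs observe that every clique of $\Gamma_{S'}$ lies in one of the two factors $\tau(B)\cup J\cup J^\bot$ or $C\cup J\cup J^\bot$, transfer the hypothesis via the Artin-system isomorphism given by conjugation by $\Delta_J$, and then invoke Corollary~\ref{cor:ribbon_for_cliques_ribbon_for_all}. The only difference is emphasis: you treat the exclusion of edges between $\tau(B)$ and $C$ as the crux and prove it by ping-pong on the Bass--Serre tree, whereas the paper regards this as already part of the package when defining elementary twists (it is built into \cite[Theorem~4.5]{BMMNtwistconjecture}, cited in Definition~\ref{defn:twist_eq}), so simply asserts the non-adjacency and cites \cite[Definition~4.4]{BMMNtwistconjecture} for the isomorphism $(A_B,B)\cong(A_{B'},B')$.
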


\begin{proof}
    By definition of an elementary twist, there exist $J,T,R\subseteq S$ such that, if we set $D=J\cup J^\bot$, $B=R\cup D$, $C=T\cup D$, then $S=B\cup C$ while $S'=B'\cup C$, where $B'=\Delta_J R\Delta_J^{-1} \cup D$. Notice that the conjugation by $\Delta_J$ preserves both $J$ and $J^{\bot}$, so $B'=\Delta_J B\Delta_J^{-1}$. Hence the conjugation by $\Delta_J$ defines an isomorphism $(A_B,B)\to (A_{B'},B')$. It follows that, for every $Y'\subseteq S'$ spanning a clique in $\Gamma_{B'}$ (resp. $\Gamma_C$), $(A_{Y'}, Y')$ has the vertex ribbon property by assumption. Furthermore any clique of $\Gamma_{S'}$ is contained in either $\Gamma_{B'}$ or $\Gamma_C$, as no $b'\in B'-D$ is adjacent to any $c\in C-D$ in $\Gamma_{S'}$. Finally, $(A,S')$ satisfies the vertex ribbon property by Corollary~\ref{cor:ribbon_for_cliques_ribbon_for_all}.
\end{proof}

\noindent We are now ready to prove that, if we assume the vertex ribbon property for cliques, then we can promote the generalised twist equivalence from Theorem~\ref{thm:refrigidreduction} to a genuine twist equivalence.

\begin{thm}\label{thm:strong+ribbon=strong}
    Let $(A,S)$ be an Artin system, with $A$ one-ended. Suppose that the following hold:
    \begin{itemize}
        \item For any $X\subseteq S$ spanning a 1--chunk in $\Gamma_S$, $(A_X,X)$ satisfies the strong twist conjecture;
        \item For any $Y\subseteq S$ spanning a clique in $\Gamma_S$, $(A_Y,Y)$ satisfies the vertex ribbon property.
    \end{itemize} 
    Then $(A,S)$ satisfies the strong twist conjecture.
\end{thm}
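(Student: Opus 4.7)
The strategy is to bootstrap from Theorem~\ref{thm:refrigidreduction} using the ribbon machinery developed above. Given an Artin generating set $U \subseteq A$ with $R_S = R_U$, Theorem~\ref{thm:refrigidreduction} produces a finite sequence $S = S_0, S_1, \ldots, S_n = U$ in which each $S_{i+1}$ is obtained from $S_i$ by an elementary twist, a Dehn twist, or a conjugation. I would argue by induction on $i$ that $S_i \sim S$ (strict twist equivalence, without Dehn twists), which in particular gives $U \sim S$ and concludes the proof.

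The only non-trivial step in the induction is when $\sigma_{i+1}$ is a Dehn twist about some vertex $a \in S_i$. Here the plan is to invoke Proposition~\ref{prop:aa_ribbon_are_twists} to conclude that $S_{i+1} \sim S_i$; this requires $(a,a)$ to satisfy the vertex ribbon property in $(A,S_i)$. The hypothesis on cliques, combined with Corollary~\ref{cor:ribbon_for_cliques_ribbon_for_all}, yields the full vertex ribbon property for $(A,S)$. By the inductive hypothesis, $S_i$ is obtained from $S$ by a sequence of elementary twists and conjugations; Lemma~\ref{lem:ribbons_preserved_under_twist} (invoked once per elementary twist, together with the trivial invariance of ribbon data under conjugation) then guarantees that the ribbon property for clique parabolics is preserved at every intermediate stage. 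A further application of Corollary~\ref{cor:ribbon_for_cliques_ribbon_for_all} gives the full vertex ribbon property for $(A,S_i)$, and in particular for the pair $(a,a)$. Proposition~\ref{prop:aa_ribbon_are_twists} now yields $S_{i+1} \sim S_i \sim S$, closing the induction.

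The bulk of the theorem is thus absorbed into the preceding technology: Theorem~\ref{thm:refrigidreduction} supplies the reduction to a sequence of twists that allows Dehn twists, while Proposition~\ref{prop:aa_ribbon_are_twists} and Lemma~\ref{lem:ribbons_preserved_under_twist} together allow us to rewrite each Dehn twist as a composition of elementary twists. The delicate point, and the one that motivates phrasing the hypothesis in terms of cliques rather than the whole graph, is that the vertex ribbon property must be propagated along the entire sequence of intermediate generating sets; this propagation is exactly what Lemma~\ref{lem:ribbons_preserved_under_twist} is designed to accomplish, and verifying it is the main obstacle to overcome before the induction goes through cleanly.
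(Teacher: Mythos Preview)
Your proposal is correct and follows essentially the same approach as the paper's own proof: both bootstrap from Theorem~\ref{thm:refrigidreduction}, then inductively replace each Dehn twist in the resulting sequence by elementary twists via Proposition~\ref{prop:aa_ribbon_are_twists}, using Lemma~\ref{lem:ribbons_preserved_under_twist} and Corollary~\ref{cor:ribbon_for_cliques_ribbon_for_all} to ensure the required ribbon property is available at each stage. The only organisational difference is that the paper carries the clique ribbon property along as part of a simultaneous induction, whereas you re-derive it at each step from $S$ via the already-established twist-equivalence $S_i\sim S$; both are valid.
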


\begin{proof}
Theorem~\ref{thm:refrigidreduction} produces a sequence $S=S_0,\ldots,S_k=U$ of Artin generating sets such that, for every $i$, $S_{i+1}$ is obtained from $S_{i}$ by a conjugation, an elementary twist, or a Dehn Twist (with respect to $S_{i}$).

It is now enough to inductively prove that every $S_i$ is twist equivalent to $S$; we shall also prove that, for every $Y\subseteq S_i$ spanning a clique in $\Gamma_{S_i}$, $(A_Y,Y)$ satisfies the vertex ribbon property. There is nothing to prove for the base case $S_0=S$. Now assume that $S_i$ satisfies the inductive hypothesis. If $S_{i+1}$ is obtained from $S_i$ via a conjugation or an elementary twist, then $S_{i+1}\sim S$. Otherwise, there exist a separating vertex $v\in S_i$ and a decomposition $S_i=B\sqcup\{v\}\sqcup C$, with $m_{bc}=\infty$ whenever $b\in B$ and $c\in C$, such that $S_{i+1}$ is obtained from $S_i$ via a Dehn Twist of $B$ around $v$. Corollary~\ref{cor:ribbon_for_cliques_ribbon_for_all} and the inductive hypothesis then yield that $(A_{B\cup \{v\}},{B\cup \{v\}})$ and $(A_{C\cup \{v\}},{C\cup \{v\}})$ enjoy the vertex ribbon property, and therefore $S_{i+1}\sim S_{i}$ by Proposition~\ref{prop:aa_ribbon_are_twists}. Then Lemma~\ref{lem:ribbons_preserved_under_twist} implies that $(A_{Y'},Y')$ has the vertex ribbon property whenever $Y'\subseteq S_{i+1}$ spans a clique in $\Gamma_{S_{i+1}}$. This concludes the inductive step, and the proof of Theorem~\ref{thm:strong+ribbon=strong}.
\end{proof}

\section{New examples of the strong twist conjecture}
Here we gather several results in the literature and use them to produce several examples of Artin groups satisfying the strong twist conjecture.
\subsection{The right-angled case}

\begin{lemma}\label{lem:strong_RAAG}
    Let $(A,S)$ be a right-angled Artin system (i.e. for all $a,b\in S$, $m_{ab}\in \{2, \infty\}$), with $A$ one-ended. Then $(A,S)$ satisfies the strong twist conjecture.
\end{lemma}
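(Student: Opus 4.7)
The plan is to combine Droms' isomorphism theorem with Laurence's description of the automorphism group of a RAAG, reducing the problem to realising a certain pure symmetric automorphism as a sequence of elementary twists.

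By Droms' theorem, $\Gamma_S\cong\Gamma_U$ as labelled graphs, so we identify both with a common graph $\Gamma$ on vertex set $V$ and write $S=\{s_v\}_{v\in V}$, $U=\{u_v\}_{v\in V}$. The abelianisation $A\to\Z^V$ distinguishes the conjugacy classes of distinct standard generators, so $R_S=R_U$ yields a unique bijection $\sigma\colon V\to V$ with $u_v$ conjugate to $s_{\sigma(v)}$ for each $v$. Since the centraliser of a standard generator $s$ in a RAAG is the parabolic $\langle\{s\}\cup\{s\}^\bot\rangle$ generated by its closed neighbourhood, commutation between conjugates of distinct generators mirrors commutation of the generators themselves; hence $\sigma$ extends to a labelled graph automorphism of $\Gamma$. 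Let $\tilde\sigma\in\aut{A}$ be the induced automorphism and $\phi\in\aut{A}$ the map defined by $\phi(s_v)=u_v$. Then $\psi\coloneq\tilde\sigma^{-1}\circ\phi$ is a \emph{pure symmetric automorphism}: each $\psi(s_v)$ is a conjugate of $s_v$. Graph automorphisms send elementary twists to elementary twists (as they preserve labelled subgraphs and Garside elements), so it suffices to show $\psi(S)\sim S$; then $U=\tilde\sigma(\psi(S))\sim\tilde\sigma(S)=S$.

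By Laurence's theorem, the group of pure symmetric automorphisms of a RAAG is generated by inner automorphisms and partial conjugations by standard generators. Each non-trivial partial conjugation by $g\in S$, along a union $C$ of connected components of the subgraph obtained from $\Gamma$ by removing $\{g\}\cup\{g\}^\bot$, is precisely an elementary twist around the spherical indecomposable parabolic $\{g\}$ (whose Garside element is $g$ itself). Writing $\psi=\tau_1\circ\cdots\circ\tau_k$ as such a product, with inner factors absorbed into global conjugations, I define a sequence of Artin generating sets $T_0=S,T_1,\dots,T_k$ as follows: at step $i$, obtain $T_i$ from $T_{i-1}$ by the elementary twist \emph{transported} along the bijection $\beta_{i-1}\colon s_v\mapsto(\tau_1\circ\cdots\circ\tau_{i-1})(s_v)$, namely conjugating the elements of $T_{i-1}$ corresponding to $C_i$ by the element corresponding to $g_i$. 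A direct multiplicativity computation gives
\[
T_i=\beta_{i-1}(\tau_i(S))=(\tau_1\circ\cdots\circ\tau_i)(S),
\]
so $T_k=\psi(S)$ as required.

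The main obstacle is verifying that each transported step is indeed a valid elementary twist of $T_{i-1}$. This follows because every $\tau_j$ is an automorphism of $A$ preserving the labelled defining graph, so $\beta_{i-1}$ induces a labelled graph isomorphism $\Gamma\to\Gamma_{T_{i-1}}$; consequently, $\beta_{i-1}(\{g_i\}\cup\{g_i\}^\bot)$ is the closed neighbourhood of $\beta_{i-1}(g_i)$ in $\Gamma_{T_{i-1}}$, and $\beta_{i-1}(C_i)$ is a corresponding union of connected components of its complement. Thus the transported move satisfies the hypotheses of Definition~\ref{defn:twist_eq}, and the sequence $S=T_0,T_1,\dots,T_k=\psi(S)$ witnesses $\psi(S)\sim S$, completing the proof.
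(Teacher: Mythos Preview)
Your proof is correct and takes essentially the same approach as the paper: reduce to a pure symmetric automorphism (the paper phrases this as lying in $\ker(\mathrm{Ab}_*)$, which is the same subgroup), then invoke Laurence's result that this group is generated by partial conjugations, which are precisely elementary twists around single-vertex parabolics. One minor gap: before invoking Droms you need to know that $(A,U)$ is itself right-angled, which requires Baudisch's characterisation (as the paper cites); otherwise your argument is complete, and your explicit transport of twists through the composition $\beta_{i-1}$ makes rigorous a step the paper leaves implicit.
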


\begin{proof}
Let $U$ be an Artin generating set for $A$ such that $R_S=R_U$. By work of Baudisch \cite{baudisch1981raagcharacterise} $U$ must be right-angled as well; in turn, a theorem of Droms \cite{droms1987isomorphisms} implies that two right-angled Artin systems on the same group are isomorphic. Hence let $\phi\colon A\to A$ be an automorphism such that $\phi(S)=U$, i.e. an isomorphism of Artin systems $(A,S)\to(A,U)$.

Now consider the abelianisation map $\mathrm{Ab}: A \rightarrow \mathbb{Z}^{|S|}$ mapping each $s\in S$ to an element of a basis. The kernel of $\mathrm{Ab}$, which is the commutator subgroup, is characteristic, so $\mathrm{Ab}$ induces a map $\mathrm{Ab}_*: \aut{A} \rightarrow \operatorname{GL}(|S|, \mathbb{Z})$ defined by mapping an automorphism $\psi\in \aut{A}$ to the map sending each coset $\ker(\mathrm{Ab}) x$ to $\ker(\mathrm{Ab}) \psi(x)$ (the observation that characteristic quotients induce maps between automorphism groups dates back to Baumslag \cite{Baumslag}). 

Since $\phi$ preserves $R_S$, it maps each $s\in S$ to a conjugate of some $s'\in S$. Hence $\mathrm{Ab}_*(\phi)$ is a permutation of the basis of $\mathbb{Z}^{|S|}$, and in turn this means that $\phi\in \ker(\mathrm{Ab}_*) \sigma$, where $\sigma\in\aut{A}$ is induced by a graph automorphism of $\Gamma_S$. It is now enough to notice that $\sigma$ fixes $S$ setwise, and, by \cite[Theorem 2.2]{laurence1995generating}, $\ker(\mathrm{Ab}_*)$ is generated by the partial conjugations with respect to $S$, which are exactly elementary twists in the sense of Definition \ref{defn:twist_eq}. This proves that $U=\phi(S)\sim S$. 
\end{proof}

\subsection{The large-type case}
We now prove the strong twist conjecture for several subclasses of large-type Artin systems, using results from~\cite{crisp2005automorphisms,BMV,Vaskou_LTFOI}.

\begin{lemma}\label{lem:strong_large}
Let $(A,S)$ be an Artin system of large type (i.e. for every $a,b\in S$, $m_{ab}\ge3$). Suppose that $\Gamma_S$ is connected and has no separating vertex and edges. Assume further that $(A,S)$ is either:
\begin{enumerate}
\item\label{item:dihedral} a dihedral Artin group;
\item\label{item:triangle-free} triangle-free (i.e. for every $a,b,c\in S$, $\max\{m_{ab},m_{bc},m_{ac}\}=\infty$);
\item\label{item:XXXL} of type XXXL (i.e. for every $a,b\in S$, $m_{ab}\ge6$);
\item\label{item:foi} free-of-infinity (i.e. for every $a,b\in S$, $m_{ab}<\infty$).
\end{enumerate}
Then $(A,S)$ satisfies the strong twist conjecture.
\end{lemma}

\begin{proof} 
Let $U$ be an Artin generating set for $A$ such that $R_S=R_U$, and remember that we want to show that $S\sim U$. By \cite[Corollary B]{MV_characterising_LT}, $\Gamma_S\sim\Gamma_U$; moreover, since spherical parabolic subgroups of large type Artin groups are supported on either vertices or edges, and since we are supposing that $\Gamma_S$ has no separating vertices nor edges, there is an isomorphism of labelled graphs $\Gamma_S\cong \Gamma_U$. As a consequence, there exists an automorphism $\phi\in \aut{A}$ such that $U=\phi(S)$. More precisely, $\phi$ is obtained as the composition of three isomorphisms
$$A\xrightarrow[]{f_S} A_{\Gamma_{S}}\xrightarrow[]{g}A_{\Gamma_{U}}\xrightarrow[]{f_{U}^{-1}}A,$$
where $f_S$ (resp. $f_{U}$) identifies $S$ with the vertices of $\Gamma_S$ (resp. $U$ with the vertices of $\Gamma_{U}$) and $g$ is induced by a graph isomorphism $\Gamma_S\cong \Gamma_{U}$.

\medskip 
We first address \eqref{item:dihedral}. Let $S=\{a,b\}$ and let $m=m_{ab}$. If $m$ is odd, then \cite[Theorem C]{Autdihedral} gives that $\out{A}\cong \Z/2\Z$ is generated by the global inversion, and therefore $\phi$ must be a conjugation since $R_S=R_{\phi(S)}$. 
    
Now assume that $m$ is even. Up to composing $\phi$ with a conjugation and the graph automorphism swapping $a$ and $b$, we can assume that $\phi(a)=a$. Then, by e.g. inspecting the proof of \cite[Lemma 2.9]{Jones_VF}, one gets that $\phi$ is either trivial or $\phi(b)=a^{-1}b^{-1}a^{-1}$. The latter cannot happen since $a^{-1}b^{-1}a^{-1}$ and $b$ have different images in the abelianisation, contradicting that $R_S = R_{\phi(S)}$.

\medskip We henceforth assume that $|S|\ge 3$, and we shall prove Cases~\eqref{item:triangle-free}-\eqref{item:XXXL}-\eqref{item:foi} at the same time. By~\cite[Theorem 1 and Theorem 2]{crisp2005automorphisms} for the triangle-free case, \cite[Theorem 9.6]{BMV} for the XXXL case, and \cite[Theorem A]{Vaskou_LTFOI} for the free-of-infinity case, $\aut{A}$ is generated by the following elements:
    \begin{enumerate}[label=(\roman*{})]
        \item\label{item_type1} inner automorphisms;
        \item\label{item_type2} graph automorphisms of $\Gamma_S$, which descend to automorphisms of $A\cong A_{\Gamma_S}$ and preserve $S$ setwise;
        \item the global inversion $\iota$ mapping every generator $s\in S$ to $s^{-1}$. 
    \end{enumerate}
    Note that here we are using that $\Gamma$ has no separating vertices, so in particular every vertex has valence at least two. This means we do not have to worry about the so called leaf-inversions, another class of automorphisms defined in e.g. \cite[page 1383]{crisp2005automorphisms}. We are also using the absence of separating edges to forbid the arising of \emph{edge twist isomorphisms}, again in the sense of e.g. \cite[1383]{crisp2005automorphisms}. 

    Since graph automorphisms of $\Gamma_S$ commute with the global inversion of $S$, and since inner automorphisms form a normal subgroup, we can write $\phi$ as $c_g\iota^\varepsilon \sigma$, where $\varepsilon\in \{0,1\}$, $c_g$ is the conjugation by some $g\in A$, and $\sigma$ is a graph automorphism of $\Gamma_S$. Thus $U=\phi(S)=g(\iota^\epsilon\circ\sigma)(S)g^{-1} =g\iota^\varepsilon(S)g^{-1}$, where we use that $\sigma$ permutes~$S$. If $\varepsilon=0$ then $U$ and $S$ are conjugate, hence twist equivalent. On the other hand, we cannot have that $\varepsilon =1$, as otherwise $U$ and $S$ would have different reflection sets because no generator of $S$ can be conjugate to the inverse of a generator of~$S$.
    \end{proof}

\subsection{The spherical-type case}
We finally turn to Artin systems of spherical type. Restricting to indecomposable systems (where there is no visual decomposition as a direct product) there are four infinite families. One family is the dihedral Artin groups, which we have already considered; in Figure~\ref{fig:spherical_artin} we list the Coxeter graphs of the Artin groups of type $A_n$, $B_n$ and $D_n$, which form the other three families. We shall now prove that spherical-type Artin system of type $A_n$, $B_n$ and $D_n$ also satisfy the strong twist conjecture. The only exception is $D_5$, whose outer automorphism group is not known.

\begin{figure}[htp]
    \centering
    \includegraphics[width=\linewidth, alt={Coxeter graphs of the aforementioned families of indecomposable spherical type Artin groups.}]{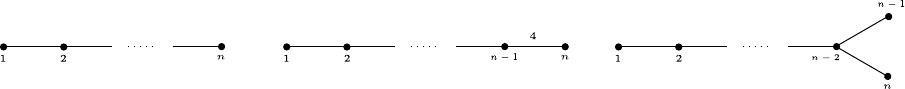}
    \caption{From left to right, the Coxeter graph of the spherical-type Artin group of type $A_n$, $B_n$ and $D_n$. The notation is different from the one we have been using throughout: here non-adjacent vertices in the Coxeter graph correspond to commuting generators, while unlabelled edges correspond with braid relations of length~$3$.}
    \label{fig:spherical_artin}
\end{figure}

\begin{lemma}\label{lem:strong_spherical}
    Let $(A,S)$ be a spherical-type Artin system of type $A_n$, with $n \geq 2$; $B_n$, with $n \geq 2$; or $D_n$, with $n \geq 4$ and $n \neq 5$. Then $(A,S)$ satisfies the strong twist conjecture.
\end{lemma}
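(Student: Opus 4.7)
The plan is to combine the solution of the isomorphism problem for spherical Artin systems with the known descriptions of their automorphism groups, reducing the strong twist conjecture in each of these cases to an easy computation about reflections.

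First, given an Artin generating set $U\subseteq A$ with $R_S=R_U$, I would produce an automorphism $\phi\in\aut{A}$ such that $\phi(S)=U$ and $\phi$ preserves $R_S$ setwise. This uses Paris's solution to the isomorphism problem for spherical Artin systems \cite{Paris_isoprob_spherical}: the isomorphisms $A_{\Gamma_S}\cong A\cong A_{\Gamma_U}$ force $\Gamma_S\cong\Gamma_U$ as labelled graphs, so after composing with a suitable graph isomorphism we may realise $U$ as $\phi(S)$ for some $\phi\in\aut{A}$, which by construction permutes $R_S$.

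The remainder of the argument goes type by type, invoking explicit descriptions of $\aut{A}$ from the literature: Dyer--Grossman for type $A_n$, together with analogous results (due to Charney--Crisp and others) for $B_n$ and $D_n$, with $D_5$ excluded precisely because its outer automorphism group is not currently understood. In each case, $\aut{A}$ is generated by inner automorphisms, (possibly trivial) graph automorphisms of $\Gamma_S$, and the global inversion $\iota\colon s\mapsto s^{-1}$. The inversion is ruled out by the reflection-preservation hypothesis: the abelianisation $A\to\Z$ sending every generator to $1$ maps each element of $R_S$ to $1$ and each of its inverses to $-1$, so $s^{-1}\notin R_S$ for any $s\in S$. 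Hence $\phi$ is a composition of an inner automorphism with a graph automorphism of $\Gamma_S$. But a graph automorphism of $\Gamma_S$ acts on $S$ as a permutation, so $\phi(S)$ equals a conjugate of $S$, and in particular $S\sim U$.

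The principal obstacle is bibliographic: assembling the precise automorphism-group computations for each of the three spherical families, and keeping track of how the inversion and any diagram symmetries interact with the reflection set. Beyond that, the argument is uniform across the three types, and the exclusion of $D_5$ reflects exactly the point at which this strategy hits a gap in the literature.
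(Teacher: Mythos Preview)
Your strategy is exactly the paper's, and for types $A_n$ and $D_n$ your sketch is complete: $\out{A}$ really is generated by the global inversion together with (possibly trivial) diagram symmetries, and the map $A\to\Z$ sending every generator to $1$ kills the inversion.

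The gap is in type $B_n$. Your claim that $\aut{A}$ is generated by inner automorphisms, graph automorphisms, and the global inversion is false there. By Charney--Crisp (and Paris--Soroko for $n\ge 5$), $\out{A}$ for $B_n$ has two further generators beyond the inversion $\iota$: a ``transvection'' $T$ sending $r_\ell\mapsto r_\ell\Delta_B$ for $\ell<n$ and $r_n\mapsto r_n\Delta_B^{-(n-1)}$, and an automorphism $\mu$ sending $r_\ell\mapsto r_\ell^{-1}$ for $\ell<n$ and $r_n\mapsto \delta r_n$ for a suitable $\delta$. Neither is a graph automorphism. So after writing $\psi\sim T^k\iota^j\mu^i$, you must still show $k=i=j=0$, and the single homomorphism $A\to\Z$ is not fine enough to do this: you need the full abelianisation $A\to\Z^2$ (generated by $\overline{r_1}$ and $\overline{r_n}$, since the label-$4$ edge prevents $r_{n-1}$ and $r_n$ from being conjugate). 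Requiring that $\psi$ act on $\Z^2$ by at most a coordinate permutation then forces $k=0$ and $i=j$ from the first coordinate, and $i=0$ from the second. This is exactly the extra computation the paper carries out; without it your $B_n$ case is incomplete.
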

\begin{proof}
    Write $W_S$ for the quotient Coxeter group, obtained as the quotient of $A$ with kernel $$K_S \coloneq  \nspan{s^2 \mid s \in S}=\langle gs^2g^{-1} \mid s\in S, g\in A\rangle.$$ The group $W_S$ is finite since $(A,S)$ is of spherical type. Suppose $U$ is an Artin generating set of $A$ with $R_S = R_U$, and define $W_U$ and $K_U$ as before. Since $R_S=R_U$, for every $u\in U$ there exist $s_u\in S$ and $h_u\in A$ such that $u=h_us_uh_u^{-1}$. Hence 
    $$K_U =\langle gu^2g^{-1} \mid u\in U, g\in A\rangle=\langle gs_u^2g^{-1} \mid u\in U, g\in A\rangle\le K_S.$$
    The same argument with $S$ and $U$ swapped shows that $K_S$ and $K_U$ are the same normal subgroup. Hence $W_S \cong W_U$, and in particular $W_U$ is finite, so that $(A,U)$ is also of spherical type by definition. By Paris' solution to the isomorphism problem within the class of spherical type Artin groups, it follows that $\Gamma_S \cong \Gamma_U$ \cite{Paris_isoprob_spherical}, so there is an automorphism $\psi: A \rightarrow A$ such that $\psi(S)  = U$.
    
     We will now show that $[\psi] \in \out{A}$ has a representative that simply permutes~$S$. Since such an automorphism fixes $S$ setwise, and replacing $S$ by a conjugate generating set is a twist equivalence, this will complete the proof.

     We now divide into cases based on the isomorphism type of $A$. Given $\phi_1, \phi_2 \in \aut{A}$, we will write $\phi_1 \sim \phi_2$ if $[\phi_1] = [\phi_2]$ in $\out{A}$. 

    \textbf{Case $A_n$:} In this case $\psi \sim \iota^i$ for $i \in \{0, 1\}$, where $\iota$ inverts each element of $S$ \cite{dyer1981automorphism}. Since $R_U = R_S$, it cannot be that $i = 1$, since then a generator in $S$ would be conjugate to the inverse of a generator in $S$, contradicting the existence of the map to $\mathbb{Z}$ sending each $s \in S$ to $1$.

    \textbf{Case $D_n$, $n \neq 5$:} In this case $\psi \sim \sigma\iota^i$, for $i \in \{0,1\}$, where $\iota$ is as before and $\sigma$ is a possibly trivial permutation of the generators \cite{soroko2021d4, castel2024endomorphisD}. Since $\sigma$ does not invert any generators, it must be that $i = 0$ for the same reason as in the previous case.

    \textbf{Case $B_n$:} This case is more complicated. Write $r_1, \dots, r_n$ for the standard generating set, where $m_{r_{n-1}r_n} = 4$, and set $\Delta_B = (r_1 \dots r_n)^n$ and $\delta = r_{n-1}\dots r_1 r_1 \dots r_{n-1}$. By inspecting the proof of \cite[Proposition~10]{charney2005automorphism} and the consequent remark, $\psi \sim T^k\iota^j\mu^i$, for $k \in \mathbb{Z}$ and $i, j \in \{0,1\}$, where $\iota$ is as above while $T$ and $\mu$ have the following forms (in each case $1 \leq \ell \leq n-1$):
    \[
    T\colon
    \begin{cases}
        r_\ell  \mapsto r_\ell\Delta_B\\
        r_n     \mapsto r_n\Delta_B^{-(n-1)}
    \end{cases}
    \text{ and }
    \mu\colon
    \begin{cases}
        r_\ell  \mapsto r_\ell^{-1}\\
        r_n     \mapsto \delta r_n
    \end{cases}
    \]
   (Via different methods, Paris and Soroko recovered the same result for the case $n\ge5$~\cite[Corollary 2.6]{paris2025endomorphisms}. The reader should notice that the generators extracted from the proof of \cite[Proposition~10]{charney2005automorphism} and those presented in the statement of \cite[Corollary 2.6]{paris2025endomorphisms} are the same, up to composition with inner automorphisms.)

    We consider the induced action of $\out{A}$ on the abelianisation, which is isomorphic to $\mathbb{Z}^2$ and generated by $\overline{r_1}$ (which is equal to $\overline{r_\ell}$ for $1 \leq \ell \leq n-1$) and $\overline{r_n}$. Since $R_S=R_U$, the automorphism $T^k\iota^j\mu^i$ has to preserve the generators setwise up to conjugacy, so the action on the abelianisation can be at most a permutation of coordinates. Now, by looking at the action on the $\overline{r_1}$ coordinate, since $|\overline{\Delta_B}| > 2$, we see that $k = 0$ and $i-j = 0$. Now, by looking at the action on the second coordinate, we see that $i = 0$, so $j = 0$, completing the proof. \end{proof}

We are finally ready to combine Theorem~\ref{thm:strong+ribbon=strong} with the result we gathered in this Section about the strong twist conjecture for several types of Artin groups:
\begin{cor}\label{cor:strong_twist_for_strong_chunks}
    Let $(A,S)$ be an Artin system, with $A$ one-ended. Suppose that, for every $X\subseteq S$ spanning a 1--chunk, $(A_X,X)$ is as in one of Lemmas~\ref{lem:strong_RAAG}-\ref{lem:strong_large}-\ref{lem:strong_spherical}. Then $(A,S)$ satisfies the strong twist conjecture.
\end{cor}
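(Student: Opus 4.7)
The plan is to apply Theorem~\ref{thm:strong+ribbon=strong} directly, which reduces the strong twist conjecture for $(A,S)$ to two hypotheses: that every big chunk parabolic $(A_X,X)$ satisfies the strong twist conjecture, and that every clique parabolic $(A_Y,Y)$ satisfies the vertex ribbon property. The first hypothesis is immediate from the assumption of the corollary, since the five listed classes are exactly those covered by Lemmas~\ref{lem:strong_large}, \ref{lem:strong_RAAG}, and \ref{lem:strong_spherical}.

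To verify the second hypothesis, I would first observe that any clique $Y \subseteq S$ is a connected induced subgraph of $\Gamma_S$ without separating vertex, hence is contained in some big chunk $X$. Then I would check case by case that the standard parabolic $(A_Y,Y)$, regarded as a sub-Artin system of $(A_X,X)$, is either of spherical or of large type, and invoke Godelle's Theorem~\ref{thm:properties_Godelle} to conclude. Specifically, if $X$ is right-angled then $A_Y \cong \mathbb{Z}^{|Y|}$ is spherical; if $X$ is large-type triangle-free then cliques in $X$ have at most two vertices, so $(A_Y,Y)$ is trivial, infinite cyclic, or a large-type dihedral group; if $X$ is large-type free-of-infinity or of type XXXL, then any induced subgraph of $\Gamma_X$ is still large-type; and if $X$ is of spherical type $A_n$, $B_n$, or $D_n$, then standard parabolic subsystems of spherical Artin systems are spherical.

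Once both hypotheses of Theorem~\ref{thm:strong+ribbon=strong} are established, the conclusion follows at once. There is no real obstacle; the proof is essentially bookkeeping, combining the main combination theorem with the existing results in the literature collected in Lemmas~\ref{lem:strong_large}--\ref{lem:strong_RAAG}--\ref{lem:strong_spherical} and with Godelle's work on the vertex ribbon property.
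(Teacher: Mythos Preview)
Your proposal is correct and follows essentially the same approach as the paper: apply Theorem~\ref{thm:strong+ribbon=strong}, using the three lemmas for the big-chunk hypothesis and Godelle's Theorem~\ref{thm:properties_Godelle} (after observing that every clique lies in a big chunk and inherits its type) for the vertex ribbon hypothesis. The only cosmetic difference is that the paper handles the right-angled clique case by noting directly that $\mathbb{Z}^{|Y|}$ trivially has the vertex ribbon property, whereas you subsume it under the spherical case; both are fine.
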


\begin{proof} Lemmas~\ref{lem:strong_RAAG}-\ref{lem:strong_large}-\ref{lem:strong_spherical} ensure that $(A_X,X)$ satisfies the strong twist conjecture whenever $X\subseteq S$ spans a 1--chunk. Now let $Y\subseteq S$ span a clique, which must be contained in some 1--chunk, say spanned by $X\subseteq S$. Notice that if $(A_X,X)$ is of large type (resp. of spherical type, right-angled) then so is $(A_Y,Y)$. Then $(A_Y,Y)$ enjoys the vertex ribbon property by Theorem \ref{thm:properties_Godelle}, and therefore Corollary~\ref{cor:strong_twist_for_strong_chunks} follows from Theorem~\ref{thm:strong+ribbon=strong}.
\end{proof}

\bibliography{biblio.bib}
\bibliographystyle{alpha}
\end{document}